\renewcommand{\le}{\leqslant}
\renewcommand{\ge}{\geqslant}
\newcommand{\matle}{\preceq}
\newcommand{\matge}{\succeq}
\newcommand{\nmatle}{\npreceq}
\newcommand{\nmatge}{\nsucceq}
\newcommand{\matls}{\prec}
\newcommand{\matgt}{\succ}
\newcommand{\id}{\mathbbmss 1}
\newcommand {\trsp}{^\mathsf {T}}
\newcommand {\matl}{\left[ \begin{matrix}}
\newcommand {\matr}{\end{matrix}\right]}
\newcommand {\Exp}{ \mathbb E }
\newcommand {\sg}{\operatorname{sgn}}
\renewcommand {\Pr}{ \mathbb P }
\newcommand{\mtx}[1]{\mathbf{#1}}
\newcommand{\vct}[1]{\mathbf{#1}}
\newcommand{\mX}{ \mtx{X} }
\newcommand{\mY}{ \mtx{Y} }
\newcommand{\mZ}{ \mtx{Z} }
\newcommand{\mV}{ \mtx{V} }
\newcommand{\mI}{ \mtx{I} }
\newcommand{\mM}{ \mtx{M} }
\newcommand{\mH}{ \mtx{H} }
\newcommand{\mU}{ \mtx{U} }
\newcommand{\mG}{ \mtx{G} }
\newcommand{\mSg}{ \mtx{\Sigma} }
\newcommand{\mA}{ \mtx{A} }
\newcommand{\mB}{ \mtx{B} }
\newcommand{\mC}{ \mtx{C} }
\newcommand{\mP}{ \mtx{P} }
\newcommand{\mE}{ \mtx{E} }
\newcommand{\mF}{ \mtx{F} }
\newcommand{\mL}{ \mtx{L} }
\newcommand{\mR}{ \mtx{R} }
\newcommand{\uzeroone}{\operatorname{Unif}_{(0,1)}}
\newcommand{\sdiv}{a}
\newcommand{\mdiv}{\mtx{A}}
\newcommand{\vv}{\vct{v}}
\newcommand{\vu}{\vct{u}}
\newcommand{\vx}{\vct{x}}
\newcommand {\Var}{\mathbf{Var}}
\newcommand {\Cov}{\mathbf{Cov}}
\newcommand{\e}{\mathrm e}
\newcommand{\cA}{\mathcal{A}}
\newcommand{\cX}{\mathcal{X}}
\newcommand{\cY}{\mathcal{Y}}
\newcommand{\cC}{\mathcal{C}}
\newcommand{\cF}{\mathcal{F}}
\newcommand{\cS}{\mathcal{S}}
\newcommand{\cG}{\mathcal{G}}
\newcommand{\cB}{\mathcal{B}}
\newcommand{\cN}{\mathcal{N}}
\newcommand{\cE}{\mathcal{E}}
\newcommand{\cM}{\mathcal{M}}
\newcommand{\cH}{\mathcal{H}}
\newcommand{\cP}{\mathcal{P}}
\newcommand{\iid}{\overset{\mathrm{iid}}{\sim}}
\newcommand{\psiE}{\psi_{\mathrm{E}}}
\DeclareMathAlphabet{\mathbbmsl}{U}{bbm}{m}{sl}
\newcommand{\tr}{\mathrm{tr}}
\newcommand{\abs}{\operatorname{abs}}
\newcommand{\tm}{{\textsc{Matrix}}}
\newcommand{\ts}{{\textsc{Scalar}}}
\newcommand{\emtm}{\emph{\tm}}
\newcommand{\emts}{\emph{\ts}}
\newcommand{\er}{\operatorname{r}}
\title{Positive Semidefinite Matrix Supermartingales\footnote{Accepted for publication in the \emph{Electronic Journal of Probability}}
}
\author[1]{Hongjian Wang}
\author[2]{Aaditya Ramdas}
\affil[1, 2]{Department of Statistics and Data Science, Carnegie Mellon University}
\affil[2]{Machine Learning Department, Carnegie Mellon University} 
\affil[ ]{\texttt{ \{hjnwang,aramdas\}@cmu.edu  }}
\date{\today}
\newtheorem{theorem}{Theorem}[section]
\newtheorem{definition}[theorem]{Definition}
\newtheorem{fact}{Fact}
\newtheorem{proposition}[theorem]{Proposition}
\newtheorem{hypothesis}[theorem]{Conjecture}
\newtheorem{corollary}[theorem]{Corollary}
\newtheorem{lemma}[theorem]{Lemma}
\theoremstyle{definition}\newtheorem{remark}[theorem]{Remark}
\newtheorem{example}[theorem]{Example}
\Crefname{fact}{Fact}{Facts}
\begin{document}

\maketitle

\begin{abstract}
    We explore the asymptotic convergence and nonasymptotic maximal inequalities of supermartingales and backward submartingales in the space of positive semidefinite matrices. These are natural matrix analogs of scalar nonnegative supermartingales and backward nonnegative submartingales, whose convergence and maximal inequalities are the theoretical foundations for a wide and ever-growing body of results in statistics, econometrics, and theoretical computer science.

Our results lead to new concentration inequalities for either martingale dependent or exchangeable random symmetric matrices under a variety of tail conditions, encompassing now-standard Chernoff bounds to self-normalized heavy-tailed settings. Further, these inequalities are usually expressed in the Loewner order, are sometimes valid simultaneously for all sample sizes or at an arbitrary data-dependent stopping time, and can often be tightened via an external randomization factor.
    
\end{abstract}

\section{Introduction}

Let $\mY_0, \mY_1,\dots$ be a sequence of random symmetric $d\times d$ matrices adapted to a filtration $\{\cF_n
\}_{n\ge 0 }$. The process $\{ 
\mY_n \}_{n\ge 0 }$ is called a \emph{(matrix-valued) supermartingale} if
\begin{equation}\label{eqn:mat-sm}
    \text{for all }n \ge 1, \quad \Exp(\mY_n | \cF_{n-1}) \matle \mY_{n-1}. 
\end{equation}
Here, crucially, the order $\matle$ is the \emph{Loewner order}, where $\mA \matle \mB$ if and only if $\mB - \mA$ is a positive semidefinite matrix. This introduces the rich spectral theory of matrices to the familiar $d=1$ scalar special case as well as the matrix martingale special case where the ``$\matle$'' in \eqref{eqn:mat-sm} takes the entry-wise ``$=$''.

Similarly, if the matrices $\{ \mY_n \}_{n \ge 0}$ are adapted instead to a \emph{backward} filtration $\{\cG_n
\}_{n \ge 0}$, that is, $\cG_0 \supseteq \cG_1 \supseteq \dots$, it is called a \emph{(matrix-valued) backward submartingale} if
\begin{equation}\label{eqn:mat-bsm}
    \text{for all }n \ge 1, \quad \Exp(\mY_{n-1} | \cG_{n}) \matge \mY_{n}. 
\end{equation}
In this paper, we primarily study matrix-valued supermartingales and backward submartingales that are almost surely \emph{positive semidefinite} (PSD). These matrix-valued stochastic processes are natural and non-trivial generalizations of scalar nonnegative supermartingales and nonnegative backward submartingales, studied extensively by the probability and statistics literature. In particular, as we shall review soon in \cref{sec:scalars}, a scalar nonnegative supermartingale or backward submartingale $\{ Y_n \}$ satisfies \emph{Ville's inequality}
\begin{equation}
   \Pr( \exists n, \ Y_n \ge \sdiv ) \le (\Exp Y_0) \sdiv ^{-1},
\end{equation}
which is interconnected with various results on the asymptotic and non-asymptotic behavior of these classes of processes, and has led to numerous methods in sequential statistics. We extend all these results and applications from scalars to matrices. In particular, we present the matrix Ville's inequality
\begin{equation}
        \Pr( \exists n, \; \mY_n \nmatle \mA) \le \tr( (\Exp \mY_0) \mA^{-1} )
\end{equation}
for both classes of processes, their numerous equivalent or strengthened forms, and a variety of statistical applications.

A list of major contributions of this paper is as follows.
\begin{enumerate}
    \item The convergence and optional stopping theorems, and the Ville's inequality for PSD matrix supermartingales  (Theorems~\ref{thm:matdoobconv}, \ref{thm:matrix-optional-stopping}, and~\ref{thm:matvil1}) that generalize the corresponding scalar results.
    \item Sequential testing procedure with a matrix test supermartingale (\cref{alg:seqtest}) that can be applied to the meaning testing of bounded (Examples~\ref{ex:bet} and~\ref{ex:emp-bern-ms}), symmetric (Example~\ref{ex:sym}), or heavy-tailed (Example~\ref{ex:sn} and~\ref{ex:cat}) martingale-dependent matrices.
    \item Matrix e-values and e-processes (Definitions~\ref{def:mat-ev} and~\ref{def:mat-epr}), generalizing the corresponding scalar concepts that have recently been extensively studied.
    \item The matrix Ville's inequality for backward PSD matrix submartingales (Theorem~\ref{thm:matbwvil}), leading to the exchangeable matrix Chebyshev's inequality (Theorem~\ref{thm:xmci}). 
    \item The randomized matrix Markov's and Ville's inequalities (Theorems~\ref{thm:ur-matmarkov-meta} and~\ref{thm:matvil1-u}).
    \item A sequential mean testing example with the matrix supermartingale outperforming the scalar supermartingale (Example~\ref{ex:noncommute}).
\end{enumerate}

The rest of the paper is organized as follows. We review in \cref{sec:prelim} the technical background of matrix analysis, sequential statistics with scalar processes in the abovementioned classes, as well as matrix concentration inequalities in the literature. We study forward PSD matrix supermartingales, their Ville's inequality and applications in \cref{sec:mtg}. We then study backward PSD matrix submartingales, their Ville's inequality and applications in \cref{sec:bw}. In \cref{sec:dis} we provide several perspectives comparing our inherent matrix approach to some existing scalar-born or scalarized results, and discuss a randomized improvement that is further developed in \cref{sec:ummi-full}. An interesting empirical study is presented in \cref{sec:exp}, including a problem where our matrix method yields higher out-of-the-box power compared to the previous scalar method. Omitted proofs are found in \cref{sec:pf}; some further references and independent theoretical results in \cref{sec:misc}.

\section{Preliminaries and Related Work}\label{sec:prelim}

\subsection{Matrix Analysis}

Let $\cS_d$ denote the set of all $d\times d$ real-valued symmetric matrices, which is the only class of matrices considered in this paper.\footnote{We only briefly discuss the case with non-symmetric and non-square matrices in \cref{sec:dilation}.} These matrices are denoted by bold upper-case letters $\mA, \mB$, etc; whereas $d$ dimensional (column) vectors by bold lower-case letters $\vu, \vv$, etc. For $I \subseteq \mathbb R$, we denote by $\cS_d^I$ the set of all real symmetric matrices whose eigenvalues are all in $I$.  $\cS_d^{ [0,\infty) }$, the set of positive semidefinite and $\cS_d^{ (0,\infty) }$, the set of positive definite matrices are simply denoted by $\cS_d^{+}$ and $\cS_d^{++}$ respectively. We consider random matrices taking values in $\cS_d$. It is easy to see that if $\mX$ is an integrable random matrix taking values in $\cS_d^I$ where $I$ is an interval, any conditional expectation $\Exp(\mX | \cA)$ also takes values in $\cS_d^I$.
The Loewner partial order $\mA \matle \mB$ means $\mB - \mA$ is positive semidefinite, and $\mA \matls \mB$ means $\mB - \mA$ is positive definite.

Any function $f: I \to J$ (where $I, J \subseteq \mathbb R$) is identified canonically with a function $f: \cS_d^I \to \cS_d^J$ in the following way: for a diagonal matrix $\mSg \in \cS_d^I$, $f(\mSg)$ is obtained by applying $f$ entrywise; otherwise it is obtained by applying $f$ after orthogonal diagonalization, $f(\mU \trsp \mSg \mU) = \mU\trsp f(\mSg) \mU$. Notable matrix-to-matrix operators that recur in this paper defined this way include the matrix exponential $\exp: \cS_d \to \cS^{++}_d$, the matrix logarithm $\log: \cS^{++}_d \to \cS_d $,  the matrix absolute value\footnote{We use $\abs(\, \cdot \,)$ instead of $| \cdot |$ to distinguish it from the determinant and other norms.} $\abs : \cS_d \to \cS^+_d$, and powers $\mX \mapsto \mX^k$ for any $k \in \mathbb R$. 

Important functionals $\cS_d \to \mathbb R$ we use throughout the paper include $\lambda_{\max}$, the largest eigenvalue of a matrix; $\| \cdot \|$, the spectral norm of a matrix which equals $\lambda_{\max} \circ \abs$; and $\tr$, the trace. Note that $\tr$ preserves the orders (viz., $\mA \matle \mB$ implies $\tr \mA \le \tr \mB$) and is lower bounded by $\lambda_{\max} = \| \cdot \|$ on $\cS_d^{+}$.

It is rare to find matrix-to-matrix functions that are monotone (i.e.\ preserving the $\matle$ order) and convex (i.e.\ $f$ such that $f(\sum \lambda_i \mX_i) \matle \sum {\lambda_i} f(\mX_i)$ if $\sum \lambda_i = 1$ and each $\lambda_i \ge 0$; concavity is defined similarly). Scalar-to-scalar monotonicity ($I \to J$)  does \emph{not} imply matrix-to-matrix monotonicity  ($\cS_d^I \to \cS_d^J$), and the same with convexity. For example, the matrix square $\mX \mapsto \mX^2$ is not monotone on the domain $\cS_d^+$ and the matrix 4\textsuperscript{th} power $\mX \mapsto \mX^4$ is neither monotone nor convex. The following are some of the few well-known nontrivial ``operator monotone" and ``operator convex" functions in the matrix analysis literature (cf. \citet[Chapter V]{bhatia2013matrix}). 
\begin{fact}[Operator Monotone and Convex Functions]\label{fct:opmonofuns}
   The matrix logarithm $\mX \mapsto \log \mX$ ($\cS_d^{++} \to \cS_d$) is monotone and concave. For $k \in (0,1)$, the matrix power $\mX \mapsto \mX^k$ ($\cS_d^+ \to \cS_d^+$) is monotone and concave. For $k \in (1,2]$, the matrix power $\mX \mapsto \mX^k$ ($\cS_d^+ \to \cS_d^+$ if $k \neq 2$; $\cS_d \to \cS_d^+$ if $k = 2$) is convex.
\end{fact}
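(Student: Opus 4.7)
The plan is to reduce all three statements to a single foundational lemma---the operator anti-monotonicity and operator convexity of matrix inversion on $\cS_d^{++}$---combined with integral representations of $\log$ and of fractional powers.

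First I would establish the lemma: for $\mA, \mB \matgt 0$, (i) $\mA \matle \mB$ implies $\mB^{-1} \matle \mA^{-1}$, and (ii) $\bigl(\tfrac{\mA+\mB}{2}\bigr)^{-1} \matle \tfrac{\mA^{-1}+\mB^{-1}}{2}$. Part (i) follows by conjugating $\mA \matle \mB$ with $\mA^{-1/2}$ to reduce to the obvious implication $\mI \matle \mC \implies \mC^{-1} \matle \mI$, a direct consequence of the spectral theorem. For (ii), I would substitute $\mC := \mA^{-1/2} \mB \mA^{-1/2}$ and conjugate both sides by $\mA^{1/2}$ to reduce to the single-matrix inequality $4(\mI + \mC)^{-1} \matle \mI + \mC^{-1}$; diagonalizing $\mC$, this amounts to $\tfrac{4}{1+\lambda} \le 1 + \tfrac{1}{\lambda}$ for each $\lambda > 0$, which rearranges to $(\lambda - 1)^2 \ge 0$.

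Granted the lemma, for every $t > 0$ the map $\mX \mapsto t(\mX + t\mI)^{-1}$ is anti-monotone and convex, so $\mX \mapsto \mX(\mX + t\mI)^{-1} = \mI - t(\mX + t\mI)^{-1}$ is monotone and concave. I would then invoke the scalar integral identities
$$\log x = \int_0^\infty \left( \frac{1}{1+t} - \frac{1}{x+t} \right) \d t \quad \text{and} \quad x^k = \frac{\sin(k\pi)}{\pi} \int_0^\infty \frac{x}{x+t} \, t^{k-1} \d t \ \ (k \in (0,1)),$$
which lift to matrix identities by applying functional calculus eigenvalue-by-eigenvalue. Since both monotonicity and concavity in $\matle$ are preserved under integration against positive measures, $\log \mX$ and $\mX^k$ for $k \in (0,1)$ inherit operator monotonicity and operator concavity.

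For $k \in (1,2]$, the endpoint $k = 2$ follows by direct calculation: $\tfrac{\mA^2 + \mB^2}{2} - \bigl(\tfrac{\mA + \mB}{2}\bigr)^2 = \tfrac{(\mA - \mB)^2}{4} \matge 0$. For $k \in (1,2)$ I would use the analogous representation
$$x^k = \frac{\sin((k-1)\pi)}{\pi} \int_0^\infty \frac{x^2}{x+t} \, t^{k-2} \d t,$$
together with the algebraic identity $\frac{x^2}{x+t} = x - t + t^2 (x+t)^{-1}$, which exhibits the integrand as an affine term plus a nonnegative multiple of the operator convex function $(x+t)^{-1}$; hence the integrand is operator convex in $\mX$, and integration against the positive measure $t^{k-2} \d t$ preserves convexity.

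The principal obstacle is the foundational lemma, specifically the operator convexity of inversion (the subtler half). Once it is in hand, everything else is a matter of choosing the right integral representation and invoking linearity together with positivity of the integral.
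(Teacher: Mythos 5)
The paper does not actually prove this statement---it is quoted as a known fact with a citation to \citet[Chapter V]{bhatia2013matrix}---and your argument is precisely the standard proof found there: reduce everything to the anti-monotonicity and convexity of the inverse map, then integrate the L\"owner-type representations of $\log x$ and of the fractional powers $x^k$, handling $k=2$ by the direct square computation. Your reasoning is correct; the only cosmetic gaps are that your lemma yields midpoint convexity/concavity, so you should invoke continuity to obtain the paper's definition with arbitrary convex weights, and that the statements on all of $\cS_d^{+}$ (rather than $\cS_d^{++}$) require a similar limiting step at the boundary.
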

With trace, however, real-to-real functions translate into matrix-to-real functions with monotonicity and convexity intact. That is,
\begin{fact}[Trace Monotonicity]\label{fct:tracemono}
    Let $f:I \to \mathbb R$ be monotone increasing ($I \subseteq \mathbb R$). Then so is $\tr \circ f : \cS_d^I \to \mathbb R$, i.e.,
\begin{equation}
\mA \matle \mB \implies \tr f(\mA) \le \tr f(\mB).
\end{equation}
\end{fact}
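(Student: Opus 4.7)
The plan is to reduce the matrix-to-real inequality to a coordinatewise comparison of eigenvalues, so that the scalar monotonicity of $f$ can be applied termwise. The first step is to observe that, by the spectral theorem together with the cyclicity of the trace, for any $\mC \in \cS_d^I$ with eigenvalues $\lambda_1(\mC) \ge \dots \ge \lambda_d(\mC)$ one has $\tr f(\mC) = \sum_{i=1}^d f(\lambda_i(\mC))$, since $f(\mC) = \mU\trsp f(\mSg) \mU$ with $\mSg = \operatorname{diag}(\lambda_1(\mC),\dots,\lambda_d(\mC))$ and $f(\mSg)$ diagonal with entries $f(\lambda_i(\mC))$.

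Next, I would invoke the monotonicity of eigenvalues under the Loewner order (Weyl's monotonicity theorem), namely that $\mA \matle \mB$ implies $\lambda_i(\mA) \le \lambda_i(\mB)$ for every $i = 1, \dots, d$, when eigenvalues of both matrices are arranged in decreasing order. This is the crux of the proof and follows from the Courant--Fischer min--max characterization
\[
\lambda_i(\mC) = \max_{\substack{V \subseteq \reals^d \\ \dim V = i}} \min_{\substack{\vv \in V \\ \|\vv\| = 1}} \vv\trsp \mC \vv,
\]
because $\mA \matle \mB$ means $\vv\trsp \mA \vv \le \vv\trsp \mB \vv$ for every unit vector $\vv$, so every inner min--max for $\mA$ is dominated by the same min--max for $\mB$.

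Combining the two steps, since $f$ is monotone increasing and $\lambda_i(\mA) \le \lambda_i(\mB)$ for each $i$, one has $f(\lambda_i(\mA)) \le f(\lambda_i(\mB))$, and summing yields $\tr f(\mA) \le \tr f(\mB)$. The only substantive step is Weyl's monotonicity; everything else is bookkeeping, so I do not anticipate a real obstacle, but I would make sure to note that $\mA \matle \mB$ combined with $\mA \in \cS_d^I$ does not automatically place $\mB$ in $\cS_d^I$ unless $I$ is a half-line; in the typical applications ($I = [0,\infty)$ or $I = \reals$) this is not an issue, and otherwise one should read the statement as asserting the inequality whenever both $\tr f(\mA)$ and $\tr f(\mB)$ are defined.
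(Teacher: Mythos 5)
Your proof is correct. The paper states this as a background fact without giving a proof (it is standard matrix-analysis material, in the spirit of the references to \citet{bhatia2013matrix}), and your argument --- writing $\tr f(\mC)=\sum_{i=1}^d f(\lambda_i(\mC))$ via the spectral decomposition and then invoking Weyl's eigenvalue monotonicity through the Courant--Fischer characterization --- is exactly the standard route one would cite for it. Your closing caveat about the domain is also a sensible reading: the statement quantifies over $\mA,\mB\in\cS_d^I$, so both traces are defined by hypothesis and no further assumption on $I$ is needed.
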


\begin{fact}[Trace Jensen]\label{fct:tracejen} For an interval $I \subseteq \mathbb R$, assume $f: I \to \mathbb R$ is convex. Then so is $\tr \circ f : \cS_d^I \to \mathbb R$, i.e.,
\begin{equation}
    \tr f\left(\sum \lambda_i \mX_i \right) \le \tr \left( \sum \lambda_i f(\mX_i) \right),
\end{equation}
    if $\sum \lambda_i = 1$ and each $\lambda_i \ge 0$.
\end{fact}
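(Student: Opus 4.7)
The plan is to derive the inequality via two successive uses of scalar Jensen, bridged by the \emph{Peierls inequality}. First, observe that since each $\mX_i \in \cS_d^I$, $\sum_i \lambda_i = 1$, and $\lambda_i \ge 0$, the convex combination $\mA := \sum_i \lambda_i \mX_i$ also lies in $\cS_d^I$ (for any closed interval $[a,b]$ containing the eigenvalues, $a \mI \matle \mX_i \matle b \mI$ transfers linearly to $\mA$), so $f(\mA)$ is well defined.

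The auxiliary step I would prove first is: for any $\mY \in \cS_d^I$ and any orthonormal basis $\{u_k\}_{k=1}^d$ of $\mathbb R^d$,
\begin{equation}
\sum_{k=1}^d f(\langle u_k, \mY u_k \rangle) \le \tr f(\mY).
\end{equation}
To see this, diagonalize $\mY = \sum_j \nu_j v_j v_j\trsp$ with $\{v_j\}$ orthonormal and set $p_{kj} := |\langle u_k, v_j\rangle|^2$. The array $(p_{kj})$ is doubly stochastic (rows and columns sum to $1$ by orthonormality of $\{u_k\}$ and $\{v_j\}$ respectively), and $\langle u_k, \mY u_k \rangle = \sum_j p_{kj} \nu_j$ is a scalar convex combination of values in $I$. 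Scalar Jensen yields $f(\langle u_k, \mY u_k \rangle) \le \sum_j p_{kj} f(\nu_j)$; summing over $k$ and using $\sum_k p_{kj} = 1$ gives the claim, since $\tr f(\mY) = \sum_j f(\nu_j)$.

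With this in hand, let $\{u_k\}$ be an orthonormal eigenbasis of $\mA$ with eigenvalues $\mu_k \in I$. Then
\begin{align}
\tr f(\mA) &= \sum_k f(\mu_k) = \sum_k f\left( \sum_i \lambda_i \langle u_k, \mX_i u_k \rangle \right) \\
&\le \sum_i \lambda_i \sum_k f(\langle u_k, \mX_i u_k \rangle) \le \sum_i \lambda_i \tr f(\mX_i),
\end{align}
where the first inequality is scalar Jensen applied to each $k$ (followed by swapping the order of summation), and the second is the auxiliary Peierls inequality applied to each $\mX_i$ using the eigenbasis $\{u_k\}$ of $\mA$ (which is in general not an eigenbasis of $\mX_i$).

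The main obstacle is recognizing the necessity of the Peierls step: the eigenbasis of $\mA$ typically does not diagonalize the individual $\mX_i$, so one cannot reduce $\sum_k f(\langle u_k, \mX_i u_k \rangle)$ to $\tr f(\mX_i)$ by a trivial spectral identity. The doubly-stochastic observation — that $|\langle u_k, v_j \rangle|^2$ forms a bistochastic matrix regardless of how the two orthonormal bases are related — is the pivotal ingredient that converts the off-diagonal coupling into a scalar convex combination, neatly closing the gap.
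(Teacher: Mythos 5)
Your proof is correct. The paper does not prove this statement at all: it is listed as a background \emph{Fact} imported from the matrix-analysis literature (alongside trace monotonicity and the operator-monotonicity facts), so there is no in-paper argument to compare against. Your route --- scalar Jensen applied in an eigenbasis $\{u_k\}$ of $\mA=\sum_i\lambda_i\mX_i$, followed by the Peierls inequality $\sum_k f(\langle u_k,\mY u_k\rangle)\le \tr f(\mY)$, itself proved via the doubly stochastic array $p_{kj}=|\langle u_k,v_j\rangle|^2$ --- is the standard textbook proof of trace convexity, and you handle the needed domain checks correctly: $\mA\in\cS_d^I$ because its eigenvalues are pinched between $a\mI$ and $b\mI$ for $[a,b]\subseteq I$ containing all eigenvalues of the finitely many $\mX_i$, and each $\langle u_k,\mX_i u_k\rangle\in I$ because it is a convex combination $\sum_j p_{kj}\nu_j$ of eigenvalues of $\mX_i$. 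Nothing is missing.
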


\subsection{Scalar Nonnegative Supermartingales and Statistical Applications}\label{sec:scalars}

A scalar-valued nonnegative supermartingale $\{ Y_n \}_{n \ge 0}$ on the filtration $\{ \cF_n \}_{n \ge 0}$ satisfies Ville's inequality \citep{ville1939etude},
\begin{equation}\tag{VI}\label{eqn:vi}
    \Pr( \exists n, \ Y_n \ge \sdiv ) \le (\Exp Y_0) \sdiv ^{-1},
\end{equation}
or equivalently,
\begin{equation}\tag{VI}\label{eqn:vi-stop}
    \Pr( Y_\tau \ge \sdiv  ) \le (\Exp Y_0) \sdiv^{-1}\quad \text{for any stopping time }\tau.
\end{equation}
The equivalence is demonstrated by, for example, \citet[Lemma 3]{howard2021time}.
Ville's inequality is often seen as a time-uniform, or a ``stopped'' generalization of
Markov's inequality 
\begin{equation}\label{eqn:mi}\tag{MI}
    \Pr( X \ge \sdiv  ) \le (\Exp X) \sdiv ^{-1}
\end{equation}
for a nonnegative random variable $X$.

As Markov's inequality lays the foundation for a large body of concentration inequalities for a fixed number of random variables leading up to non-asymptotic statistical methods (e.g.\ confidence intervals) valid at fixed finite sample sizes,
 Ville's inequality \eqref{eqn:vi} has been the workhorse for many recent time-uniform concentration inequalities and accordingly \emph{sequential} statistical methods which are valid at data-dependent sample sizes. {These methods enable conducting hypothesis testing and estimation with uncertainty quantification without being committed to an \emph{a priori} fixed sample size, allowing the experimenter to stop the experiment for any (possibly data-dependent) reason --- formally, at a \emph{stopping time} $\tau$ on the filtration $\cF_n = \sigma(X_1,\dots,X_n)$ generated by the data sequence $X_1,X_2,\dots$.
 \begin{itemize}
     \item  A $(1-\alpha)$-confidence \emph{sequence} for the parameter $\theta$ is a sequence of sets $\{ C_n \}_{n \ge 1}$ (each $C_n$ computed from $X_1,\dots,X_n$) such that $\Pr( \theta \in C_n \text{ for all }n  ) \ge 1-\alpha$, or equivalently $\Pr( \theta \in C_\tau  ) \ge 1-\alpha$ for any stopping time $\tau$. They are usually derived via equation \eqref{eqn:vi} \citep{howard2021time,waudby2020estimating,waudby2020confidence,wang2023catoni,wang2023huber}.
     \item When testing the null hypothesis that the true data-generating distribution belongs to the set $\cH_0$, a \emph{test supermartingale} for $\cH_0$ is a process $\{ Y_n \}$ (each $Y_n$ computed from $X_1,\dots,X_n$) with $Y_0 = 1$ that is a nonnegative supermartingale, i.e.\ $\Exp_{P}(Y_n|X_1,\dots, X_{n-1})\le Y_{n-1}$, under \emph{any} distribution $P \in \cH_0$. One rejects $\cH_0$ when the observed $Y_n$ first surpasses $1/\alpha$. This ensures, due to \eqref{eqn:vi-stop}, that the type-I error rate of \emph{ever} falsely rejecting $\cH_0$ is at most $\alpha$.
 \end{itemize} 
 These \eqref{eqn:vi}-driven sequential methods are widely applied in real-world cases where it is much more economical (see e.g.\ the argument made by \cite{woong2023design}) to leave the sample size unspecified and allow flexible sequential experiments.
See the recent review article by \citet[Sections 4 and 5]{ramdas2022game} for numerous examples. 
 We revisit this topic with a more involved discussion on \emph{test processes} in \cref{sec:mat-e-proc}.}

 Numerous variations and extensions on \eqref{eqn:vi} exist. First, the backward Ville's inequality (see e.g.\ \citet[Theorem 2]{manole2023martingale}). A process $\{ Y_n \}$ adapted to a backward filtration $\{ \cG_n \}$ is called a backward \emph{sub}martingale if $\Exp( Y_{n-1}|\cG_n) \ge Y_n$ which, if nonnegative, satisfies \eqref{eqn:vi} as well,
 \begin{equation}\label{eqn:bvi}
      \Pr( \exists n, \ Y_n \ge \sdiv ) \le (\Exp Y_0) \sdiv ^{-1}.
 \end{equation}
 Second, the uniformly randomized Ville's inequality \citep[Theorem 4.1]{ramdas2023randomized}, which states that \eqref{eqn:vi-stop} can be tightened into
 \begin{equation}\label{eqn:uvi}
    \Pr( Y_\tau \ge U \sdiv  ) \le (\Exp Y_0) \sdiv^{-1}\quad \text{for any stopping time }\tau,
\end{equation}
where $U \sim \operatorname{Unif}_{(0,1)}$ is independent from the filtration $\{\cF_n\}$. \citet[Section 4.3]{ramdas2023randomized} outline its implication for sequential statistics, which we discuss in \cref{sec:randfix} in the context of matrices.



\subsection{Matrix Concentration Inequalities}
\subsubsection{Spectral Bounds}\label{sec:spectral-bounds}

There has been a wealth of results bounding the tails of the largest eigenvalue (or the spectral norm) of the sum of independent or martingale dependent random symmetric matrices. Most notably, \cite{oliveira2009concentration,oliveira2010sums}, \cite{tropp2011user,tropp2012user,tropp2015introduction}, and their collaborators achieve the technically involved extension of the Cram\'er-Chernoff method, the central tool in deriving exponentially decaying scalar concentration inequalities, to random matrices satisfying similar tail conditions. The most prominent step along this route is the application of Lieb's concavity theorem \citep{lieb1973convex} that ends up bounding the expected value of the \emph{scalar}
\begin{equation}\label{eqn:traceproc}
   \tr \exp\left( \sum \mX_i \right)
\end{equation}
where $\mX_i$'s are independent \citep[Section 3.4]{tropp2012user} or martingale dependent \citep[Section 2.2]{tropp2011user} matrices. Fixed-time tail bounds follow from Markov's inequality \eqref{eqn:mi}. Already noticed by \cite{tropp2011user} and further exploited by \cite{howard2020time}, the trace process \eqref{eqn:traceproc} is a (scalar) nonnegative supermartingale, and Ville's inequality \eqref{eqn:vi} thus implies time-uniform spectral tail bounds for the same classes of random matrices.

Our new results will take a different approach as we shall construct matrix-valued supermartingales and Loewner-ordered ($\matle$) concentration inequalities. Their fixed-time counterparts are reviewed next in \cref{sec:aw}. Our results can often recover these ``scalarized'' results of \citeauthor{oliveira2009concentration,tropp2011user}, and \citeauthor{howard2020time} mentioned above.

\subsubsection{Ahlswede-Winter Bounds}\label{sec:aw}
\citet[Theorem 12]{ahlswede2002strong} prove the following Loewner-ordered matrix Markov's inequality
\begin{equation}\label{eqn:mmi}\tag{MMI}
         \Pr( \mX  \nmatle  \mdiv) \le \tr( (\Exp \mX) \mdiv^{-1}),
\end{equation}
where $\mX$ is an $\cS_d^+$-valued random matrix and $\mA \in \cS_d^{++}$ is deterministic. 
This leads to the direct corollary of a matrix Chebyshev inequality that serves as a weak law of large numbers for the average $\overline{\mX}_n$ of $n$ i.i.d.\ symmetric matrices with mean $\mM$ and (matrix) variance $\mV =  \Exp (\mX - \mM)^2$ \citep[Corollary 16]{ahlswede2002strong},
\begin{equation}\label{eqn:mci}
     \Pr( \abs(\overline{\mX}_n - \mM) \nmatle \mdiv ) \le n^{-1} \tr( \mV \mdiv^{-2} ).
\end{equation}
\cite{ahlswede2002strong} also attempt to use this ``un-scalarized'' method to generalize the Cram\'er-Chernoff method for scalars. However, as summarized by \cite[Section 3.7]{tropp2012user}, this leads to suboptimal bounds that involve ``the sum of eigenvalues'' as opposed to ``the eigenvalue of a sum'' which the sharper bounds (obtained by ``scalarizing'' with Lieb's concavity theorem as mentioned in \cref{sec:spectral-bounds}) involve.

Our time-uniform, martingale-based main results are often obtained via \eqref{eqn:mmi}. In particular, we show that this Loewner-ordered approach pioneered by \cite{ahlswede2002strong} does lead to sharp matrix concentration inequalities and tests, fixed-time and time-uniform alike, comparable to and sometimes better than the results cited in \cref{sec:spectral-bounds}. We provide a self-contained proof of \eqref{eqn:mmi} in \cref{sec:ummi-full} that also comes with the extension of randomization.

\section{Forward PSD Supermartingales}\label{sec:mtg}

We consider in this section a probability space with a usual forward filtration $\{\cF_n \}_{n \ge 0}$. That is, $\cF_0 \subseteq \cF_1 \subseteq \dots \subseteq \cF_{\infty}$. Within the scope of this section, we shall often omit the filtration $\{ \cF_n \}$ when unambiguous from the context, calling a scalar- or matrix-valued process ``adapted'' if the $n$-indexed element is $\cF_n$-measurable; ``predictable'' if  $\cF_{n-1}$-measurable. Stopping times are all with respect to $\{\cF_n\}$.


\subsection{Convergence, Optional Stopping, and Maximal Inequalities}

Recall our definition of a matrix-valued supermartingale from the beginning of the paper that
an adapted $\cS_d$-valued stochastic process  $\{ \mY_n \}$
is a supermartingale if $\Exp (\mY_n | \cF_{n-1} ) \matle \mY_{n-1}$ holds for every $n$. Before we proceed, note that the order $\matle$ is preserved by addition, limit, and Lebesgue integration (hence including convergent infinite sums, expectations, conditional expectations, etc., essentially because $\cS^{+} = \{ \mA : \mA \matge 0 \}$ is closed both topologically and under $+$). See also a ``dominated integrability" lemma we prove in \cref{sec:domint}.

Matrix supermartingales are related to scalar supermartingales in the following ways. First, the monotonicity of trace implies that if $\{ \mY_n \}$ is an $\cS_d$-valued supermartingale, its trace process $\{ \tr (\mY_n) \}$ is a real-valued supermartingale. Second, we have the following lemma {($\mathbb Q$ being the set of rational numbers)}. 

\begin{lemma}\label{lem:nsmiff}
    (1) An $\cS_d$-valued adapted process $\{ \mY_n \}$ is a supermartingale if and only if for every non-random vector $\vv \in \mathbb R^d$ {or $\vv \in \mathbb Q^d$}, the scalar-valued process $\{ \vv\trsp \mY_n \vv \}$ is a supermartingale.  {(2) Similar to the scalar case, an $\cS_d$-valued integrable process $\{\mY_n\}$ is a supermartingale if and only if the predictable component of its Doob decomposition $\mA_n = \sum_{i=1}^n  (\Exp(\mY_i|\cF_{i-1}) - \mY_{i-1})$ is decreasing in the $\matle$ order.}
\end{lemma}
The
proof of Lemma~\ref{lem:nsmiff} can be found in \cref{sec:pf-tp}. It leads to an analog of Doob's martingale convergence theorem for PSD matrix supermartingales.

\begin{theorem}[Matrix Supermartingale Convergence Theorem]\label{thm:matdoobconv}
    Any $\cS_d^+$-valued supermartingale $\{ \mY_n \}$ converges almost surely to an $\cS_d^+$-valued random matrix $\mY_\infty$ and $\Exp \mY_\infty \matle \Exp \mY_0$.
\end{theorem}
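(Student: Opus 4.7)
The plan is to reduce the matrix-valued convergence claim to the classical scalar Doob supermartingale convergence theorem via the quadratic-form characterization just established. First, for every deterministic $\vv \in \reals^d$, the preceding lemma tells us that $\{ \vv\trsp \mY_n \vv \}$ is a supermartingale, and since $\mY_n \matge 0$ it is in addition nonnegative, hence $L^1$-bounded (because $0 \le \Exp \vv\trsp \mY_n \vv \le \Exp \vv\trsp \mY_0 \vv < \infty$). Scalar Doob then yields an almost surely finite limit $L_\vv := \lim_n \vv\trsp \mY_n \vv$, integrable with $\Exp L_\vv \le \Exp \vv\trsp \mY_0 \vv$.

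Next, I would apply this to the finite collection of test vectors $\vct{e}_1, \dots, \vct{e}_d$ together with $\vct{e}_i + \vct{e}_j$ for $1 \le i < j \le d$, where $\{\vct{e}_i\}$ is the standard basis of $\reals^d$. Intersecting these finitely many full-probability events yields an almost-sure event on which, by polarization and the symmetry of $\mY_n$,
\begin{equation*}
(\mY_n)_{ii} = \vct{e}_i\trsp \mY_n \vct{e}_i, \qquad (\mY_n)_{ij} = \tfrac{1}{2}\bigl[(\vct{e}_i + \vct{e}_j)\trsp \mY_n (\vct{e}_i + \vct{e}_j) - \vct{e}_i\trsp \mY_n \vct{e}_i - \vct{e}_j\trsp \mY_n \vct{e}_j\bigr]
\end{equation*}
converges for each $i,j$. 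Defining $\mY_\infty$ to be the entrywise limit on this event (and, say, zero off it), we obtain $\mY_n \to \mY_\infty$ almost surely in every matrix norm. Membership in $\cS_d^+$ passes to the limit because $\cS_d^+$ is closed, being the intersection over $\vv$ of the closed half-spaces $\{\mA \in \cS_d : \vv\trsp \mA \vv \ge 0\}$; symmetry is similarly preserved.

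For the expectation inequality $\Exp \mY_\infty \matle \Exp \mY_0$, I would again argue vector-by-vector. Fix $\vv \in \reals^d$; Fatou's lemma applied to the nonnegative sequence $\vv\trsp \mY_n \vv$ gives
\begin{equation*}
\Exp(\vv\trsp \mY_\infty \vv) = \Exp \liminf_n \vv\trsp \mY_n \vv \le \liminf_n \Exp(\vv\trsp \mY_n \vv) \le \Exp(\vv\trsp \mY_0 \vv),
\end{equation*}
where the last step is the scalar supermartingale property. Since this holds for every $\vv$, the matrix inequality $\Exp \mY_\infty \matle \Exp \mY_0$ follows upon noting that $\Exp \mY_\infty$ makes sense entrywise: its diagonal is integrable by the above, and off-diagonal entries are dominated in absolute value by the diagonal since $\mY_\infty \matge 0$.

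The proof is essentially organizational; no single step is hard, as all analytic content is delegated to scalar Doob and Fatou. The only mild care required is in routing scalar convergence of quadratic forms into entrywise convergence of the matrix, which is handled by polarization applied to a finite set of probe vectors so that only finitely many null sets need be unioned.
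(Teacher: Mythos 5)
Your proposal is correct and takes essentially the same route as the paper: reduce to scalar Doob via the nonnegative supermartingales $\{\vv\trsp \mY_n \vv\}$, recover entrywise convergence by probing with the standard basis vectors and the sums $\vct{e}_i+\vct{e}_j$, and pass positive semidefiniteness to the limit by closedness of $\cS_d^+$. Your Fatou-based treatment of $\Exp \mY_\infty \matle \Exp \mY_0$ is in fact slightly more careful than the paper's, which writes $\vv\trsp(\Exp \mY_\infty)\vv = \lim_n \Exp(\vv\trsp \mY_n \vv)$ without justifying the interchange; Fatou gives the needed inequality directly.
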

\begin{proof}
For any $\vv \in \mathbb R^d$, the process $\{ \vv\trsp \mY_n \vv \}$ is a scalar-valued nonnegative supermartingale, which converges a.s.\ to an integrable random scalar due to Doob's martingale convergence theorem (see e.g.\ Corollary 11.5 in \cite{klenke2013probability}). First, setting $\vv$ to $(0, \dots, 0, 1, 0 ,\dots, 0)\trsp$ we see that $\mY_n$'s diagonal converges to integrable scalars. Then, setting $\vv$ to $(0, \dots, 0, 1, 0 ,\dots, 0, 1, 0 ,\dots, 0)\trsp$ we see that $\mY_n$'s off-diagonal entries also converge to integrable scalars. The limit $\mY_\infty = \lim_{n\to \infty} \mY_n$ is in $\cS_d^+$ because $\cS_d^+$ is a closed set. Finally, for any $\vv \in \mathbb R^d$,
\begin{equation}
   \vv\trsp (\Exp \mY_\infty ) \vv  = \lim_{n \to \infty } \Exp( \vv \trsp \mY_n \vv ) \le \Exp( \vv \trsp \mY_0 \vv ) =   \vv \trsp (\Exp \mY_0) \vv,
\end{equation}
so $\Exp \mY_\infty \matle \Exp \mY_0$.
\end{proof}

The convergence of PSD supermartingales ensures that we can speak of the stopped matrix $\mY_\tau$ for any stopping time $\tau$, even when $\Pr(\tau = \infty) > 0$. We can use a similar method to extend some scalar versions of the optional stopping (sampling) theorems to matrices. For example:

\begin{theorem}[Matrix Optional Stopping]\label{thm:matrix-optional-stopping}
    Let $\{ \mY_n \}$ be an $\cS_d$-valued supermartingale, and {$\sigma \le \tau$ be two} stopping times. Then:
    \begin{enumerate}
        \item $\{ \mY_{n \wedge \tau} \}$ is also a supermartingale {on both $\{ \cF_n \}$ and $\{ \cF_{n\wedge \tau} \}$}.
        \item {If  (a) $\tau < N$ for some finite $N$, or (b) $\tau < \infty$ and $\{\mY_n\}$ is $\cS_d^{+}$-valued,
       then $ \mathbb E(\mY_\tau | \mathcal {F}_{\sigma}) \matle \mY_\sigma$. Consequently, $\Exp \mY_\tau \matle \Exp \mY_\sigma \matle \Exp \mY_0$.}
    \end{enumerate}
\end{theorem}
\begin{proof}{The proofs of two statements are as follows.
 \begin{enumerate}
        \item For any $\vv \in \mathbb R^d$, the scalar process $\{ \vv \trsp \mY_n \vv \}$ is a supermartingale on $\{ \cF_n \}$ and therefore so is the stopped process $\{ \vv \trsp \mY_{n\wedge \tau} \vv \}$ on both $\{ \cF_n \}$ and $\{ \cF_{n\wedge \tau} \}$ due to the optional stopping theorem (e.g.\ Theorem 10.15 of \citet{klenke2013probability}). This implies that $\{  \mY_{n \wedge \tau} \}$ is  a matrix supermartingale on both $\{ \cF_n \}$ and $\{ \cF_{n\wedge \tau} \}$.
        \item  For any $\vv \in \mathbb R^d$, we apply the optional sampling theorem (e.g.\ Theorem 10.11(i,ii) of \citet{klenke2013probability}) to the scalar-valued supermartingale $\{ \vv \trsp \mY_n \vv \}$ to obtain that $\vv\trsp \mY_\sigma \vv \ge  \Exp(\vv\trsp \mY_\tau \vv | \cF_\sigma) = \vv\trsp \Exp( \mY_\tau | \cF_\sigma)  \vv$. Therefore  $\mY_\sigma \matge \mathbb E(\mY_\tau | \mathcal {F}_{\sigma})$.
 \qedhere
    \end{enumerate}}
\end{proof}

This gives rise to the matrix versions of the two mutually equivalent statements of Ville's inequality \eqref{eqn:vi} via some manipulation of crossing events.

\begin{theorem}[Matrix Ville's Inequality]\label{thm:matvil1}
    Let $\{ \mY_n \}$ be an $\cS_d^+$-valued supermartingale. 
    Then, for any $\mA \in \cS_d^{++}$,
    \begin{equation}\tag{MVI}\label{eqn:mvi-new}
        \Pr( \exists n, \; \mY_n \nmatle \mA) \le \tr( (\Exp \mY_0) \mA^{-1} );
    \end{equation}
    or equivalently,
     \begin{equation}\tag{MVI}\label{eqn:mvi-stopped}
     \text{for any stopping time }\tau, \quad   \Pr( \mY_\tau \nmatle \mA) \le \tr( (\Exp \mY_0) \mA^{-1} ).
    \end{equation}
\end{theorem}
\begin{proof}
   We prove the stopped inequality first. Due to Part 3 of Theorem~\ref{thm:matrix-optional-stopping}, $\Exp \mY_{\tau} \matle \Exp \mY_0$. Now by \eqref{eqn:mmi} and the {monotonicity} of the trace,
    \begin{equation}
        \Pr( \mY_{\tau} \nmatle  \mA ) \le \tr(  \mA^{-1/2} (\Exp \mY_{\tau }) \mA^{-1/2} ) \le  \tr(  \mA^{-1/2} (\Exp \mY_0) \mA^{-1/2} ),
    \end{equation}
   which is the stopped \eqref{eqn:mvi-stopped}.  Next, we define the stopping time $\nu := \inf\{ n: \mY_n \nmatle \mA  \}$. Applying the result above with $\tau = \nu \wedge N$,
    \begin{equation}
      \Pr(\nu \le N) =  \Pr( \mY_{\nu \wedge N} \nmatle \mA) \le \tr(  (\Exp \mY_0) \mA^{-1} )
    \end{equation}
    for any $N \in \mathbb N$. Letting $N \to \infty$, we have
    \begin{equation}
        \Pr(\nu < \infty)\le \tr(  (\Exp \mY_0) \mA^{-1} )
    \end{equation}
    which concludes the proof of both forms of \eqref{eqn:mvi-new}.
\end{proof}

A natural question arises: in comparison to the scalar \eqref{eqn:vi}, does \eqref{eqn:mvi-new} provide tighter, looser, or incomparable concentration results? 
This question is essential as from every PSD supermartingale $\{\mY_n\}$ one can construct scalar nonnegative supermartingales $\{\tr \mY_n \}$ and $\{ 
\vv \trsp \mY_n \vv \}$.
We shall answer the question in detail, theoretically in \cref{sec:tight,sec:sclr} and empirically in \cref{sec:exp}. As our exposition unfolds, the following three conclusions shall become clear:
\begin{enumerate}
    \item \eqref{eqn:mvi-new} is in general not reducible to the scalar \eqref{eqn:vi}.
    \item To conceptualize the difference between \eqref{eqn:mvi-new} and \eqref{eqn:vi}, it suffices to look at the one-time case, comparing \eqref{eqn:mmi} and \eqref{eqn:mi}. The spectra of both the ``threshold'' matrix $\mA$ and the random matrix $\mX$ matter.
    \item While the one-time Markov case reveals the potential tightness benefit of \eqref{eqn:mvi-new}, in real data the conditions for such benefit often arise naturally in the sequential, anytime-valid testing case that necessitates Ville-type inequalities. 
\end{enumerate}
As an aside, we present the additional result of a matrix analog of Doob's $L^p$ inequality for forward matrix martingales in \cref{sec:further}. It is not necessarily relevant to our upcoming discussion in statistical contexts but might be of independent interest.

\subsection{Matrix Test Supermartingales}\label{sec:testproc}

Scalar-valued nonnegative supermartingales are often constructed incrementally: if one can find an adapted nonnegative process $\{ E_n \}$ that has the conditional expectation bound $\Exp(E_n|\cF_n) \le 1$, then their product $Y_n = \prod_{i=1}^n E_i$ is a nonnegative supermartingale {which satisfies $Y_0 = 1$ and thus satisfies \eqref{eqn:vi} with $\Pr(Y_\tau \ge \sdiv) \le \sdiv^{-1}$.}

We propose the following analogous way to construct $\cS_d^+$-valued supermartingales that involves a bit more flexibility.
\begin{lemma}[Incremental Construction of Matrix Supermartingales]\label{lem:mateval}
If an adapted $\cS_{d}^+$-valued process $\{\mE_n\}_{n \ge 1}$ and a predictable $\cS_{d}^{++}$-valued process $\{\mF_n\}_{n \ge 1}$ satisfy $\Exp(\mE_n | \cF_{n-1}) \matle \mF_n^{-1}$ almost surely for all $n \ge 1$.  Then, the following is a PSD supermartingale:
    \begin{equation}\label{eqn:e-to-nsm}
        \mY_n = \sqrt{\mF_{1}} \sqrt{\mE_{1}} \sqrt{\mF_{2}} \sqrt{\mE_{2}}\dots  \sqrt{\mF_{n}} \sqrt{\mE_{n}}  \sqrt{\mE_n}  \sqrt{\mF_{n}} \dots \sqrt{\mE_{2}}  \sqrt{\mF_{2}} \sqrt{\mE_{1}}  \sqrt{\mF_{1}},
    \end{equation}
    with $\mY_0 = \mI$.
\end{lemma}
In some cases, we can take $\mF_1 = \mF_2= \dots = \mI$, where the process has a simpler expression $ \mY_n = \sqrt{\mE_{1}} \dots \sqrt{\mE_{n}} \sqrt{\mE_n} \dots \sqrt{\mE_{1}}$.
\begin{proof}
    Note that
    \begin{align}
         & \Exp(\mY_n |\cF_{n-1}) = \sqrt{\mF_{1}} \sqrt{\mE_{1}} \sqrt{\mF_{2}} \sqrt{\mE_{2}}\dots  \sqrt{\mF_{n}} \Exp({\mE_{n}}|\cF_{n-1})  \sqrt{\mF_{n}} \dots \sqrt{\mE_{2}}  \sqrt{\mF_{2}} \sqrt{\mE_{1}}  \sqrt{\mF_{1}} 
        \\
        \matle & \sqrt{\mF_{1}} \sqrt{\mE_{1}} \sqrt{\mF_{2}} \sqrt{\mE_{2}}\dots  \sqrt{\mF_{n}} \mF_n^{-1}  \sqrt{\mF_{n}} \dots \sqrt{\mE_{2}}  \sqrt{\mF_{2}} \sqrt{\mE_{1}}  \sqrt{\mF_{1}}  = \mY_{n-1},
    \end{align}
    concluding the proof.
\end{proof}

{The expression of $\mY_n$ involves splitting the stepwise matrices into square roots and multiplying bidirectionally from the outside, which, we note, ensures the product is still in $\cS_d$ and has the conditional expectation property we want; in particular, $\mF_n \mE_n$ and $\mE_n \mF_n$ may not be symmetric.} The process $\{\mY_n \}$ can be computed incrementally on the fly by noting that
\begin{equation}
    \mY_n = \mL_n \mR_n, \quad \mL_n = \mL_{n-1} \sqrt{\mF_{n}} \sqrt{\mE_{n}}, \quad \mR_n =  \sqrt{\mE_n}  \sqrt{\mF_{n}} \mR_{n-1}.
\end{equation}
We remark that by defining $\mE_n' = \sqrt{\mF_n} \mE_n  \sqrt{\mF_n}$ and $\mF_n' = \mI$, we can obtain another PSD supermartingale $\sqrt{\mE_{1}'} \dots \sqrt{\mE_{n}'} \sqrt{\mE_n'} \dots \sqrt{\mE_{1}'}$. This option, however, 
involves slightly more matrix product computation than \eqref{eqn:e-to-nsm}.

\begin{algorithm}[!h]
\caption{Sequential Test via a Matrix Test Supermartingale}\label{alg:seqtest}
\begin{algorithmic}
\Require{Null hypothesis $\cH_0$ on the data stream $X_1,X_2,\dots$ taking values in $\cX$; \\  Functions $\mE: \cX \to \cS_d^+$ and $\mF: \cX \to \cS_d^{++}$ such that 
$\Exp(\mE(X_n) | X_1,\dots, X_{n-1}) \matle \mF^{-1}(X_n)$ whenever $\cH_0$ is true;
\\
Type-I error level $\alpha \in (0, 1)$; $\mA \in \cS_d^{++}$ such that $\tr (\mA^{-1}) = \alpha$}
\Ensure{$\cH_0$ is falsely rejected with probability at most $\alpha$}

\vskip .5em
\State{$\mL \gets \mI$, $\mR \gets \mI$}
\For{$n = 1,2,\dots$}
\State \textbf{observe} $X_n$
\State $\mL \gets \mL \sqrt{\mF(X_{n})} \sqrt{\mE(X_{n})}$
\State $\mR \gets  \sqrt{\mE(X_n)}  \sqrt{\mF(X_{n})} \mR$
\If{  $\mL \mR \nmatle \mA$ }
\State \textbf{reject} $\cH_0$
\EndIf
\EndFor

\end{algorithmic}
\end{algorithm}

{Recall from \cref{sec:scalars} the concept of testing a null hypothesis $\cH_0$ with a scalar test supermartingale $\{Y_n\}$: a process with $Y_0 = 1$ that is a nonnegative supermartingale under $\cH_0$.} With Lemma~\ref{lem:mateval}, we can recover matrix versions of several scalar test supermartingales 
in the recent sequential statistics literature. In the next subsection, we will state some conditions on a sequence of $\cS_d$-valued random matrices $\{\mX_n\}$ that will lead to two sequences of $\{ \mE_n \}$ and $\{ \mF_n \}$ that satisfy Lemma~\ref{lem:mateval}, so that a $\cS_d^+$-valued supermartingale $\{ \mY_n \}$ can be constructed accordingly via \eqref{eqn:e-to-nsm}.
 Combining with \eqref{eqn:mvi-new}, we form the following generic scheme for sequentially testing a null hypothesis $\cH_0$ using a PSD supermartingale outlined in \cref{alg:seqtest}, where if $\cH_0$ is true the procedure has at most $\alpha$ probability ever rejecting it.

We further remark that, while with scalar nonnegative supermartingales formed multiplicatively, one often works on the log scale to ease the computation. That is, if $M_n = \prod_{i=1}^n E_i$ is a scalar nonnegative supermartingale, one often adds up $\log E_1 + \dots + \log E_n$ and rejects the null when this sum exceeds $\log(1/\alpha)$. With the PSD supermartingale formed in Lemma~\ref{lem:mateval}, while it is indeed true that
\begin{equation}
    \Pr( \exists n, \; \log \mY_n \nmatle \log \mA ) \le  \Pr( \exists n, \;  \mY_n \nmatle  \mA ) \stackrel{ \text{\eqref{eqn:mvi-new}} }{\le} \tr(\mA^{-1})
\end{equation}
due to the monotonicity of the matrix logarithm (Fact~\ref{fct:opmonofuns}), one can not compute $\log \mY_n$ via summing $\log \mE_i$ and $\log \mF_i$. An additive alternative to \eqref{eqn:mvi-new} that avoids matrix multiplication might be of interest for future work.

\subsection{Examples: Conditional Mean Testing Problems}\label{sec:examples-mean}

We have numerous examples for Lemma~\ref{lem:mateval} where one can conduct sequential hypothesis testing with \cref{alg:seqtest}. All examples below, we note, involve the null hypothesis in the form of ``the conditional mean of $\mX_n$ equals $\mM_{n}$'' where $\{\mM_n\}$ is a pre-specified predictable sequence. A common special case is when the matrices $\{ \mX_n \}$ are i.i.d.\ and one tests if their common mean equals a specific value. {It is not straightforward to see that some of these examples satisfy the assumptions of Lemma~\ref{lem:mateval}, so we prove them in \cref{sec:pf-tp}.}

First,
we have the moment generating function (MGF)-based supermartingales whose well-known scalar case is mentioned by e.g.\ \citet[Fact 1(a)]{howard2020time}.

\begin{example}[Matrix MGF Supermartingale]
    Let $\{\mX_n\}$ be an adapted sequence of $\cS_d$-valued random matrices with conditional means $\Exp (\mX_n | \cF_{n-1}) = \mM_n$ and matrix-valued conditional MGF upper bound $\mG_n(\gamma) \matge \Exp (\e^{\gamma(\mX_n - \mM_n)} | \cF_{n-1})$. Then, for each $n$, we can take $\mE_n = \e^{\gamma_n(\mX_n - \mM_n)}$, $\mF_n = \mG_n(\gamma_n)^{-1}$ where $\{ \gamma_n \}$ are predictable scalars, in Lemma~\ref{lem:mateval}.
\end{example}

It is worth noting that in general it is hard to prove matrix MGF bounds and they are known only in very few cases. On the other hand, much more is known on bounding the matrix \emph{log}-MGF $\log \left(\mG_n(\cdot) \right)$ under various distribution assumptions, obtained via the Lieb-Tropp technique mentioned in \cref{sec:spectral-bounds}. These log-MGF upper bounds only lead to scalar-valued supermartingales \eqref{eqn:traceproc} instead of matrix-valued supermartingales. We review some existing results of matrix MGFs by \cite{tropp2012user,tropp2015introduction} in \cref{sec:mgf-logmgf}.


Our second example extends the wealth process of \emph{betting} against the fairness of a coin. The scalar case was initially proposed for Bernoulli coin tosses, generalized to any bounded random scalars by \citet[Chapters 3.1 and 3.2 resp.]{shafer2005probability}, and recently applied to sequential mean testing by \cite{waudby2020estimating}. We have the following corresponding result for bounded random matrices.

\begin{example}[Matrix Betting] \label{ex:bet}
    Let $\{\mX_n\}$ be an adapted sequence of 
 $\cS_d^{+}$-valued random matrices with conditional means $\Exp (\mX_n | \cF_{n-1}) = \mM_n$ and upper bounds $\mB_n$ that are predictable (i.e., $\mX_n \matle \mB_n$ a.s., and $\mB_n$ is $\cF_{n-1}$-measurable).  We can take, in Lemma~\ref{lem:mateval},
    \begin{equation}
        \mE_n =  \mI + \gamma_n (\mX_n - \mM_n), \quad \mF_n = \mI,
    \end{equation}
    where 
    \begin{equation}\label{eqn:bet-fraction}
      -\frac{1}{\lambda_{\max}(\mB_n-\mM_n)} <  \gamma_n < \frac{1}{\lambda_{\max}(\mM_n)}
    \end{equation}
    is predictable.
\end{example}
To see that this is a valid $(\mE_n,\mF_n)$ pair for Lemma~\ref{lem:mateval},
it is clear that  $\Exp (\mE_n | \cF_{n-1}) =  \mI$ and  \eqref{eqn:bet-fraction} ensures $\mE_n \matge 0$. {The resulting $\{\mY_n\}$ is therefore an $\cS_d^+$-valued matrix martingale.
\begin{remark}\label{rmk:bet}
Using Example~\ref{ex:bet} to test the null $\cH_0: \Exp(\mX_n|\cF_{n-1}) =\mM_n $ with Algorithm~\ref{alg:seqtest}, while any choice of predictable $\{\gamma_n\}$ that satisfies \eqref{eqn:bet-fraction} makes sure the type-I error $\le \alpha$ under $\cH_0$, different choices of the sequence lead to difference in \emph{power} when $\cH_0$ does not hold. A simple example is stated below.

Suppose $\Exp (\mX_n|\cF_{n-1}) = \mM_n + c \mI$ for some unknown $c$ ($c=0$ under null $\cH_0$). Since $\Exp[\mE_n|\cF_{n-1}] = (1 + \gamma_n c)\mI$, we can see from the proof of Lemma~\ref{lem:mateval} that the ``growth'' event $\Exp (\mY_n|\cF_{n-1}) \matge \mY_{n-1}$ happens if and only if the predictable $\gamma_n$ and the ground-truth $c$ have matching signs: $\gamma_n c \ge 0$. If the sign of $c$ is not known beforehand, there are two ways to achieve this:
\begin{itemize}
    \item Since $\gamma_n$ can be picked predictably, one can estimate $c$ from $\mX_1,\dots, \mX_{n-1}$ and match the sign of $\gamma_n$ with this estimator. For example, $\hat c_{n-1} = \tr((\mX_1 - \mM_1) + \cdots + (\mX_{n-1}-\mM_{n-1}))/(d(n-1))$, and $\gamma_n = \sg(\hat c_{n-1}) \gamma $ for some $0 < \gamma < \min\left\{ \frac{1}{\lambda_{\max}(\mB_n-\mM_n)}, \frac{1}{\lambda_{\max}(\mM_n)}  \right\} $.
    \item (Hedging). On the other hand, one may launch \emph{two} test martingales, $\{ \mY_n^+ \}$ with $\gamma_n > 0$, and $\{ \mY_n^- \}$ with $\gamma_n < 0$ for all $n$. Since one of them must match the correct sign of $c$ if $c \neq 0$, one can use $\{(\mY_n^+ + \mY_n^-)/2\}$ as the test process (which is still a test martingale for the null $\cH_0:c=0$) to have power in detecting any $c\neq 0$.
\end{itemize}
The same logic applies to many of the subsequent examples.
\end{remark}
}

Besides these two examples where we extend prior scalar results to matrices, we also note that
\citet[Section 6.5]{howard2020time} prove a total of eight inequalities for $\cS_d$-valued martingale differences $\{ \mZ_n \}$ in the style of
\begin{equation}
    \Exp (\exp( \mZ_n -  \mC_n) | \cF_{n-1}) \matle \exp(  \mC_n'  ),
\end{equation}
for some respectively $\cF_{n}$-, $\cF_{n-1}$-measurable matrices $\mC_n, \mC_n'$ under eight different conditions. 
These directly lead to $\cS_d^+$-valued supermartingales via Lemma~\ref{lem:mateval}. However, \cite{howard2020time} do \emph{not} consider combining these conditional expectation bounds into \emph{matrix}-valued supermartingales via \eqref{eqn:e-to-nsm}; they pursue scalar supermartingales instead, with methods we discuss later in \cref{sec:sclr}.
Of their eight inequalities,
we will introduce two interesting cases now and (re-)prove them in \cref{sec:pf-tp}, omitting the other six due to their similar construction.

First, the self-normalized supermartingale that only assumes the second moment exists \citep[Part (f) of Section 6.5]{howard2020time}.

\begin{example}[Finite-Variance Self-Normalized Matrix Supermartingale]\label{ex:sn}
     Let $\{\mX_n\}$ be an adapted sequence of $\cS_d$-valued random matrices with conditional means $\Exp (\mX_n | \cF_{n-1}) = \mM_n$ and conditional variance upper bounds $\Exp( (\mX_n - \mM_n)^2 | \cF_{n-1} ) \matle \mV_n $ that are predictable. We can take, in Lemma~\ref{lem:mateval},
\begin{equation}\label{eqn:sn-ef}
    \mE_n =  \exp(\gamma_n (\mX_n - \mM_n) - \gamma_n^2 (\mX_n - \mM_n)^2/6), \quad \mF_n = \exp \left( -\frac{\gamma_n^2}{3}\mV_n \right)
\end{equation}
where $\{ \gamma_n \}$ are predictable positive scalars.
\end{example}
 {As a bibliographical remark, we follow the usage of \cite{de2009self,Bercu2019SN,Bercu2019SN2}, among others, in adopting the name ``self-normalized''. These authors refer to a martingale $\{M_n\}$ on $\{ \cF_n \}$ as ``self-normalized'' if maximal inequalities of $\{Y_n\}$ are derived via the predictable quadratic variation $\sum \Exp( (Y_n - Y_{n-1})^2  |\cF_{n-1})$  and  the total quadratic variaiton $\sum (Y_n - Y_{n-1})^2 $. The summands of these two quadratic variations correspond loosely to the $\mV_n$ and $(\mX_n - \mM_n)^2$ terms in \eqref{eqn:sn-ef}. The next example further demonstrates this concept.
 }

Second, a bound for symmetric random matrices that have a symmetric (around $0\in \cS_d$), potentially heavy-tailed distribution \citep[Part (d) of Section 6.5]{howard2020time}.

\begin{example}[Symmetric Self-Normalized Matrix Supermartingale]\label{ex:sym}
    Let $\{\mX_n\}$ be an adapted sequence of $\cS_d$-valued random matrices with conditional means $\Exp (\mX_n | \cF_{n-1}) = \mM_n$ and satisfy $\mX_n - \mM_n | \cF_{n-1} \stackrel{d}= \mM_n - \mX_n | \cF_{n-1}$. Then, we can take, in Lemma~\ref{lem:mateval},
\begin{equation}
    \mE_n = \exp \left( \gamma_n (\mX_n-\mM_n) - \frac{\gamma_n^2 (\mX_n-\mM_n)^2}{2} \right), \quad \mF_n = \mI,
\end{equation}
where $\{ \gamma_n \}$ are predictable positive scalars.
\end{example}

Further, let us present two examples in the light- and heavy-tail regimes respectively that do not appear among the ones due to \citet[Section 6.5]{howard2020time}. 
First, a matrix extension of empirical Bernstein inequalities, in the style of \citet[Theorem 4]{howard2021time}, following whom we define the function 
\begin{equation}
    \psiE(x) = -\log(1-x) - x 
\end{equation}
for $x \in [0,1)$. The notation comes from the fact that $\psiE$ is the log-MGF of a centered standard exponential random variable. The following is a rephrasing of a recent result by \citet[Lemma 4.1]{wang2024sharp} in the language of our Lemma~\ref{lem:mateval}.
\begin{example}[Empirical Bernstein Matrix Supermartingale]\label{ex:emp-bern-ms}
    Let $\{\mX_n\}$ be $\cS_d$-valued random matrices adapted to $\{ 
\cF_n \}$ with conditional means $\Exp (\mX_n | \cF_{n-1}) = \mM_n$. Further, suppose there is a predictable and integrable sequence of $\cS_d$-valued random matrices $\{ \widehat \mX_n \}$ such that $\lambda_{\min}(\mX_n - \widehat \mX_n ) \ge -1$. Then, we can take, in Lemma~\ref{lem:mateval},
\begin{equation}
     \mE_n =  \exp(\gamma_n(\mX_n - \widehat{\mX}_n)  - \psiE(\gamma_n) (\mX_n - \widehat{\mX}_n)^2) ,\quad \mF_n = \exp(-\gamma_n (\mM_n - \widehat{\mX}_n)  ),
\end{equation}
where $\{ \gamma_n \}$ are predictable $(0,1)$-valued scalars.
\end{example}

Let us briefly explain the name ``empirical Bernstein''. The original scalar version of the inequality above (that $\Exp(\mE_n | \cF_{n-1}) \matle \mF_n^{-1}$) was proved by \citet[Theorem 4]{howard2021time}, generalized and studied closer by \citet[Theorem 2]{waudby2020estimating}, producing a confidence sequence for the common conditional mean of martingale dependent, $[0,1]$-valued scalars that is asymptotically as sharp as the Bernstein inequality without assuming knowing the underlying variance while adapting to it, and beats numerous then-existing empirical Bernstein inequalities in the literature. The matrix inequality that $\Exp(\mE_n | \cF_{n-1}) \matle \mF_n^{-1}$ above was recently proved by \citet[Lemma 4.1]{wang2024sharp}, who applied the same scalarization technique as \cite{howard2020time} (see \cref{sec:sclr}) to obtain a scalar test supermartingale and a closed-form spectral empirical Bernstein inequality for $\cS_d^{[0,1]}$-valued random matrices, of similar sharpness.

Here, on the other hand, we demonstrate that if a closed-form spectral confidence set is not of interest, one can sequentially test the null $\Exp[\mX_n|\cF_{n-1}] = \mM$, under the assumption that $\{\mX_n\}$ all take values in $\cS_d^{[0,1]}$, by constructing a matrix-valued test supermartingale using Lemma~\ref{lem:mateval} of potential larger power compared to the scalarization approach of \cite{wang2024sharp}. The hyperparameters (predictable scalars $\{ \gamma_n \}$ and matrices $\{\widehat{\mX}_n\}$) in Example~\ref{ex:emp-bern-ms} can be set according to the discussion of \citet[Section 4]{wang2024sharp}.


Our last example generalizes the scalar ``Catoni supermartingales'' due to \cite{catoni2012challenging} in the fixed-time case and \cite{wang2023catoni} in the sequential case. With scalars, this influence function-based technique produces subGaussian confidence intervals and sequences for heavy-tailed random variables with a $p$\textsuperscript{th} moment upper bound where $p \in (1,2]$. In the $p=2$ special case \cite{wang2023catoni} demonstrate that the ``Catoni-style confidence sequence'' is tighter than the self-normalization technique which we already generalized to matrices in Example~\ref{ex:sn},
under a variance upper bound assumption. Our matrix extension of these results starts with the standard definition of Catoni's influence function (see e.g.\ \citet[Section 1]{chen2021generalized}, \citet[Section 9]{wang2023catoni}).

\begin{definition}
    An increasing function $\phi:\mathbb R \to \mathbb R$ is \emph{upper $p$-logarithmically contractive} if $\phi(x) \le \log(1+x+|x|^p/p)$; \emph{lower $p$-logarithmically contractive} if $-\log(1-x+|x|^p/p) \le \phi(x)$.
\end{definition}

For example, the function
\begin{equation}
    \phi_p (x) = \begin{cases} \log(1 + x + x^p/p), & x \ge 0; \\ -\log(1 - x + (-x)^p/p), & x < 0. \end{cases}
\end{equation}
is both upper and lower $p$-logarithmically contraction.  

\begin{example}[Matrix Catoni Supermartingales]\label{ex:cat}
     Let $\{\mX_n\}$ be an adapted sequence of $\cS_d$-valued random matrices with conditional means $\Exp (\mX_n | \cF_{n-1}) = \mM_n$ and conditional $p$\textsuperscript{th} moment upper bounds $\Exp( (\abs(\mX_n - \mM_n))^p | \cF_{n-1} ) \matle \mV_{n} $ that are predictable. For any upper $p$-logarithmically contractive function, we can take in Lemma~\ref{lem:mateval},
\begin{equation}
    \mE_n = \exp(\phi (  \gamma_n(\mX_n - \mM_n) )  ) , \quad \mF_n =  \left ( \mI + \frac{\gamma_n^{p}}{p} \mV_n \right )^{-1};
\end{equation}
and for any lower $p$-logarithmically contractive function, we can take in Lemma~\ref{lem:mateval},
\begin{equation}
    \mE_n = \exp(- \phi (  \gamma_n(\mX_n - \mM_n) )  ) , \quad \mF_n =  \left ( \mI + \frac{\gamma_n^{p}}{p} \mV_n \right )^{-1}.
\end{equation}
Above, $\{ \gamma_n \}$ are predictable positive scalars.
\end{example}

Example~\ref{ex:cat} is proved in \cref{sec:pf-tp}. We remark that, in the scalar case, the two supermartingales constructed this way remain supermartingales under \emph{one-sided nulls}, one for each side. This fails to hold with matrices (e.g., the null that $\Exp (\mX_n | \cF_{n-1}) \matle \mM_n$) as there is no guarantee that $\exp(\pm\phi(\cdot))$ are operator monotone. Still, the two matrix test supermartingales play the role of detecting null-deviating behavior of opposite directions, and
a powerful test for the point null $\Exp (\mX_n | \cF_{n-1}) = \mM_n$ can be formed by a convex combination of the two.


\subsection{Matrix e-Values and e-Processes}\label{sec:mat-e-proc}

We have thus far generalized scalar nonnegative supermartingales to PSD matrix supermartingales, as well as their numerous applications in sequential statistics as ``test supermartingales''. However, with scalars, it is known that nonnegative supermartingales fail to be powerful in some examples involving large composite null hypotheses, and a wider class of processes called \emph{e-processes} is often used instead \citep{ramdas2022testing}. Loosely speaking, an e-process is a nonnegative process upper bounded by supermartingales. We first recall the formal definitions of scalar e-values and e-processes, and generalize them to matrices as well.

Throughout this subsection, we consider $X_1,X_2,\dots$ a sequence of random observations taking values in some space $\cX$ and they generate the filtration $\{\cF_n\}$.
We use notations $\Pr_{P}$, $\Exp_{P}$ and the general terminology ``for $P$" (e.g.\ we may say an adapted stochastic process is a martingale for $P$) to refer to probabilistic statements when the observations $X_1,X_2,\dots$ are distributed jointly according to $P$, where $P$ is some distribution on $\cX \times \cX \times \dots$.

Let $\cP$ be a set of distributions on $\cX \times \cX \times \dots$. {An $\cF_{\infty}$-measurable nonnegative random variable $Y$ is called an e-value for $\cP$ if, for any $P \in \cP$, its mean $\Exp_{P}Y \le 1$.}
An adapted nonnegative process $\{Y_n\}$ is called an e-process for $\cP$ if, {for any stopping time $\tau$, the stopped variable $Y_\tau$ is an e-value for $\cP$. It is known that, due to \citet[Lemma 6]{ramdas2020admissible}, $\{Y_n\}$ is an e-process 
if and only if for any $P \in \cP$ it is upper bounded by a supermartingale $\{M_n^P\}$ with $M_0^P \le 1$.}
Clearly, if $\{Y_n\}$ is a nonnegative supermartingale for $\cP$ (i.e.\ for every $P \in \cP$), 
the optional stopping theorem implies that
it is also an e-process for $\cP$, but e-processes are a much more general class~\citep{ramdas2022testing,ruf2023composite}.

Scalar e-processes also satisfy \eqref{eqn:vi}, both time-uniformly and at a stop time:
\begin{gather}
    \text{For any $P \in \cP$}: \quad \Pr_P( \exists n, \ Y_n \ge \sdiv ) \le  \sdiv ^{-1};\\
     \text{and for any stopping time $\tau$},  \quad \Pr_P( Y_\tau  \ge \sdiv ) \le \sdiv^{-1}.   
\end{gather}
This property enables scalar e-processes to also be used for testing the null hypothesis that the data is drawn from some $P \in \cP$: we may reject the null as soon as the e-process exceeds $1/\alpha$ (for some prespecified constant $\alpha \in (0,1)$) and be guaranteed that the type-I error is at most $\alpha$. E-processes are particularly useful for composite sequential testing problems for which nonnegative supermartingales are powerless 
 \citep{ramdas2022testing,ruf2023composite}.



We define matrix e-values and e-processes similarly.
\begin{definition}[Matrix e-Value]\label{def:mat-ev}
    Let $\cP$ be a set of distributions on $\cX \times \cX \times \dots$. An $\cS_d^+$-valued, $\cF_{\infty}$-measurable random matrix $\mY$ is called a (matrix) e-value for $\cP$ if $\Exp_P \mY \matle \mI$ for any $P \in \cP$. 
\end{definition}

\begin{definition}[Matrix e-Process]\label{def:mat-epr}
Let $\cP$ be a set of distributions on $\cX \times \cX \times \dots$. An $\cS_d^{+}$-valued adapted stochastic process $\{\mY_n\}$ is called a \emph{(matrix) e-process} for $\cP$ if, for any stopping time $\tau$, the stopped random matrix $\mY_\tau$ is a matrix e-value for $\cP$.
\end{definition}

It can be seen from the matrix optional stopping theorem (Theorem~\ref{thm:matrix-optional-stopping}) that if $\{ \mY_n \}$ is a PSD supermartingale for $\cP$ then it is also an e-process for $\cP$. More generally, the following is a sufficient condition for a matrix process to be an e-process:

\begin{proposition}\label{prop:epr-upper-bounded}
    Suppose for any $P \in \cP$, there exists an $\cS_d^+$-valued supermartingale $\{\mM_n^{P}\}$ for $P$  with $\mM_0^{P} \matle \mI$ that upper bounds the process $\{\mY_n\}$, i.e., $\mM_n^{P} \matge \mY_n$ for all $n$ almost surely for $P$.  Then, $\{\mY_n\}$ is a matrix e-process for $\cP$.
\end{proposition}
This can be seen by simply noting that $\Exp_P \mY_\tau \matle  \Exp_P \mM_\tau^P \matle \mI$ for any $P,\tau$. While the converse to Proposition~\ref{prop:epr-upper-bounded} (that it is a necessary condition for e-processes as well) is true in the scalar case \citep[Lemma 6]{ramdas2020admissible}, the argument fails to extend to matrices as the construction for scalars relies on the Snell envelope which we have yet to find a suitable generalization for matrices due to the non-existence of maximum and minimum under $\matle$ within $\cS_d$.

Similar to the scalar case above, matrix e-processes satisfy \eqref{eqn:mvi-new}, both time-uniform and at a stopping time:
\begin{gather}
    \text{For any $P \in \cP$}: \quad \Pr_P( \exists n, \ \mY_n \nmatle \mdiv ) \le \tr(\mdiv^{-1});\\
     \text{and for any stopping time $\tau$},  \quad \Pr_P( \mY_\tau  \nmatle \mdiv ) \le \tr(\mdiv^{-1}),
\end{gather}
{which follows directly from the proof of Theorem~\ref{thm:matvil1}.} Therefore, one can run a procedure similar to \cref{alg:seqtest} with an e-process instead {for sequential hypothesis testing}.

Simple examples of matrix e-processes following the previous subsection {therefore} include lower bounds of $\cS_d^+$-valued supermartingales. {In the scalar case, an e-process is often constructed for a composite null hypothesis by taking an infimum over the test supermartingales under each of the null distributions. While a similar construction carries over to the matrices, it is important to note that in general \emph{the} matrix minimum does not exist: for most pairs $\mA, \mB \in \cS_d$,
there is no largest lower bound of $\mA$ and $\mB$ in the $\matle$ order within $\cS_d$ \citep[Theorem 6]{kadison1951order}. However, we can define \emph{a} matrix minimum, for example as follows:
\begin{definition}\label{def:min}
    We define the following minimum operation $\curlyvee : \cS_d \times \cS_d \to \cS_d$:
    \begin{equation}
        \mA \curlyvee \mB = \begin{cases}
            \mA, & \text{if }\mA \matle \mB; \\
            \mA - \lambda_{\max}(\mA -\mB) \cdot \mI, & \text{otherwise}.
        \end{cases}
    \end{equation}
    Note that $\mA \curlyvee \mB \matle \mA$ and $\mA \curlyvee \mB \matle \mB$, and that in general $\mA \curlyvee \mB \neq \mB \curlyvee \mA$.
\end{definition}

A few comments follow this definition of minimum. First, while the operation $\curlyvee$ is not commutative,
one may symmetrize it by defining $\mA \curlyvee^* \mB = \frac{1}{2}(\mA \curlyvee \mB + \mB \curlyvee \mA)$, which, while commutative and is a minimum, is not associative. Second, the non-existence of the ``largest lower bound'' in the $\matle$ order states that there can exist a matrix $\mC$ that satisfies
\begin{equation}
    \mC \matle \mA, \mC \matle \mB, \mC \nmatle \mA \curlyvee \mB.
\end{equation}
{For example, $\mB \curlyvee \mA$ is often an example of $\mC$ that satisfies above. On the other hand, however,} it is easy to verify that there does \emph{not} exist a matrix $\mC$ that satisfies
\begin{equation}
    \mC \matle \mA, \mC \matle \mB, \mC \matgt \mA \curlyvee \mB,
\end{equation}
in the sense that there are several choices of ``matrix minima'', Definition~\ref{def:min} being one among them, and these choices do not dominate one another. Finally, we note that the matrix minimum
 enables us to construct matrix e-processes for null hypotheses that are finite unions of several conditions under which a matrix test supermartingale is constructible:
 \begin{proposition}
     Suppose $\cP = \bigcup_{k=1}^K \cP_k$ and $\{\mY^k_{n}\}$ is a matrix e-process for $\cP_k$. Then, $\{ \mY_n^1 \curlyvee \mY_n^2 \curlyvee \dots \curlyvee \mY_n^K \}$ is a matrix e-process for $\cP$.
 \end{proposition}
 {Here, the $K$-fold minimum  $\mY_n^1 \curlyvee \mY_n^2 \curlyvee \dots \curlyvee \mY_n^K$  can be defined either left-associatively as $(\dots(\mY_n^1 \curlyvee \mY_n^2) \curlyvee \dots) \curlyvee \mY_n^K$ or right-associatively as $\mY_n^1 \curlyvee (\dots \curlyvee (\mY_n^{K-1} \curlyvee \mY_n^K ) \dots)$. These lead to two different e-processes.}
 One can take the average over all $K!$ permutations if one desires an e-process that is symmetric w.r.t.\ all $K$ subclasses.
 A simple example is as follows.}
Let $ \mM,\mM' \matle \mB$ be three matrices in $\cS_d^{++}$, and we define $\cB(\mB, \mM)$ to be the set of distributions on $\cS_d^{+}$ upper bounded by $\mB$ with mean $\mM$. Further, we define
the distribution families
\begin{equation}
    \cP_1 = \{  P_1\otimes P_2\otimes \dots : P_n \in \cB(\mB, \mM) \text{ for all }n     \}
\end{equation}
and
\begin{equation}
    \cP_2 = \{  P_1\otimes P_2\otimes \dots : P_n \in \cB(\mB, \mM') \text{ for all }n     \}.
\end{equation}
According to Example~\ref{ex:bet}, we can construct a $\cS_d^{+}$-valued betting process $\{ \mY_n \}$ which is a supermartingale under any of $P \in \cP_1$, and another $\{ \mY_n' \}$ which is a supermartingale under any of $P \in  \cP_2$. Then the minimum process $\{  \mY_n \curlyvee \mY_n'  \}$ is a matrix e-process for $\cP_1 \cup \cP_2$, useful for testing sequentially the null that the true mean is always \emph{either} $\mM$ \emph{or}
 $\mM'$.



\section{Backward PSD Submartingales}\label{sec:bw}

We have so far discussed the properties and applications $\cS_d^+$-valued supermartingales on a forward filtration $\{\cF_n\}_{n \ge 0}$. We now move on to a backward filtration $\{ \cG_0 \}_{n \ge 0}$ and its adapted $\cS_d^+$-valued submartingales to extend the established theory with scalars. This involves first considering a forward submartingale of finite length on the finite forward filtration $\{\cF_n\}_{0 \le n \le N}$, inverting the time, and extending indefinitely into the ``past'' (before the inversion). For this reason, we shall always be explicit with the filtration and the time range in this section.

\subsection{Submartingales and Maximal Inequalities: From Forward to Backward}

In the proof of Theorem~\ref{thm:matrix-optional-stopping},
we appeal to the simple fact that if $\{ \mY_n \}$ is a matrix supermartingale then $\{ \vv\trsp \mY_n \vv \}$ is a scalar supermartingale, which enables us to derive stopping time theorems for matrix supermartingales via those for scalar supermartingales. This clearly
remains still valid if we replace ``supermartingale"s with ``submartingale"s. To wit, we say that $\{ 
\mY_n \}_{n\ge 0}$ adapted to $\{ \cF_n \}_{n \ge 0}$ is a matrix submartingale if $\Exp (\mY_n | \cF_{n-1} ) \matge \mY_{n-1}$. It is clear from the proof of Theorem~\ref{thm:matrix-optional-stopping} that,
\begin{lemma}\label{lem:matrix-optional-stoppong-sub}
    Let $\{ \mY_n \}_{n\ge 0}$ be an $\cS_d$-valued submartingale, and $\tau$ be a stopping time on $\{ \cF_n \}_{n\ge 0}$. Then, $\{ \mY_{n \wedge \tau} \}_{n\ge 0}$ is also a submartingale on $\{ \cF_n \}_{n\ge 0}$. Consequently, if $\tau \le N$ almost surely for some $N \in \mathbb N$, then $\Exp \mY_\tau  \matle \Exp \mY_N$. 
\end{lemma}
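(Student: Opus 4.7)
The plan is to mirror the proof strategy used for the supermartingale version in \cref{lem:matrix-optional-stopping}, exploiting the fact that Loewner inequalities reduce to inequalities of quadratic forms $\vv\trsp (\cdot)\vv$ for each deterministic $\vv \in \mathbb{R}^d$. The submartingale case is essentially obtained by flipping the direction of every inequality in the supermartingale argument, so no new conceptual ingredient is required.

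First I would record the vector analog of the earlier characterization lemma, namely that $\{\mY_n\}$ is a matrix submartingale on $\{\cF_n\}$ if and only if for every deterministic $\vv \in \mathbb{R}^d$ the scalar process $\{\vv\trsp \mY_n \vv\}$ is a scalar submartingale. The proof is the same sequence of biconditionals used for the supermartingale version, with $\matge$ replaced by $\matle$ inside the conditional-expectation difference. Then for the first statement of the lemma, I would fix an arbitrary $\vv$, invoke the classical scalar optional stopping theorem (e.g.\ Theorem 10.15 in \cite{klenke2013probability}) to deduce that $\{\vv\trsp \mY_{n\wedge\tau}\vv\}$ is a scalar submartingale, and then apply the characterization in reverse to conclude that $\{\mY_{n\wedge\tau}\}$ is a matrix submartingale on $\{\cF_n\}$.

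For the second statement, under the assumption $\tau \le N$ almost surely, I would again fix an arbitrary $\vv \in \mathbb{R}^d$ and apply the scalar optional sampling theorem (e.g.\ Theorem 10.11(i) in \cite{klenke2013probability}) to the bounded stopping time $\tau$ and the scalar submartingale $\{\vv\trsp \mY_n \vv\}$, obtaining
\begin{equation}
\vv\trsp (\Exp \mY_\tau) \vv = \Exp(\vv\trsp \mY_\tau \vv) \le \Exp(\vv\trsp \mY_N \vv) = \vv\trsp (\Exp \mY_N) \vv.
\end{equation}
Since $\vv$ is arbitrary, this yields $\Exp \mY_\tau \matle \Exp \mY_N$.

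There is no real obstacle here; the only point to be slightly careful about is that the submartingale analog does not enjoy the $\cS_d^+$-valued \emph{unbounded}-$\tau$ extension that Part~3 of \cref{lem:matrix-optional-stopping} provided (because the matrix submartingale convergence theorem of \cref{thm:matdoobconv} was specifically for nonnegative \emph{super}martingales, where $\Exp\mY_0$ bounds the process from above). Accordingly, I would present only the bounded-$\tau$ conclusion in this lemma, which is all that is stated.
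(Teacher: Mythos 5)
Your proposal is correct and matches the paper's approach exactly: the paper simply notes that the argument of \cref{lem:matrix-optional-stopping} carries over with the inequalities reversed, i.e., reduce to the scalar submartingales $\{\vv\trsp \mY_n \vv\}$ via quadratic forms and apply the classical optional stopping and optional sampling theorems. Your remark that the unbounded-$\tau$ extension (Part 3 of \cref{lem:matrix-optional-stopping}) is unavailable here, since \cref{thm:matdoobconv} concerns nonnegative supermartingales, is also consistent with the paper stating only the bounded case.
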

Doob's submartingale inequality, which is ``dual" to Ville's supermartingale inequality, is also matrix generalizable.
\begin{theorem}[Matrix Doob's inequality]\label{thm:matdoob}
    Let $\{ \mY_n \}$ be an $\cS_d^+$-valued submartingale. For every $N \in \mathbb N$ and every $\mA \in \cS_d^{++}$,
    \begin{equation}\label{eqn:matdoob}
        \Pr\left( \exists n \le N, \mY_n \nmatle \mA \right)  \le \tr ( \Exp (\mY_N\id_{ \{ \exists n \le N, \mY_n \nmatle \mA\} }) \mA^{-1}  ) \le \tr( (\Exp \mY_N) \mA^{-1}  ).
    \end{equation}
\end{theorem}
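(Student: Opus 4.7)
The plan is to adapt the standard scalar proof of Doob's submartingale inequality, substituting Lemma~\ref{lem:matrix-optional-stoppong-sub} (matrix optional stopping for submartingales) and the matrix Markov inequality \eqref{eqn:mmi} for their scalar counterparts. Let $\nu = \inf\{ n \ge 1 : \mY_n \nmatle \mA \}$ with the convention $\inf \emptyset = \infty$, define the bounded stopping time $\tau = \nu \wedge N$, and set $E = \{ \exists n \le N : \mY_n \nmatle \mA \}$.

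First, by Lemma~\ref{lem:matrix-optional-stoppong-sub}, since $\tau \le N$ is bounded, one has $\Exp \mY_\tau \matle \Exp \mY_N$. Decomposing both sides along the partition $\{E, E^c\}$ and noting that $\tau = N$ on $E^c$ (so that $\Exp(\mY_\tau \id_{E^c}) = \Exp(\mY_N \id_{E^c})$), we subtract this common term from both sides to obtain
\begin{equation}
\Exp(\mY_\tau \id_E) \matle \Exp(\mY_N \id_E).
\end{equation}
Here I am using that subtraction preserves $\matle$, and that all expectations of entries are finite since $\mY_n$ is $\cS_d^+$-valued and the relevant processes are bounded (or one can work entrywise as in the proof of Theorem~\ref{thm:matdoobconv}).

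Second, observe that $\mY_\tau \id_E$ is always in $\cS_d^+$ (it equals either $\mY_\tau \matge 0$ on $E$, or $0$ on $E^c$), and, crucially,
\begin{equation}
\{ \mY_\tau \id_E \nmatle \mA \} = E,
\end{equation}
because on $E$ we have $\mY_\tau \id_E = \mY_\tau \nmatle \mA$ by the very definition of $\tau = \nu$, whereas on $E^c$ we have $\mY_\tau \id_E = 0 \matle \mA$ since $\mA \in \cS_d^{++}$. Applying the matrix Markov inequality \eqref{eqn:mmi} to the random matrix $\mY_\tau \id_E$ with threshold $\mA$ thus yields
\begin{equation}
\Pr(E) = \Pr( \mY_\tau \id_E \nmatle \mA ) \le \tr\bigl( \Exp(\mY_\tau \id_E) \, \mA^{-1} \bigr).
\end{equation}

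Third, combining the two displays above and using that the map $\mM \mapsto \tr(\mM \mA^{-1})$ is monotone on $\cS_d^+$ (since $\tr(\mM \mA^{-1}) = \tr(\mA^{-1/2} \mM \mA^{-1/2})$ and trace is monotone on $\cS_d^+$), we obtain the first inequality of \eqref{eqn:matdoob}; the second then follows from $\mY_N \id_E \matle \mY_N$ (because $\mY_N \id_{E^c} \matge 0$) together with the same trace-monotonicity. The main conceptual subtlety, rather than an obstacle, is recognizing that multiplying a positive semidefinite random matrix by a scalar indicator preserves both positive semidefiniteness and the Loewner order structure, which lets the scalar-flavored localization argument translate verbatim to the matrix setting once Lemma~\ref{lem:matrix-optional-stoppong-sub} and \eqref{eqn:mmi} are in hand.
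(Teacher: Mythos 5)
Your proof is correct and follows essentially the same route as the paper's: the same stopping time $\tau = \inf\{n : \mY_n \nmatle \mA\} \wedge N$, the same identity $\{\mY_\tau \id_E \nmatle \mA\} = E$, optional stopping via \cref{lem:matrix-optional-stoppong-sub}, and the matrix Markov inequality. The only cosmetic difference is that you cancel the $E^c$-terms at the level of the Loewner order before taking traces, while the paper does the same cancellation after scalarizing with $\tr(\mA^{-1/2}\,\cdot\,\mA^{-1/2})$; the two are equivalent.
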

\begin{proof}
    Let $E$ be the event $\{ \exists n \le N, \mY_n \nmatle \mA  \}$. Define the stopping time 
    \begin{equation}
    \tau  = \inf \{n: \mY_n \nmatle \mA  \} \wedge N ,    
    \end{equation}
    so it satisfies $\tau \le N$ and
    \begin{equation}
        \{ \mY_\tau \id_{ E } \nmatle \mA  \} = E,\quad \mY_\tau \id_{\overline{E}}= \mY_N \id_{\overline{E}}.
    \end{equation}
    Thus by Lemma~\ref{lem:matrix-optional-stoppong-sub} and \eqref{eqn:mmi},
    \begin{align}
     &   \tr( \mA^{-1/2} (\Exp \mY_N
) \mA^{-1/2} ) \ge \tr( \mA^{-1/2} (\Exp \mY_\tau) \mA^{-1/2} ) 
\\
= & \tr( \mA^{-1/2}\Exp (\mY_\tau \id_E) \mA^{-1/2} ) +  \tr( \mA^{-1/2}\Exp (\mY_\tau \id_{\overline{E}})\mA^{-1/2})
\\
\ge & \Pr( \mY_\tau \id_{ E } \nmatle \mA ) +   \tr( \mA^{-1/2}\Exp (\mY_N \id_{\overline{E}})\mA^{-1/2}),
    \end{align}
   implying that $\Pr (E) \le \tr( \mA^{-1/2} \Exp (\mY_N \id_E )
 \mA^{-1/2} )$.
\end{proof}

The backward Ville's inequality comes from the idea that Doob's inequality
remains valid even without assuming that time starts at $n=0$ --- or that it starts \emph{at all}.  In the scalar regime, this idea proves the backward Ville's inequality \eqref{eqn:bvi} and proves that it is equivalent to the (forward) Doob's inequality (hence the ``duality'' between Ville's and Doob's inequality; see the second proof of Theorem 2 in \cite{manole2023martingale}).

This idea is valid for matrices as well; that is, Theorem~\ref{thm:matdoob} 
is applicable even if $\{ \mY_n \}$ and $\{ \cF_n \}$ are indexed on $n \in \{ \dots, -2, -1, 0 ,1, \dots, N \}$. We formalize this by recalling that a backward filtration  $\{ \cG_n \}$ is one that satisfies $ \cG_0 \supseteq \cG_1 \supseteq \dots $ and an adapted \emph{backward} submartingale $\{ \mY_n \}$ is one such that each $\mY_n$ is $\cG_{n}$-measurable and $\Exp[ \mY_n | \cG_{n+1} ] \matge \mY_{n+1}$, while we formally keep the time indices $n \ge 0$.  

\begin{theorem}[Matrix Backward Ville's Inequality]\label{thm:matbwvil}
    Let $\{ \mY_n \}_{n\ge 0}$ be an $\cS_d^+$-valued backward submartingale adapted to the backward filtration $\{ \cG_n \}_{n\ge 0}$. 
    For any $\mA \in \cS_d^{++}$,
    \begin{equation}\label{eqn:mbvi}
        \Pr( \exists n, \; \mY_n \nmatle \mA) \le \tr( (\Exp \mY_0) \mA^{-1} ).
    \end{equation}
\end{theorem}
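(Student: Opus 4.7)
The plan is to reduce the backward case to the forward matrix Doob inequality (\cref{thm:matdoob}) via a time-reversal on every finite horizon, and then take an upward monotone limit.

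Fix $N \in \mathbb N$. Define the reversed indices by setting $\mZ_k := \mY_{N-k}$ and $\cH_k := \cG_{N-k}$ for $k = 0,1,\dots, N$. Since $\{\cG_n\}$ is a decreasing (backward) filtration, $\{\cH_k\}_{k=0}^N$ is an ordinary (forward) filtration, and $\mZ_k$ is $\cH_k$-measurable because $\mY_{N-k}$ is $\cG_{N-k}$-measurable. The backward submartingale hypothesis $\Exp[\mY_n \mid \cG_{n+1}] \matge \mY_{n+1}$, applied at $n = N-k$, translates into $\Exp[\mZ_{k-1} \mid \cH_k] \matge \mZ_k$; equivalently (re-indexing), $\Exp[\mZ_k \mid \cH_{k-1}] \matge \mZ_{k-1}$, which is exactly the forward submartingale condition on $\{\cH_k\}$. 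Thus $\{\mZ_k\}_{k=0}^N$ is a finite-horizon $\cS_d^+$-valued forward submartingale.

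Now I apply \cref{thm:matdoob} to $\{\mZ_k\}_{k=0}^N$ with the same matrix $\mA \in \cS_d^{++}$. Since $\mZ_N = \mY_0$, this yields
\begin{equation}
    \Pr\bigl( \exists k \le N:\ \mZ_k \nmatle \mA \bigr) \le \tr\bigl( (\Exp \mZ_N) \mA^{-1} \bigr) = \tr\bigl( (\Exp \mY_0) \mA^{-1} \bigr).
\end{equation}
The event on the left is identical to $\{ \exists n \le N:\ \mY_n \nmatle \mA \}$, because the map $k \mapsto N - k$ is a bijection of $\{0,1,\dots,N\}$ onto itself. Hence for every $N$,
\begin{equation}
    \Pr\bigl( \exists n \le N:\ \mY_n \nmatle \mA \bigr) \le \tr\bigl( (\Exp \mY_0) \mA^{-1} \bigr).
\end{equation}

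Finally, the events $E_N := \{\exists n \le N:\ \mY_n \nmatle \mA\}$ are increasing in $N$ with $\bigcup_{N} E_N = \{\exists n:\ \mY_n \nmatle \mA\}$, so by upward continuity of probability,
\begin{equation}
    \Pr\bigl( \exists n:\ \mY_n \nmatle \mA \bigr) = \lim_{N\to\infty} \Pr(E_N) \le \tr\bigl( (\Exp \mY_0) \mA^{-1} \bigr),
\end{equation}
which is \eqref{eqn:mbvi}. The only point demanding a bit of care is the direction check in the time reversal: backward submartingale means $\Exp \mY_0 \matge \Exp \mY_1 \matge \dots$, so the starting matrix $\mY_0$ genuinely has the largest expected value, matching the role played by the terminal $\mZ_N$ in the forward Doob bound. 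No other step is delicate; in particular no new matrix-analytic facts are needed beyond \cref{thm:matdoob}.
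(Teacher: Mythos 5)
Your proposal is correct and takes essentially the same route as the paper: reverse time on each finite horizon $N$ (setting $\mZ_k = \mY_{N-k}$ on the forward filtration $\cH_k = \cG_{N-k}$), apply the forward matrix Doob inequality (\cref{thm:matdoob}) with $\Exp \mZ_N = \Exp \mY_0$, and let $N \to \infty$ by monotone continuity of probability. The only blemish is the intermediate line $\Exp[\mZ_{k-1} \mid \cH_k] \matge \mZ_k$, whose indices are slipped (as written it would force $\mZ_{k-1} \matge \mZ_k$), but the re-indexed form $\Exp[\mZ_k \mid \cH_{k-1}] \matge \mZ_{k-1}$ that your argument actually uses is the correct forward submartingale condition, so there is no genuine gap.
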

\begin{proof}
    Take an arbitrary $N \in \mathbb N$ and consider, indexed by $0\le n\le N$, the process $\mP_n = \mY_{N-n}$ on the (forward) filtration $\cF_n = \cG_{N-n}$. It is clear that $\{ \mP_n \}_{0\le n \le N}$ is a (forward) submartingale on $\{ \cF_n \}_{0\le n \le N}$, to which we can apply Theorem~\ref{thm:matdoob},
    \begin{equation}
         \Pr\left( \exists n \le N, \mP_n \nmatle \mA \right)  \le \tr( (\Exp \mP_N) \mA^{-1}  ).
    \end{equation}
    Substituting {$\mY_n$} back to the inequality above, we have,
    \begin{equation}
         \Pr\left( \exists n \le N, \mY_n \nmatle \mA \right)  \le \tr( (\Exp \mY_0) \mA^{-1}  ).
    \end{equation}
    The desired inequality {follows from the monotone convergence theorem:
   the random variable $\id_{\{ \exists n, \; \mY_n \nmatle \mA \}}$ is the pointwise limit of the increasing sequence of random variables $\{ \id_{\{ \exists n \le N, \; \mY_n \nmatle \mA \}} \}_{N \ge 1}$.
    }
\end{proof}

\subsection{Application: Concentration of Exchangeable Matrices}

An immediate application of Theorem~\ref{thm:matbwvil} is numerous matrix Chebyshev-type concentration bounds for \emph{exchangeable} random matrices. We recall that a sequence of random variables $X_1,X_2,\dots$ taking values in a Polish space is exchangeable if the joint distribution of $(X_{i_1},\dots, X_{i_k})$ equals that of $(X_{j_1},\dots, X_{j_k})$ for any distinct $i_1,\dots , i_k$ and distinct $j_1,\dots, j_k$. Repetitions of a single random variable, the P\'olya urn model, sampling from a finite set without replacement, as well as i.i.d.\ random variables are all examples of exchangeable sequences.

The scalar exchangeable Chebyshev inequality was recently proved by \citet[Theorem 2]{ramdas2023randomized}. 
Recall from Fact~\ref{fct:opmonofuns} that when $p \in [1,2]$, the function $\mX \mapsto \mX^p$ is operator convex within the domain $\cS_d^+$ (for $1\le p < 2$) or $\cS_d$ (for $p=2$) respectively,
\begin{equation}\label{eqn:power-op-cov}
   \frac 1 n ( \mX_1^p + \dots +  \mX_n^p ) \matge \left( \frac{\mX_1+\dots + \mX_n}{n} \right)^p.
\end{equation}
This gives rise to the following backward submartingale for exchangeable matrices.

\begin{lemma}\label{lem:pth-moment-bw}
    Let $p\in[1,2]$, and $\{ \mX_n \}$ be an exchangeable sequence of $\cS_d$-valued random matrices {with $\Exp \|\mX_1\|^p < \infty$}. Suppose $\mC \in \cS_d$ is deterministic, and either of the following holds:
    \begin{itemize}
        \item $p = 2$; or
        \item $\mX_n \matge \mC$ almost surely for all $n$.
    \end{itemize}
    Then, the process
\begin{equation}
    \mR_n = ( \overline{\mX}_n - \mC )^p
\end{equation}
 is a backward submartingale adapted to the exchangeable backward filtration
\begin{equation}
 \cE_n = \sigma( f(\mX_1, \mX_2,\dots) : f \text{ symmetric w.r.t.\ first $n$ arguments} ). 
\end{equation}
\end{lemma}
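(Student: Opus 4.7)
The plan is to check the two conditions for $\{\mR_n\}$ to be a backward submartingale on $\{\cE_n\}$: adaptedness, i.e.\ $\mR_n$ is $\cE_n$-measurable, and the reverse-conditional inequality $\Exp[\mR_n \mid \cE_{n+1}] \matge \mR_{n+1}$. Adaptedness is immediate: $\overline{\mX}_n$ is a symmetric function of $\mX_1,\dots,\mX_n$, and applying the matrix-to-matrix functional calculus map $\mY \mapsto (\mY - \mC)^p$ preserves measurability, so $\mR_n \in \sigma(\cE_n)$.

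For the submartingale inequality, the first subgoal is the exchangeability identity $\Exp[\overline{\mX}_n \mid \cE_{n+1}] = \overline{\mX}_{n+1}$. The sum $\overline{\mX}_{n+1}$ is symmetric in $\mX_1,\dots,\mX_{n+1}$, hence $\cE_{n+1}$-measurable, so it equals its own conditional expectation. By the definition of $\cE_{n+1}$ and exchangeability of $\{\mX_n\}$, for any transposition $\pi$ of two indices in $\{1,\dots,n+1\}$ the joint conditional law of $(\mX_1,\dots,\mX_{n+1})$ given $\cE_{n+1}$ is invariant, so $\Exp[\mX_i \mid \cE_{n+1}]$ is the same matrix for every $i \le n+1$. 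Averaging this common value over $i=1,\dots,n+1$ identifies it as $\overline{\mX}_{n+1}$, and averaging over $i=1,\dots,n$ yields $\Exp[\overline{\mX}_n \mid \cE_{n+1}] = \overline{\mX}_{n+1}$.

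The second subgoal is a conditional operator Jensen inequality for $\mY \mapsto (\mY - \mC)^p$. By \cref{fct:opmonofuns} and \eqref{eqn:power-op-cov}, this map is operator convex on $\cS_d$ when $p=2$, and on the shifted cone $\{\mY : \mY \matge \mC\}$ when $p \in [1,2)$. Under the second bullet, $\mX_i \matge \mC$ a.s.\ for every $i$ gives $\overline{\mX}_n \matge \mC$ a.s., so in both bulleted cases the random matrix $\overline{\mX}_n$ lies in the domain of operator convexity almost surely. To transport this to a conditional statement, I would use the test-vector sandwich: for every fixed $\vv \in \reals^d$, the scalar functional $g_\vv(\mY) := \vv\trsp (\mY - \mC)^p \vv$ is real-valued and convex on the same domain (operator convexity is exactly convexity of every such $g_\vv$). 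The scalar conditional Jensen inequality therefore gives
\begin{equation}
\vv\trsp \Exp\!\left[(\overline{\mX}_n - \mC)^p \,\middle|\, \cE_{n+1}\right] \vv \;\ge\; \vv\trsp \bigl(\Exp[\overline{\mX}_n \mid \cE_{n+1}] - \mC\bigr)^p \vv \;=\; \vv\trsp (\overline{\mX}_{n+1} - \mC)^p \vv,
\end{equation}
and since this holds for every $\vv$, the matrix inequality $\Exp[\mR_n \mid \cE_{n+1}] \matge \mR_{n+1}$ follows.

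The main obstacle is the conditional operator Jensen step, since the paper has only recorded operator convexity in the static (unconditional) form. I resolve this by the sandwich reduction above, which converts operator convexity into convexity of a family of scalar functionals and invokes the standard scalar conditional Jensen. A small sanity check: when $p=1$ the argument collapses to the identity $\Exp[\mR_n \mid \cE_{n+1}] = \mR_{n+1}$, so $\{\mR_n\}$ is in fact a backward martingale in that case, consistent with the submartingale conclusion.
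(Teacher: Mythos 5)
Your proof is correct, but it takes a genuinely different route from the paper's. The paper never needs a conditional Jensen inequality for matrices: it invokes the representation of $\Exp[\,\cdot\,|\cE_{n+1}]$ for an exchangeable sequence as the \emph{finite} average over all $(n+1)!$ permutations of the first $n+1$ indices (citing Klenke, Theorem 12.10), and then applies the deterministic finite-matrix Jensen inequality \eqref{eqn:power-op-cov} directly to that average, whose inner sum collapses to $\overline{\mX}_{n+1}-\mC$ by counting how often each index appears. You instead first prove the reverse-martingale identity $\Exp[\overline{\mX}_n \mid \cE_{n+1}] = \overline{\mX}_{n+1}$ (using the same exchangeability mechanics the paper's cited fact encodes) and then supply a genuinely \emph{conditional} operator Jensen inequality, justified by the quadratic-form characterization: operator convexity of $\mY\mapsto(\mY-\mC)^p$ is exactly convexity of every scalar functional $\vv\trsp(\mY-\mC)^p\vv$, to which scalar conditional Jensen applies. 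This is valid (the domain issue is handled since $\overline{\mX}_{n+1}\matge\mC$ under the second bullet, and $g_\vv$ is continuous, nonnegative and convex on the relevant closed convex set, so the affine-minorant version of conditional Jensen goes through), and your observation that $p=1$ yields a backward martingale is a nice consistency check. The trade-off: the paper's argument is shorter and entirely self-contained given \eqref{eqn:power-op-cov}, sidestepping conditional expectations of operator-convex images altogether; your argument is more modular and general, since the conditional operator Jensen step would apply verbatim in any setting where $\Exp[\overline{\mX}_n\mid\cG]$ is identified, not just under the permutation-average representation.
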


\begin{proof} Recalling that the conditional expectation w.r.t.\ the exchangeable $\sigma$-algebra $\cE_{n+1}$ equals the average over all $(n+1)!$ permutations of the indices $1,2,\dots,n+1$ (see e.g.\ \citet[Theorem 12.10]{klenke2013probability}), we have
    \begin{align}
        & \Exp[ \mR_n | \cE_{n+1} ] = \frac{1}{(n+1)!} \sum_{\sigma \in \mathrm S_{n+1}} \left(\frac{\mX_{\sigma(1)}+\dots + \mX_{\sigma(n)}}{n} - \mC \right)^p
        \\
       \stackrel{\eqref{eqn:power-op-cov}}\matge & \left( \frac{\sum_{\sigma \in \mathrm S_{n+1}} \left(\frac{\mX_{\sigma(1)}+\dots + \mX_{\sigma(n)}}{n} - \mC \right)  }{(n+1)!} \right)^p = \mR_{n+1},
    \end{align}
    concluding the proof that $\{ \mR_n \}$ is a backward submartingale.
\end{proof}

Applying the matrix backward Ville's inequality (Theorem~\ref{thm:matbwvil}) to this process $\{ \mR_n \}$, we immediately have the following two exchangeable matrix Chebyshev inequalities. First, the following holds when a finite variance exists.  Recall that the \emph{anticommutator} between two sqaure matrices $\mA, \mB$ is defined as $\{ \mA, \mB \} = \mA \mB+ \mB \mA$.

\begin{theorem}[Exchangeable Matrix Chebyshev Inequality]\label{thm:xmci}
    Let $\{ \mX_n \}$ be an exchangeable sequence of $\cS_d$-valued random matrices with mean $\mM$ and variance upper bound $\Exp (\mX_1 - \mM)^2 \matle \mV $. Then, for any $\mA \in \cS_d^{++}$,
    \begin{equation}\label{eqn:xmci}
         \Pr( \exists n, \;  \abs(\overline{\mX}_n - \mM)  \nmatle \mA) \le \tr( \mV \mA^{-2} ).
    \end{equation}
    Further, if $\Exp \{\mX_i - \mM, \mX_j - \mM\} = 0 $ for any $i \neq j$, we have for any fixed $N$,
    \begin{equation}\label{eqn:xmci2}
         \Pr( \exists n \ge N, \;  \abs(\overline{\mX}_n - \mM)  \nmatle \mA) \le N^{-1}\tr( \mV \mA^{-2} ).
    \end{equation}
\end{theorem}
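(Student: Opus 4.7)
The natural plan is to set up the backward submartingale $\{\mR_n\}$ prepared by \cref{lem:pth-moment-bw} with $p=2$ and $\mC=\mM$, and then feed it into the matrix backward Ville's inequality (\cref{thm:matbwvil}). Since we are taking $p=2$, the lemma applies without needing any lower bound on the $\mX_n$: the process
\begin{equation}
    \mR_n = (\overline\mX_n - \mM)^2
\end{equation}
is an $\cS_d^+$-valued backward submartingale adapted to the exchangeable backward filtration $\{\cE_n\}$, for $n \ge 1$.

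The key ``translation'' step is converting the event $\{\abs(\overline\mX_n - \mM)\nmatle \mA\}$ to an event on $\mR_n$. Since $\mA \in \cS_d^{++}$, \cref{fct:opmonofuns} (operator monotonicity of $\mX\mapsto \mX^{1/2}$) gives $\mY^2 \matle \mA^2 \Rightarrow \abs(\mY) = (\mY^2)^{1/2} \matle (\mA^2)^{1/2} = \mA$. Taking the contrapositive, $\abs(\overline\mX_n - \mM) \nmatle \mA$ implies $\mR_n = (\overline\mX_n - \mM)^2 \nmatle \mA^2$, so
\begin{equation}
    \{\exists n \ge 1 : \abs(\overline\mX_n - \mM)\nmatle \mA\} \subseteq \{\exists n \ge 1 : \mR_n \nmatle \mA^2\}.
\end{equation}
Applying \cref{thm:matbwvil} to $\{\mR_n\}_{n\ge 1}$ with threshold $\mA^2$ bounds the right-hand probability by $\tr((\Exp \mR_1)\mA^{-2}) = \tr(\Var(\mX_1)\mA^{-2}) \le \tr(\mV\mA^{-2})$, which is \eqref{eqn:xmci}.

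For the sharper bound \eqref{eqn:xmci2} under the anticommutator condition, I would re-run the same argument but start the backward Ville application at index $N$ rather than $1$. The subprocess $\{\mR_n\}_{n\ge N}$ is still a backward submartingale, and the same event inclusion holds restricted to $n \ge N$. The only extra ingredient is a computation identical to the one in \eqref{eqn:mat-expansion-variance}: expanding
\begin{equation}
    \Exp \mR_N = \frac{1}{N^2}\Exp\left(\sum_{i=1}^N (\mX_i - \mM)\right)^2 = \frac{1}{N^2}\sum_{i=1}^N \Var(\mX_i) + \frac{1}{2N^2}\sum_{i\ne j}\Exp\{\mX_i-\mM,\mX_j-\mM\},
\end{equation}
so the cross terms vanish under the hypothesis and $\Exp \mR_N \matle N^{-1}\mV$. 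Then \cref{thm:matbwvil} applied at starting index $N$ with threshold $\mA^2$ yields $\Pr(\exists n \ge N : \mR_n \nmatle \mA^2) \le \tr((\Exp\mR_N)\mA^{-2}) \le N^{-1}\tr(\mV\mA^{-2})$.

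The only place where a subtle obstacle could arise is in the event translation: one is tempted to write ``$\abs(\mY)\nmatle \mA \iff \mY^2 \nmatle \mA^2$,'' but the forward implication $\mY^2 \matle \mA^2 \Rightarrow \abs(\mY)\matle\mA$ relies on \emph{operator} monotonicity of the square root, not of the square (which fails, per the remark after \cref{fct:opmonofuns}). Thus the argument genuinely uses only one direction, and it is exactly the direction that gives the desired one-sided containment of events.
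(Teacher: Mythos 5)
Your proposal is correct and follows essentially the same route as the paper: apply \cref{lem:pth-moment-bw} with $p=2$ to get the backward submartingale $\mR_n=(\overline{\mX}_n-\mM)^2$, translate the event via operator monotonicity of the square root, invoke \cref{thm:matbwvil}, and for the second claim restart the backward process at index $N$ using the variance contraction of \cref{theorem:umci2} (which you simply re-derive inline). Your explicit remark about which direction of monotonicity is actually used matches the paper's (implicit) reasoning, so there is nothing to correct.
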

\begin{proof}
    Applying Theorem~\ref{thm:matbwvil} to $\mR_n = (\overline{\mX}_n - \mM)^2$, which is a backward submartingale according to Lemma~\ref{lem:pth-moment-bw},
    \begin{equation}
        \Pr( \exists n, \; ( \overline{\mX}_n - \mM )^2 \nmatle \mA^2) \le \tr( (\Exp \mR_1) \mA^{-2} ) \le \tr( \mV \mA^{-2} )  .
    \end{equation}
    Now we use the operator monotonicity of $\mX \mapsto  \mX^{1/2}$ (Fact~\ref{fct:opmonofuns}),
    \begin{equation}
        \Pr( \exists n, \;  \abs(\overline{\mX}_n - \mM)  \nmatle \mA)\le  \Pr( \exists n, \; ( \overline{\mX}_n - \mM )^2 \nmatle \mA^2),
    \end{equation}
    concluding the proof of \eqref{eqn:xmci}.
    To show \eqref{eqn:xmci2}, simply apply Theorem~\ref{thm:matbwvil} to the submartingale $\mR_n = (\overline{\mX_n} - \mM)^2$ with $n \ge N$, obtaining
    \begin{equation}
        \Pr( \exists n \ge N, \; ( \overline{\mX}_n - \mM )^2 \nmatle \mA^2) \le \tr( (\Exp \mR_N) \mA^{-2} ) \le N^{-1} \tr( \mV \mA^{-2} )  
    \end{equation}
    where the last step is due to Lemma~\ref{lem:contr-var}.
\end{proof}

We remark that the assumption $\Exp \{\mX_i - \mM, \mX_j - \mM\} = 0$ for $i\neq j$ is satisfied when $\mX_1, \mX_2, \dots$ are pairwise uncorrelated, which is in turn satisfied under martingale difference assumption (i.e., $\{ \mX_i \}$ is adapted to some filtration $\{ \cF_i \}$ with $\Exp(\mX_i|\cF_{i-1}) = \mM$ and $\Var(\mX_i|\cF_{i-1}) \matle \mV$), which is in turn implied if they are i.i.d.

The inequality \eqref{eqn:xmci} generalizes the matrix Chebyshev inequality by \citet[Theorem 14]{ahlswede2002strong} which states that $\Pr(  \abs(\overline{\mX} - \mM)  \nmatle \mA) \le \tr( \mV \mA^{-2} )$ for \emph{one} square-integrable matrix $\mX$ with variance $\mV$. It is only by assuming  $\Exp \{\mX_i - \mM, \mX_j - \mM\} = 0 $ that we may obtain a ``shrinking'' factor $N^{-1}$ on the tail probability that shows up in the right hand side of \eqref{eqn:xmci2}. Without this assumption, these exchangeable matrices can all be the repetition of a single random matrix $\mX_1 = \mX_2 = \dots$ in which case \eqref{eqn:xmci} is reduced to the matrix Chebyshev inequality by \citet[Theorem 14]{ahlswede2002strong}. On the other hand, with the stated additional assumption $\Exp \{\mX_i - \mM, \mX_j - \mM\} = 0 $,  the inequality \eqref{eqn:xmci2} generalizes the matrix ``weak law of large numbers'' by \citet[Corollary 16]{ahlswede2002strong} which we already quoted as \eqref{eqn:mci}, that $\Pr(  \abs(\overline{\mX}_N - \mM)  \nmatle \mA) \le N^{-1}\tr( \mV \mA^{-2} )$ for $N$ i.i.d.\ matrices $X_1,\dots, X_N$. Both generalizations, we note, offer the exact same concentration at the sample sizes (1 and $N$ respectively) compared to the original results by \cite{ahlswede2002strong}, while also extending infinitely into all future sample sizes. These generalizations generalize those in the scalar case by \citet[Section 2.2]{ramdas2023randomized}. 

We next present results for exchangeable matrices with heavier tails.

\begin{theorem}[Exchangeable Matrix $p$-Chebyshev Inequality]\label{thm:xmpci}  Let $\{ \mX_n \}$ be an exchangeable sequence of $\cS_d^+$-valued random matrices with raw $p$\textsuperscript{th} moment $\mV_p = \Exp \mX_1^p$, where $p\in[1,2]$. Then, for any $\mA \in \cS_d^{++}$,
    \begin{equation}
        \Pr( \exists n, \;  \overline{\mX}_n  \nmatle \mA) \le  \tr( \mV_p \mA^{-p} ).
    \end{equation}
\end{theorem}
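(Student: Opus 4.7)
The plan is to mimic the proof of \cref{thm:xmci}, substituting the quadratic backward submartingale with a $p$\textsuperscript{th}-power one. Since every $\mX_n$ is positive semidefinite, I can invoke \cref{lem:pth-moment-bw} with the constant shift $\mC = \mtx{0}$ (the second bullet of its hypothesis is met because $\mX_n \matge \mtx{0}$ almost surely). This yields that the process
\begin{equation}
    \mR_n = \overline{\mX}_n^{\,p}
\end{equation}
is a backward submartingale on the exchangeable backward filtration $\{ \cE_n \}$. Note that $\mR_1 = \mX_1^{\,p}$, so $\Exp \mR_1 = \mV_p$.

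Next, I would apply the matrix backward Ville's inequality (\cref{thm:matbwvil}) to $\{ \mR_n \}$ with threshold matrix $\mA^{p} \in \cS_d^{++}$, giving
\begin{equation}
    \Pr\bigl( \exists n, \; \overline{\mX}_n^{\,p} \nmatle \mA^{p} \bigr) \le \tr\bigl( (\Exp \mR_1) \mA^{-p} \bigr) = \tr( \mV_p \mA^{-p} ).
\end{equation}

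Finally, I would lift this back to a statement about $\overline{\mX}_n$ itself via operator monotonicity. For $p \in [1,2]$, the exponent $1/p \in [1/2, 1]$ lies in $(0,1)$ (or equals $1$ trivially), so \cref{fct:opmonofuns} gives that $\mZ \mapsto \mZ^{1/p}$ is operator monotone on $\cS_d^{+}$. Contrapositively, if $\overline{\mX}_n \nmatle \mA$ then $\overline{\mX}_n^{\,p} \nmatle \mA^{p}$ (since otherwise taking $1/p$ powers would yield $\overline{\mX}_n \matle \mA$). Therefore
\begin{equation}
    \Pr\bigl( \exists n, \; \overline{\mX}_n \nmatle \mA \bigr) \le \Pr\bigl( \exists n, \; \overline{\mX}_n^{\,p} \nmatle \mA^{p} \bigr) \le \tr( \mV_p \mA^{-p} ),
\end{equation}
which is the desired bound.

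I do not anticipate a serious obstacle here since all the heavy lifting has already been done: operator convexity of $\mX \mapsto \mX^p$ for $p \in [1,2]$ on $\cS_d^{+}$ produces the backward submartingale via \cref{lem:pth-moment-bw}, and operator monotonicity of $\mX \mapsto \mX^{1/p}$ handles the passage from $\overline{\mX}_n^{\,p}$ to $\overline{\mX}_n$. The only subtlety is the correct direction of the implication when translating between the two sides, which is handled by the contrapositive argument above.
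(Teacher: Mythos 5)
Your proposal is correct and follows essentially the same route as the paper's own proof: the backward submartingale $\mR_n = \overline{\mX}_n^{\,p}$ from \cref{lem:pth-moment-bw} with $\mC = \mtx{0}$, the matrix backward Ville's inequality (\cref{thm:matbwvil}) with threshold $\mA^p$, and the operator monotonicity of $\mZ \mapsto \mZ^{1/p}$ to pass back to $\overline{\mX}_n$. Your contrapositive step and the identification $\Exp \mR_1 = \mV_p$ are exactly the (implicit) details behind the paper's terser argument.
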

\begin{proof}
We define $ \mR_n = ( \overline{\mX}_n  )^p$, which, since $p \in [1,2]$, is a matrix backward submartingale due to Lemma~\ref{lem:pth-moment-bw}.
    Applying Theorem~\ref{thm:matbwvil} to $\{\mR_n\}$, 
    we see that
    \begin{equation}
        \Pr( \exists n, \; (\overline{\mX}_n) ^p \nmatle \mA^p) \le \tr( \mV_p \mA^{-p} ).
    \end{equation}
    Now use the operator monotonicity of $\mX \mapsto  \mX^{1/p}$ (Fact~\ref{fct:opmonofuns}),
    \begin{equation}
        \Pr( \exists n, \;  \overline{\mX}_n  \nmatle \mA) \le  \Pr( \exists n, \; (\overline{\mX}_n) ^p \nmatle \mA^p),
    \end{equation}
    concluding the proof.
\end{proof}

We remark that, from Theorem~\ref{thm:xmci} to Theorem~\ref{thm:xmpci}, we weaken the second moment to a $p$\textsuperscript{th} moment assumption, but in an imperfect manner as we obtain a concentration not around the mean $\mM$ but a Markov-type ``upper tail bound'' for $\cS_d^+$-valued matrices instead. {The raw $p$\textsuperscript{th} moment is used above. Also, $p > 2$ is not allowed. The reason is summarized in \cref{tab:power-convexity}. }
\begin{table}[!h]
    \centering
    \begin{tabular}{c|c|c|c|c}
       $p$  &   $x \mapsto x^p$, $x\ge 0$ &   $\mX \mapsto \mX^p$, $\mX \matge 0$ &   $x \mapsto |x|^p$, $x \in \mathbb R$ &   $\mX \mapsto (\abs\mX)^p$, $\mX \in \cS_d$  \\ \hline
      $[1,2)$    & convex & convex & convex & non-convex \\
      $2$  & convex & convex & convex & convex    \\
      $(2,\infty)$ & convex & non-convex & convex & non-convex
    \end{tabular}
    \caption{Convexity of scalar-to-scalar vs.\ matrix-to-matrix power maps.}
    \label{tab:power-convexity}
\end{table}

Nonetheless, thanks to Fact~\ref{fct:tracejen}, we can resort to trace scalarization to obtain the following spectral and trace inequalities that hold for all $p \ge 1$.

\begin{theorem}[Exchangeable Matrix $p$-Chebyshev Trace Inequality]\label{thm:p-cheb-trace}  Let $\{ \mX_n \}$ be an exchangeable sequence of $\cS_d$-valued random matrices with mean $\mM$ and central $p$\textsuperscript{th} moment $\mV_p = \Exp (\abs(\mX_1-\mM))^p$, where $p \ge 1$. Then, for any $a > 0$,
    \begin{equation}
    \Pr( \exists n, \;  \|\overline{\mX}_n - \mM\| \ge a )  \le     \Pr\left( \exists n, \;  \tr \left( \abs(\overline{\mX}_n - \mM)^p \right) \ge a^p \right) \le  a^{-p} \tr \mV_p .
    \end{equation}
\end{theorem}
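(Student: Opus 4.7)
My plan is to chain two inequalities: a purely deterministic dominance of the spectral-norm event by the trace event, followed by an application of the scalar backward Ville's inequality to the real-valued process $r_n := \tr(\abs(\overline{\mX}_n - \mM)^p)$ that the paragraph preceding the theorem already singles out.

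For the first inequality I would fix $n$ and set $\mY = \abs(\overline{\mX}_n - \mM) \in \cS_d^{+}$. Then $\|\overline{\mX}_n - \mM\|_{\op} = \lambda_{\max}(\mY)$, and since the scalar map $t \mapsto t^p$ is monotone nondecreasing on $[0,\infty)$, functional calculus gives $\lambda_{\max}(\mY^p) = \lambda_{\max}(\mY)^p$. Combined with the elementary bound $\lambda_{\max} \le \tr$ on $\cS_d^{+}$ (recalled in the preliminaries), this produces $\|\overline{\mX}_n - \mM\|_{\op}^p \le \tr(\abs(\overline{\mX}_n - \mM)^p) = r_n$. Pointwise dominance of the events and a union over $n$ then yields the first inequality.

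For the second inequality I need to verify that $\{r_n\}_{n\ge 1}$ is a nonnegative backward submartingale on the exchangeable backward filtration $\{\cE_n\}$ from \cref{lem:pth-moment-bw}, and then invoke the scalar ($d=1$) specialization of \cref{thm:matbwvil}. To check the submartingale property I would mirror the computation of \cref{lem:pth-moment-bw}: conditioning on $\cE_{n+1}$ averages over the $(n+1)!$ permutations of the first $n+1$ indices, and since $\overline{\mX}_{n+1} - \mM$ is exactly the uniform convex combination of the permuted partial averages $\overline{\mX}_n^{\sigma} - \mM$, \cref{fct:tracejen} applied to the convex map $t\mapsto |t|^p$ gives
\begin{equation}
r_{n+1} = \tr\bigl(\abs(\overline{\mX}_{n+1} - \mM)^p\bigr) \le \frac{1}{(n+1)!}\sum_{\sigma \in S_{n+1}} \tr\bigl(\abs(\overline{\mX}_n^{\sigma} - \mM)^p\bigr) = \Exp[r_n \mid \cE_{n+1}].
\end{equation}
The scalar backward Ville's inequality then yields $\Pr(\exists n \ge 1,\ r_n \ge a^p) \le a^{-p}\Exp r_1 = a^{-p}\tr(\Exp \abs(\mX_1 - \mM)^p) = a^{-p}\tr \mV_p$, where the penultimate step exchanges trace and expectation (trace being linear and continuous).

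The only subtle point — and the reason the theorem is phrased with a trace on the left rather than in Loewner form — is that one cannot appeal to operator convexity of $\mX \mapsto \abs(\mX)^p$, which fails for $p \in [1,2)$. This is precisely why the process $r_n$ must be \emph{scalarized} by the trace before any submartingale reasoning takes place: \cref{fct:tracejen} converts scalar convexity of $t\mapsto |t|^p$ directly into trace convexity of $\mX \mapsto \tr(\abs(\mX)^p)$ without requiring operator convexity of the underlying matrix map. Once this scalarization is in place, the proof is a routine transcription of the argument behind \cref{lem:pth-moment-bw} and \cref{thm:matbwvil} into the scalar regime.
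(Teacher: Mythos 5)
Your proposal is correct and follows essentially the same route as the paper: scalarize via \cref{fct:tracejen} applied to the convex map $t\mapsto|t|^p$ so that $r_n=\tr(\abs(\overline{\mX}_n-\mM)^p)$ becomes a real-valued backward submartingale on the exchangeable backward filtration, then apply the scalar ($d=1$) backward Ville's inequality, with the first inequality following from $\lambda_{\max}\le\tr$ on $\cS_d^+$. Your closing remark about why operator convexity cannot be used for $p\in[1,2)$ and why the trace must appear is exactly the point the paper makes in the paragraph preceding the theorem.
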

The inequality above, which we defer its straightforward proof to \cref{sec:pfbw}, differs from Theorem~\ref{thm:xmpci} in that the concentration is now around the mean $\mM$.

\section{Remarks and Extensions}\label{sec:dis}

\subsection{Tightness of Matrix Markov's and Ville's Inequalities}\label{sec:tight}

Recall that a PSD supermartingale $\{ \mY_n \}$ can be converted into a scalar nonnegative supermartingale via either taking the trace $\{ \tr \mY_n \}$ or a quadratic form $\{ \vv\trsp \mY_n \vv \}$.
We thus discuss the tightness of \eqref{eqn:mvi-new} in this subsection in comparison to the scalar \eqref{eqn:vi} on these transformations. Since \eqref{eqn:mvi-new} is proved by applying \eqref{eqn:mmi} on a stopped random matrix, it suffices to compare $\eqref{eqn:mmi}$ versus \eqref{eqn:mi} on a PSD random matrix $\mX$. Without loss of generality, we assume that $\Exp\mX = \mI$. Denote by $\cC_{\tr}(\alpha)$ the concentration region by applying \eqref{eqn:mi} to $\tr \mX$:
\begin{equation}
    \cC_{\tr}(\alpha) = \{ \mX \in \cS_d^{+} : \tr \mX \le d/\alpha \};
\end{equation}
by $\cC_{\vv}(\alpha)$  the concentration region by applying \eqref{eqn:mi} to $\vv\trsp \mX \vv$ where $\|\vv\| = 1$:
\begin{equation}
    \cC_{\vv}(\alpha) = \{ \mX \in \cS_d^{+} : \vv\trsp \mX \vv \le 1/\alpha \}.
\end{equation}
Let $\cA_\alpha$ be the following set of matrices:
\begin{equation}
    \cA_{\alpha} = \{ \mA \in \cS_d^{++} :  \tr (\mA^{-1}) = \alpha \},
\end{equation}
and we denote by $\cC(\mA)$ the concentration region by applying \eqref{eqn:mmi} to $\mX$ with threshold $\mA \in  \cA_{\alpha} $:
\begin{equation}
    \cC({\mA}) = \{ \mX \in \cS_d^{+} :  \mX \matle \mA \}.
\end{equation}
Following e.g.\ \cite{MINSKER2017111}, we denote by $\er(\mA)$ the effective rank $\tr \mA /\|\mA\|  \in [1,d]$ of a PD matrix.
We can establish the following relations among these concentration regions via elementary linear algebra.
\begin{proposition}\label{prop:tight} Suppose $\alpha \in (0,1)$ and $d \ge 2$.
    \begin{enumerate} 
        \item $\cC_{\tr}(\alpha)  \subsetneq \cC((d/\alpha)\mI)$;
        \item If $\mA \in \cA_{\alpha}$ and $\mA \neq (d/\alpha)\mI$, then $\cC_{\tr}(\alpha)  \nsubseteq \cC(\mA)$ and $\cC(\mA)  \nsubseteq \cC_{\tr}(\alpha)$;
        \item If $\mA \in \cA_{\alpha}$, then $ \cC_{\tr}(\alpha') \subseteq \cC(\mA) $ if and only if $\alpha' \ge  \frac{d}{\er(\mA^{-1})} \cdot \alpha $;
        \item If $\mA \in \cA_{\alpha}$, then $\cC(\mA) \subseteq \cC_{\tr}(\alpha')$ if and only if $\alpha' \le d(\tr \mA)^{-1}$ {$\in(0, \frac{\alpha}{\er(\mA)} ]$};
        \item For any $\vv$ such that $\| \vv \| = 1$ and any $\alpha' > \alpha$, there always exists an $\mA \in \cA_{\alpha'}$ such that $\cC(\mA) \subsetneq \cC_{\vv}(\alpha)$;
        \item For any $\mA \in \cA_{\alpha}$ and any $\alpha' \in (0,1)$, there exists no $\vv$ such that $\| \vv \| = 1$ and $\cC_{\vv}(\alpha') \subseteq \cC(\mA)$.
    \end{enumerate}
\end{proposition}

Let us briefly unpack these statements above. The first two state that \eqref{eqn:mmi} and trace-\eqref{eqn:mi} are mutually incomparable unless one takes the threshold matrix $\mA$ to be a multiple of $\mI$, in which case \eqref{eqn:mmi} is weaker than trace-\eqref{eqn:mi}. Statements 3 and 4 indicate that \eqref{eqn:mmi} and trace-\eqref{eqn:mi} can simulate each other only by increasing the error probability $\alpha$ {substantially}. Statements 5 and 6 imply respectively that, while one can always simulate quadratic form-\eqref{eqn:mi} with \eqref{eqn:mmi} by \emph{infinitesimally} increasing the error probability $\alpha$, the converse is never true. Proposition~\ref{prop:tight} is proved in \cref{sec:pftight}.

Similar reasoning holds for \eqref{eqn:mvi-new} versus \eqref{eqn:vi} as well because the concentration regions, while holding time-uniformly or at a stopping time, remain in the same forms of $\cC_{\tr}(\alpha)$, $\cC_{\vv}(\alpha)$, and $\cC(\mA)$. We shall spell out the statistical consequences of these linear algebra results in \cref{sec:exp}.

\subsection{Stepwise Scalarization}\label{sec:sclr}



As we mentioned earlier in \cref{sec:examples-mean},
\citet[Section 6.5]{howard2020time} prove, under different conditions, numerous inequalities for $\cS_d$-valued martingale difference $\{ \mZ_n \}$ in the style of
\begin{equation}\label{eqn:howard-general}
   \Exp (\exp( \mZ_n -  \mC_n) | \cF_{n-1}) \matle \exp(  \mC_n'  ),
\end{equation}
for some respectively $\cF_{n}$-, $\cF_{n-1}$-measurable symmetric matrices $\mC_n, \mC_n'$.

At this step, our methods in \cref{sec:mtg} and that of \cite{howard2020time} diverged.
We have been able to directly construct the matrix-valued supermartingale
\begin{equation}\label{eqn:howard-gen-mat}
\exp\left(-\frac{\mC_n'}{2}\right)  \exp\left( \frac{ \mZ_n -  \mC_n}{2}\right) \dots   \exp\left( \frac{ \mZ_n -  \mC_n}{2}\right) \exp\left(-\frac{\mC_n'}{2}\right) 
\end{equation}
via Lemma~\ref{lem:mateval}, leading to a computable sequential test by our new \eqref{eqn:mvi-new}. \cite{howard2020time}, on the other hand, construct \emph{scalar} e-processes from the condition \eqref{eqn:howard-general} by applying a trace scalarization technique. Let us rephrase their ``master lemma" behind this construction.

\begin{lemma}[Lemma 4 in \cite{howard2020time}, rephrased and generalized]\label{lem:howardlieblemma} Let $\{ \mZ_n \}$ be an $\cS_d$-valued martingale difference sequence adapted to $\{ \cF_n \}$. Let $\{\mC_n\}$ be an $\cS_d$-valued adapted process, $\{\mC_n'\}$ be an $\cS_d$-valued predictable process, w.r.t.\ the same filtration. If \eqref{eqn:howard-general} holds for all $n$, then, the process
\begin{equation}\label{eqn:howard-gen-nsm}
    L_n = \tr \exp \left( \sum_{i=1}^n \mZ_i -  \sum_{i=1}^n  ( \mC_i + \mC_i' )  \right)
\end{equation}
is a supermartingale. Further,
\begin{equation}\label{eqn:howard-gen-e-proc}
   L_n \ge \exp \left( \lambda_{\max}\left(\sum_{i=1}^n \mZ_i \right)-   \lambda_{\max}\left( \sum_{i=1}^n ( \mC_i + \mC_i' ) \right)  \right).
\end{equation}
\end{lemma}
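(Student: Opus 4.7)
The plan is to derive the supermartingale property from Lieb's concavity theorem combined with operator and trace monotonicity, and to derive the lower bound \eqref{eqn:howard-gen-e-proc} from the spectral mapping property and Weyl's inequality.

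For the supermartingale claim, I would single out the $n$\textsuperscript{th} increment. Because $\gamma_n$ and $\mC_n'$ are predictable, the matrix
\begin{equation}
    \mH \,:=\, \sum_{i=1}^{n-1}\gamma_i\mZ_i \;-\; \sum_{i=1}^{n-1}\psi(\gamma_i)(\mC_i+\mC_i') \;-\; \psi(\gamma_n)\mC_n'
\end{equation}
is $\cF_{n-1}$-measurable, and we can rewrite $L_n = \tr\exp(\mH + \log \mX_n)$ where $\mX_n := \exp(\gamma_n\mZ_n - \psi(\gamma_n)\mC_n) \in \cS_d^{++}$. By Lieb's concavity theorem, the map $\mX \mapsto \tr\exp(\mH + \log \mX)$ is concave on $\cS_d^{++}$ for each fixed $\mH \in \cS_d$, so the conditional Jensen inequality gives
\begin{equation}
    \Exp[L_n \mid \cF_{n-1}] \;\le\; \tr\exp\!\left(\mH + \log \Exp[\mX_n \mid \cF_{n-1}]\right).
\end{equation}
The hypothesis \eqref{eqn:howard-general} is exactly $\Exp[\mX_n \mid \cF_{n-1}] \matle \exp(\psi(\gamma_n)\mC_n')$. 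Operator monotonicity of the matrix logarithm (\cref{fct:opmonofuns}) then yields $\log\Exp[\mX_n\mid\cF_{n-1}] \matle \psi(\gamma_n)\mC_n'$; adding $\mH$ preserves the Loewner order, and trace monotonicity applied to the increasing real function $\exp$ (\cref{fct:tracemono}) gives
\begin{equation}
    \Exp[L_n \mid \cF_{n-1}] \;\le\; \tr\exp\!\left(\mH + \psi(\gamma_n)\mC_n'\right) \;=\; L_{n-1},
\end{equation}
since $\mH + \psi(\gamma_n)\mC_n'$ is exactly the argument of $\exp$ inside $L_{n-1}$.

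For the lower bound \eqref{eqn:howard-gen-e-proc}, write $\mA_n := \sum_{i=1}^n \gamma_i\mZ_i$ and $\mB_n := \sum_{i=1}^n \psi(\gamma_i)(\mC_i+\mC_i')$, so that $L_n = \tr\exp(\mA_n - \mB_n)$. Since $\exp(\mA_n-\mB_n) \in \cS_d^{++}$, the trace dominates the largest eigenvalue, and the spectral mapping property gives $L_n \ge \lambda_{\max}(\exp(\mA_n-\mB_n)) = \exp(\lambda_{\max}(\mA_n-\mB_n))$. Finally, Weyl's inequality applied to the decomposition $\mA_n = (\mA_n-\mB_n) + \mB_n$ yields $\lambda_{\max}(\mA_n) \le \lambda_{\max}(\mA_n - \mB_n) + \lambda_{\max}(\mB_n)$, so $\lambda_{\max}(\mA_n - \mB_n) \ge \lambda_{\max}(\mA_n) - \lambda_{\max}(\mB_n)$, and the claim follows by monotonicity of the scalar $\exp$.

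The main obstacle is the invocation of Lieb's concavity theorem, a genuinely deep result of matrix analysis; indeed, it is precisely the reason why the scalar route is available here while a matrix-valued supermartingale constructed directly from \eqref{eqn:howard-general} does not appear feasible. Everything else is bookkeeping that carefully respects the distinction between the predictable quantities $\gamma_n, \mC_n'$ (which may be pulled into the fixed $\mH$) and the merely adapted $\mZ_n, \mC_n$ (which must remain inside the $\log \mX_n$ slot of Lieb's map).
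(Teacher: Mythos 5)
Your proposal is correct and follows essentially the same route as the paper: Lieb's concavity theorem plus conditional Jensen, operator monotonicity of $\log$ applied to \eqref{eqn:howard-general}, and trace monotonicity of $\exp$, with the predictable terms absorbed into the fixed $\mH$ exactly as in the paper's argument. The only cosmetic difference is in the lower bound, where you invoke $\tr \ge \lambda_{\max}$ and Weyl's subadditivity of $\lambda_{\max}$, while the paper first replaces the subtracted sum by $\lambda_{\max}(\cdot)\mI$ and then uses trace monotonicity—these are interchangeable elementary steps yielding the same conclusion.
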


The proof of this lemma can be found in \cref{sec:pfhwl}. 
We have numerous remarks regarding this lemma. 

First, the upper bounded expression \eqref{eqn:howard-gen-e-proc} is an instance of e-processes (if divided by $L_0 = d$), introduced earlier in \cref{sec:mat-e-proc}. The larger process
\eqref{eqn:howard-gen-nsm}, a supermartingale, can be used for a sequential test; whereas the smaller process \eqref{eqn:howard-gen-e-proc}, an e-process, leads to closed-form spectral time-uniform concentration inequalities (i.e.\ confidence sequences), but as a test process it is less powerful than \eqref{eqn:howard-gen-nsm}.

Second, the divergence from the common assumption \eqref{eqn:howard-general} to the matrix-valued \eqref{eqn:howard-gen-mat} and scalar-valued \eqref{eqn:howard-gen-nsm} test supermartingale  indicates that for \emph{some} testing problems, two different routes can be pursued: our ``matrix-native'' approach, and the scalarization approach to the test process.
Examples in \cref{sec:examples-mean} that fit in the assumption \eqref{eqn:howard-general} include Examples~\ref{ex:sn} and~\ref{ex:sym} which \citet[Section 6.5]{howard2020time} originally prove, but also Example~\ref{ex:emp-bern-ms}:
To see that Example~\ref{ex:emp-bern-ms} is in the form of \eqref{eqn:howard-general}, 
\begin{equation}
   \exp( \gamma_n(\mX_n - \widehat{\mX}_n)) =  \exp( \underbrace{
 \gamma_n(\mX_n - \mM_n)}_{:= \mZ_n} + \underbrace{\gamma_n(\mM_n - \widehat{\mX}_n)}_{:= \mC_n}  ),
\end{equation}
which is exactly what is done by \citet[Theorem 4.2]{wang2024sharp} to derive a sharp closed-form matrix empirical Bernstein inequality. One of the crucial ways the two approaches differ is that with the scalar-valued test process \eqref{eqn:howard-gen-nsm}, one rejects the null when the process surpasses $d/\alpha$; whereas with the matrix-valued test process \eqref{eqn:howard-gen-mat}, one rejects when the process $\nmatle \mA$ where $\mA$ is a matrix chosen \emph{freely} beforehand, only subject to the ``size constraint'' that $\tr(\mA^{-1}) = \alpha$. Indeed, the extra degrees of freedom that come with $\mA$ play a crucial role as we compare the two approaches empirically with the {finite-variance} self-normalizing example (Example~\ref{ex:sn}) in \cref{sec:exp}.

Third, the assumption \eqref{eqn:howard-general} for scalarization being \emph{more restrictive} than our Lemma~\ref{lem:mateval}, for some testing problems our concept of matrix-valued test supermartingales is unavoidable. These include Example~\ref{ex:bet} and~\ref{ex:cat}, to which the stepwise scalarization Lemma~\ref{lem:howardlieblemma} is not applicable. Their one-step condition $\Exp(\mE_n |\cF_{n-1}) \matle \mF_n$ leads naturally to a matrix-valued test supermartingale, and scalar-valued test supermartingales might be obtained by taking the trace or quadratic form \emph{after} the formation of the matrix-valued one, which is not ideal according to \cref{sec:tight}. 

\subsection{Randomized Ahlswede-Winter Bounds}\label{sec:randfix}

The uniformly randomized Ville's inequality \eqref{eqn:uvi} by \cite{ramdas2023randomized} is obtained by applying the uniformly randomized Markov's inequality \citep[Theorem 1.2]{ramdas2023randomized}
\begin{equation}\label{eqn:umi}
    \Pr(X \ge U \sdiv) \le (\Exp X) a^{-1}
\end{equation}
where $U \sim \uzeroone$ independent from $X$. Here, we state a matrix version of \eqref{eqn:umi}, or a randomized version of the matrix Markov inequality \eqref{eqn:mmi}. 

\begin{theorem}[Uniformly Randomized Matrix Markov's Inequality]\label{thm:ur-matmarkov-meta}
     Let $\mX$ be a random matrix taking values in $\cX \subseteq \cS_d^{+}$, and $U\sim \uzeroone$ independent from $\mX$. For any $\mdiv \in \cS_d^{++}$,
     \begin{equation}\label{eqn:ummi}
         \Pr( \mX  \nmatle   U \mdiv) \le \tr( (\Exp \mX) \mdiv^{-1}).
     \end{equation}
\end{theorem}

It is clear that \eqref{eqn:ummi} generalizes the scalar \eqref{eqn:umi} and tightens \eqref{eqn:mmi}. The randomization factor $U$, we note, can be any random variable that is stochastically larger than $\uzeroone$ (``super-uniform''), that is,  $\Pr(U \le x) \le x$ for all $x \ge 0$. More generally, 
we shall present and prove a stronger version of Theorem~\ref{thm:ur-matmarkov-meta} in \cref{sec:ummi-full}, where the scalar randomization factor $U$ is further replaced by a PSD random matrix whose distribution belongs to a class of ``super-uniform'' measures on $\cS_d^{+}$. The generalized version of Theorem~\ref{thm:ur-matmarkov-meta} in \cref{sec:ummi-full} also reveals the equality condition for \eqref{eqn:mmi}, randomized or not.


Theorem~\ref{thm:ur-matmarkov-meta} implies that various matrix concentration inequalities by \cite{ahlswede2002strong} can be tightened via a randomization factor as well. For example, the Chebyshev inequality \eqref{eqn:mci} for $n$ i.i.d.\ matrices can be randomized as follows.

\begin{corollary}[Uniformly Randomized Matrix Chebyshev's Inequality]\label{cor:umci2}
    Let $\mX_1, \dots, \mX_n$ be random matrices taking values in $\cS_d$ with common mean matrix $\Exp \mX_i = \mM$ and variance matrix upper bound $\Var (\mX_i) \matle \mV$, such that $\Exp \{\mX_i - \mM, \mX_j - \mM\} = 0 $ for any $i\neq j$; 
    $U$ a super-uniform random scalar independent from them all.
    Then, letting $\overline{\mX}_n = \frac{1}{n}(\mX_1 + \dots + \mX_n)$, for any $\mdiv \in \cS_d^{++}$.
     \begin{equation}\label{eqn:umci}
        \Pr( \abs(\overline{\mX}_n - \mM) \nmatle 
\sqrt{U} \mdiv ) \le n^{-1} \tr( \mV \mdiv^{-2} ),
    \end{equation}
\end{corollary}

We omit the proof since it is a straightforward application of Theorem~\ref{thm:ur-matmarkov-meta} to the moment bound Lemma~\ref{lem:contr-var}. We next discuss the implication for \eqref{eqn:mvi-new} as the randomization here raises some subtleties in its two equivalent forms, the time-uniform one and the stopped one.

\begin{theorem}[Uniformly Randomized Matrix Ville's Inequality]\label{thm:matvil1-u}
    Let $\{ \mY_n \}$ be an $\cS_d^+$-valued supermartingale and $\tau$ be a stopping time on $\{ \cF_n \}$. 
    Let $U$ be a super-uniform random scalar
    independent from $\cF_\infty$. Then, for any $\mA \in \cS_d^{++}$,
    \begin{equation}\label{eqn:umvi}
        \Pr( \mY_\tau \nmatle  U \mA) \le \tr( (\Exp \mY_0) \mA^{-1} );
    \end{equation}
    and
    \begin{equation}\label{eqn:umvi-until-stop}
        \Pr( \exists n < \tau, \; \mY_n \nmatle \mA \text{ or } \mY_\tau \nmatle  U \mA) \le \tr( (\Exp \mY_0) \mA^{-1} ).
    \end{equation}
\end{theorem}
\begin{proof}
    First, $\Exp \mY_{\tau} \matle \Exp \mY_0$ due to Part 3 of Theorem~\ref{thm:matrix-optional-stopping}. We apply Theorem~\ref{thm:ur-matmarkov-meta} to obtain the first inequality. For the second inequality, we define the stopping time $\nu := \inf\{ n: \mY_n \nmatle \mA  \}$ and apply \eqref{eqn:umvi} to the stopping time $\nu \wedge \tau$.
\end{proof}

These two inequalities above, we note, generalize respectively the stopped and time-uniform statements of \eqref{eqn:mvi-new}, just as the two inequalities by \citet[Theorem 4.1, Corollary 4.1.1]{ramdas2023randomized} generalize the two forms of the scalar \eqref{eqn:vi}. It is important to note that while it is tempting to hope for the following randomized time-uniform \eqref{eqn:mvi-new}:
\begin{equation}
    \Pr( \exists n, \; \mY_n \nmatle U \mA) \le \tr( (\Exp \mY_0) \mA^{-1} ),
\end{equation}
the temptation must be resisted as it is known to \emph{fail} in the scalar case; see the discussion before Corollary 4.1.1 by \cite{ramdas2023randomized}. Theorem~\ref{thm:matvil1} provides the following improvement over \cref{alg:seqtest} where the user may prespecify a stopping criterion $\tau$ beforehand. As is already stated in \cref{alg:seqtest}, whenever $\mL \mR \nmatle \mA$, $\cH_0$ is rejected; what is now different is that when the stopping criterion $\tau$ is met and $\cH_0$ is not yet rejected, the user can draw a $U \sim \uzeroone$ to see if $\mL \mR \nmatle U\mA$, and reject $\cH_0$ when this happens.

\section{Experiments: To Scalarize or Not to Scalarize}\label{sec:exp}

We now return to our discussion in \cref{sec:sclr} that from the condition
\begin{equation}
    \Exp (\exp( \mZ_n -  \mC_n) | \cF_{n-1}) \matle \exp(  \mC_n'  )
\end{equation}
that holds under the null, one can either use our approach to construct a matrix test supermartingale via Lemma~\ref{lem:mateval}, or use the stepwise scalarization technique formalized as Lemma~\ref{lem:howardlieblemma} to construct a scalar test supermartingale. We now compare these two empirically.


We conduct the comparison with the {finite-variance} self-normalized Example~\ref{ex:sn}, and consider the two corresponding sequential test algorithms, \tm\ and \ts. More specifically, both receive the stream of i.i.d\ matrices $\mX_1,\mX_2,\dots$ in $\cS_d$ with mean $\mM$ and variance upper bound $\mI$ and sequentially test the null hypothesis $H_0: \mM = 0$.
\tm\ computes the matrix-valued process
\begin{multline}
    \mY_n^{\textsf{SN}} =   \exp \left( -\frac{\gamma_1^2 + \dots + \gamma_n^2}{3} \right) \cdot  \exp\left(\frac{\gamma_1 \mX_1}{2} - \frac{\gamma_1^2 \mX_n ^2}{12}\right) \dots     \exp\left(\frac{\gamma_n \mX_n}{2} - \frac{\gamma_n^2 \mX_n ^2}{12}\right)
    \\
     \exp\left(\frac{\gamma_n \mX_n}{2} - \frac{\gamma_n^2 \mX_n ^2}{12}\right)   \dots  \exp\left(\frac{\gamma_1 \mX_1}{2} - \frac{\gamma_1^2 \mX_1 ^2}{12}\right) ,
\end{multline}
rejecting when $ \mY_n^{\textsf{SN}} \nmatle \mA$ for some threshold matrix $\mA \in \cS_d^{++}$ such that $\tr (\mA^{-1}) = \alpha$ picked beforehand; \ts\ computes the scalar-valued process
\begin{equation}
     L_n^{\textsf{SN}} =   \exp \left( -\frac{\gamma_1^2 + \dots + \gamma_n^2}{3} \right) \cdot  \tr \exp \left( \sum_{i=1}^n\gamma_i \mX_i-  \sum_{i=1}^n \frac{\gamma_i^2 \mX_i^2 }{6}    \right),
\end{equation}
rejecting when $ L_n^{\textsf{SN}} \ge d/\alpha$. Note that $ L_n^{\textsf{SN}}$ does \emph{not} equal $\tr (\mY_n^{\textsf{SN}})$ in general, an important point that we shall come to later.

Under the null,  both \tm\ and \ts\ safely control the type 1 error (the probability of falsely rejecting the null) under the predefined level $\alpha$ due to \eqref{eqn:mvi-new} and \eqref{eqn:vi}. For power comparison, we shall first discuss the case when only $n = 1$ observation is revealed, and then
move on to the case with an indefinite sequence of observations, with examples where \tm\ outpowers \ts\ under decreasingly restrictive conditions and finally no restriction at all. {We shall specify the choice of the $\{ \gamma_n \}$ sequence as needed.}

\subsection{Comparison with a Single Observation}

When only $n = 1$ observation is revealed, $  L_1^{\textsf{SN}} = \tr  (\mY_1^{\textsf{SN}})$. We shall see that the comparison depends heavily on the rejection threshold matrix $\mA$ that \tm\ picks beforehand, as rejection happens on the complements of the ``concentration regions'' discussed in \cref{sec:tight}. The following are direct corollaries of Proposition~\ref{prop:tight}.

\begin{proposition}\label{prop:naivematrix}
    Suppose \emtm\ picks $\mA = (d /\alpha )\mI$. Then, if \emtm\ rejects at $n=1$, \emts\ rejects as well. In this case \emts\ is at least as powerful as \emtm. 
\end{proposition}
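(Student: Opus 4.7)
The plan is to reduce the comparison to a one-line inequality between $\tr$ and $\lambda_{\max}$ on a positive semidefinite matrix, exploiting the fact that at $n=1$ the matrix statistic $\mY_1^{\textsf{SN}}$ and the scalar statistic $L_1^{\textsf{SN}}$ coincide up to the trace functional, and that the threshold chosen by \emtm\ is a scalar multiple of the identity.

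First, I would unpack the two statistics at $n=1$. Using the formulas given right before the proposition, $\mY_1^{\textsf{SN}} = \exp(-\gamma_1^2/3) \exp(\gamma_1 \mX_1/2 - \gamma_1^2 \mX_1^2/12) \exp(\gamma_1 \mX_1/2 - \gamma_1^2 \mX_1^2/12)$, which by commutativity of a matrix with itself collapses to $\exp(-\gamma_1^2/3)\exp(\gamma_1 \mX_1 - \gamma_1^2 \mX_1^2/6)$. Taking the trace of this expression recovers exactly $L_1^{\textsf{SN}}$, so $L_1^{\textsf{SN}} = \tr(\mY_1^{\textsf{SN}})$, as the paper already notes. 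Moreover, $\mY_1^{\textsf{SN}}$ is positive semidefinite since it is the exponential of a symmetric matrix times a positive scalar.

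Second, I would translate the rejection rule of \emtm\ into a scalar one. With $\mA = (d/\alpha)\mI$, the event $\mY_1^{\textsf{SN}} \nmatle \mA$ is equivalent to $\lambda_{\max}(\mY_1^{\textsf{SN}}) > d/\alpha$, since for any symmetric matrix $\mB$, $\mB \matle c\mI$ iff $\lambda_{\max}(\mB) \leq c$. (Note also that $\tr(\mA^{-1}) = \alpha$, which is the scaling required by \eqref{eqn:mvi} so the level is indeed $\alpha$, matching the level used by \emts.)

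Third, I would combine these two ingredients via the standard fact (reviewed in \cref{sec:prelim}) that $\tr$ dominates $\lambda_{\max}$ on $\cS_d^{+}$. Indeed,
\begin{equation}
    L_1^{\textsf{SN}} \;=\; \tr(\mY_1^{\textsf{SN}}) \;\ge\; \lambda_{\max}(\mY_1^{\textsf{SN}}) \;>\; d/\alpha,
\end{equation}
so \emts\ rejects as well. This proves the containment of the rejection events and hence the claim that \emts\ is at least as powerful as \emtm\ in this setting. There is essentially no obstacle: the proof is three lines. The only thing worth flagging is that the argument depends crucially on the isotropic choice $\mA = (d/\alpha)\mI$; for an anisotropic $\mA$ one would need to compare $\lambda_{\max}(\mA^{-1/2} \mY_1^{\textsf{SN}} \mA^{-1/2})$ with $\tr(\mY_1^{\textsf{SN}})$, and the implication need not go through, which is presumably why the authors isolate this special case.
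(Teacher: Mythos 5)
Your proof is correct and follows exactly the paper's own argument: with $\mA = (d/\alpha)\mI$, rejection by \tm\ means $\lambda_{\max}(\mY_1^{\textsf{SN}}) > d/\alpha$, and then $L_1^{\textsf{SN}} = \tr(\mY_1^{\textsf{SN}}) \ge \lambda_{\max}(\mY_1^{\textsf{SN}}) > d/\alpha$ since the statistic is positive semidefinite. The extra details you supply (collapsing the product of exponentials at $n=1$ and the remark about anisotropic $\mA$) are consistent with the surrounding discussion in the paper but do not change the argument.
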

\begin{proof}
    If $\mA = (d /\alpha )\mI$ and \tm\ rejects, then $\mY_1^{\textsf{SN}} \nmatle (d /\alpha )\mI$, meaning that $\lambda_{\max}(\mY_1^{\textsf{SN}}) > d/\alpha$. Therefore,  $  L_1^{\textsf{SN}} = \tr  (\mY_1^{\textsf{SN}}) \ge \lambda_{\max}(\mY_1^{\textsf{SN}}) > d/\alpha$.
\end{proof}

\begin{proposition}
    Suppose that $d \ge 2$ and that the random matrix $\mY_1^{\mathsf{SN}}$ has full support, meaning it can equal any matrix in $\cS_d^{++}$. Then, 
    \begin{itemize}
        \item  $(d /\alpha )\mI$ is the only matrix that \emtm\ can pick as $\mA \in \cS_d^{++}$ such that if \emtm\ rejects at $n=1$, \emts\ always rejects as well. Therefore, as long as $\mA \neq (d /\alpha )\mI$, \emts\ is not at least as powerful as \emtm;
        \item It is impossible that  if \emts\ rejects at $n=1$, \emtm\ always rejects as well. So \emtm\ is never at least as powerful as \emts. 
    \end{itemize}
   In summary, for general $\mA$ and $n=1$, neither $\emts$ nor $\emtm$ dominates the other.
\end{proposition}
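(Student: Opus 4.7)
The two bullets both ask for counterexamples: for part one, given $\mA \neq (d/\alpha) \mI$ with $\tr(\mA^{-1}) = \alpha$, I must exhibit a matrix $\mY$ in $\cS_d^{++}$ that lies in $\{ \mY \nmatle \mA\}$ but satisfies $\tr \mY < d/\alpha$; for part two, given \emph{any} admissible $\mA$, I must exhibit a $\mY \in \cS_d^{++}$ with $\mY \matle \mA$ (so \tm\ does not reject) yet $\tr \mY \geq d/\alpha$ (so \ts\ rejects). Since $\mY_1^{\mathsf{SN}}$ has full support on $\cS_d^{++}$, each counterexample will be realized with positive probability, refuting the stated implications. The essential constraint to keep in mind throughout is the AM--HM inequality applied to the eigenvalues $\lambda_1,\dots,\lambda_d$ of $\mA$: from $\sum_i \lambda_i^{-1} = \alpha$ one obtains $\tr \mA = \sum_i \lambda_i \ge d^2/\alpha$, with equality iff every $\lambda_i = d/\alpha$.

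\textbf{Part one.} The ``if'' side is immediate: when $\mA = (d/\alpha)\mI$, $\mY \nmatle \mA$ is exactly $\lambda_{\max}(\mY) > d/\alpha$, so $\tr \mY \ge \lambda_{\max}(\mY) > d/\alpha$, and \ts\ rejects. For uniqueness, suppose $\mA \neq (d/\alpha)\mI$. Then the eigenvalues of $\mA$ are not all equal to $d/\alpha$, so by the AM--HM equality condition the smallest eigenvalue $\lambda_{\min}(\mA)$ is \emph{strictly} less than $d/\alpha$. Pick a unit eigenvector $\vv$ with $\mA\vv = \lambda_{\min}(\mA)\vv$, choose $s \in (\lambda_{\min}(\mA), d/\alpha)$, and set
\begin{equation}
\mY := \epsilon \mI + (s - \epsilon)\vv\vv\trsp
\end{equation}
for $\epsilon > 0$ small enough that $s - \epsilon > \lambda_{\min}(\mA)$ and $s + (d-1)\epsilon < d/\alpha$. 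This matrix lies in $\cS_d^{++}$; it satisfies $\vv\trsp \mY \vv = s > \lambda_{\min}(\mA) = \vv\trsp \mA \vv$, hence $\mA - \mY$ is not positive semidefinite and so $\mY \nmatle \mA$; and its trace is $s + (d-1)\epsilon < d/\alpha$. Thus \tm\ rejects but \ts\ does not.

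\textbf{Part two.} For arbitrary admissible $\mA$, take $\mY := \mA - \epsilon \mI$ for small $\epsilon > 0$. For $\epsilon < \lambda_{\min}(\mA)$ the matrix $\mY$ is in $\cS_d^{++}$; by construction $\mY \matls \mA$, so \tm\ does not reject. Since $d \ge 2$ we have $d^2/\alpha > d/\alpha$, so AM--HM gives $\tr \mA \ge d^2/\alpha > d/\alpha$, and we may further shrink $\epsilon$ so that $\tr \mY = \tr \mA - d\epsilon \ge d/\alpha$, ensuring \ts\ rejects. Full support of $\mY_1^{\mathsf{SN}}$ produces this $\mY$ with positive probability.

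\textbf{Main obstacle.} The mathematical content is light; the subtleties are a careful treatment of strict versus non-strict inequality in the two rejection rules, and verifying positive definiteness of the perturbations (which is the reason we add the small $\epsilon \mI$ term in part one and require $\epsilon < \lambda_{\min}(\mA)$ in part two). The key conceptual step is recognizing that only the proportional-to-identity threshold $(d/\alpha)\mI$ forces $\lambda_{\max}$ and hence $\tr$ to cross $d/\alpha$ simultaneously, while the slack in the AM--HM inequality is exactly what makes \tm\ strictly less powerful than \ts\ in the reverse direction.
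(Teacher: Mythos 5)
Your proof is correct and takes essentially the same route as the paper's: for the uniqueness claim you put a bump slightly exceeding $\lambda_{\min}(\mA)$ along the bottom eigenvector plus a small multiple of $\mI$ (the paper uses $\lambda_1\mP_1+\varepsilon\mI$), and for the second claim you strictly shrink $\mA$ (additively, $\mA-\epsilon\mI$, versus the paper's $0.9\mA$) while using $\tr \mA \ge d^2/\alpha$, the same Cauchy--Schwarz/AM--HM consequence of $\tr(\mA^{-1})=\alpha$ that the paper invokes. The only cosmetic difference is that you spell out why $\lambda_{\min}(\mA)<d/\alpha$ when $\mA\neq(d/\alpha)\mI$, a step the paper leaves implicit.
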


\begin{proof}
    For the first statement, consider the spectral decomposition of the matrix $\mA \neq (d/\alpha)\mI$,
    \begin{equation}
        \mA = \lambda_1 \mP_1 + \dots + \lambda_d \mP_d, \quad \lambda_1 \le \dots \le \lambda_d.
    \end{equation}
    The matrix $\lambda_1  \mP_1$ satisfies $\tr(\lambda_1  \mP_1) = \lambda_1 < d/\alpha$. Applying a small pertubation, $\lambda_1\mP_1 + \varepsilon \mI \nmatle \mA$, $\tr(\lambda_1\mP_1 + \varepsilon \mI) < d/\alpha$. So, if $\mY_1^{\mathsf{SN}}$ takes a value that is close to $\lambda_1\mP_1 + \varepsilon \mI$, \tm\ rejects at $n=1$ but \ts\ does not.

    For the second statement, the matrix $0.9\mA$ satisfies $\tr(0.9\mA) \ge 0.9 d^2/\alpha > d/\alpha$ by Cauchy-Schwarz inequality and  $0.9\mA \matle \mA$. Therefore,  if $\mY_1^{\mathsf{SN}}$ takes a value that is close to $0.9 \mA$, \ts\ rejects at $n=1$ but \tm\ does not.
\end{proof}

Therefore, one may understand the difference between \tm\ and \ts\ as that \tm\ has the freedom to choose the threshold matrix $\mA$ \emph{in anticipation of the potential alternative}; whereas \ts\ represents some kind of Pareto optimal procedure if completely agnostic about the reality. For \tm\ to be more powerful it has to choose $\mA$ wisely according to some prior knowledge. The following example further showcases this point.

\begin{example} Suppose \tm\ knows beforehand what $\mX_1$ is (and hence what $\mY_1^{\mathsf{SN}}$ is), and $\lambda_d > 1/\alpha$ where
\begin{equation}
        \mY_1^{\mathsf{SN}} = \lambda_1 \mP_1 + \dots + \lambda_d \mP_d, \quad \lambda_1 \le \dots \le \lambda_d
    \end{equation}
    is its spectral decomposition.
Then, \tm\ can always make sure to reject by picking
\begin{equation}
    \mA = \varepsilon^{-1}(d-1)\mP_1 + \dots \varepsilon^{-1}(d-1)\mP_{d-1}  + (\alpha - \varepsilon)^{-1}\mP_d
\end{equation}
with $\varepsilon \in (0,\alpha - \lambda_d^{-1})$. The condition $\lambda_d > 1/\alpha$ here is strictly weaker than the condition under which \ts\ rejects: $\lambda_1+\dots + \lambda_d \ge d/\alpha$.
\end{example}
Indeed, as we can see from these examples with $n=1$  (thus we have only compared the matrix \emph{Markov} inequality with the scalar Markov inequality on the trace), for \tm\ to outpower \ts, apart from prior knowledge, it is favorable that \emph{$\lambda_{\max}(\mY_1^{\mathsf{SN}})$ is an outlier in the spectrum of $\mY_1^{\mathsf{SN}}$}, i.e., its \emph{effective rank} $\er(\mY_1^{\mathsf{SN}}) = {\tr(\mY_1^{\mathsf{SN}})}/{\lambda_{\max}(\mY_1^{\mathsf{SN}})}$ is close to 1. This is exactly what we would predict from the discussion in \cref{sec:tight}, where we mention that \eqref{eqn:mmi} is tighter when the rejection threshold matrix $\mA$ (thus also the random matrix itself) has a lower effective rank. In the very next subsection, we shall see that whereas the low-rank condition $\er(\mY_n^{\mathsf{SN}}) \approx 1$ seems artificial and contrived here with $n=1$, it becomes an almost certain phenomenon with large $n$.

\subsection{Comparison with a Sequence of Observations}


Let us consider first an example of a sequence of commuting observations, where \tm's excess power compared to \ts\ increases as time goes on. 

\begin{example}
    Suppose each observation has the spectral decomposition
\begin{equation}
    \mX_n = \lambda_{1n} \mP_1 + \dots + \lambda_{dn} \mP_d
\end{equation}
with common deterministic projection matrices $\mP_1, \dots, \mP_d$.
For each $k$, $ \lambda_{k1}, \lambda_{k2}, \dots \iid \pi_k $ with mean $ \Exp \lambda_{k1} = \lambda_k \ge 0$ 
and variance $ \Var \lambda_{k1} = \sigma_k^2 \le 1$. Then, the variance $\Var (\mX_1) =  \sigma_{1}^2 \mP_1 + \dots + \sigma_{d}^2 \mP_d \matle \mI$.

{We fix a deterministic sequence of positive scalars $\gamma_n = n^{-1/2}$. For each $k$, we define the auxiliary ``component process'' $\{F_{kn}\}_{n \ge 0}$ as the scalar finite-variance self-normalized test process with $\gamma_n = n^{-1/2}$ as the weight sequence when observing data $\lambda_{k1}, \lambda_{k2},\dots$:
\begin{equation}
    F_{kn} =  \exp \left( -\frac{1}{3}\sum_{i=1}^n \gamma_i^2 + \sum_{i=1}^n \gamma_i \lambda_{ki} - \frac{1}{6} \sum_{i=1}^n \gamma_i^2 \lambda_{ki}^2 \right).
\end{equation}}
The matrix-valued process that \tm\ computes {can then be written as}
\begin{equation}
    \mY_n^{\textsf{SN}} 
     = \sum_{k=1}^d  F_{kn} \mP_k,
\end{equation}
and the scalar-valued process that \ts\ computes is 
\begin{equation}
    L_n^{\textsf{SN}} = \sum_{k=1}^d  F_{kn} = \tr( \mY_n^{\textsf{SN}} ).
\end{equation}
{Note that $\{F_{kn} \}$ satisfies
\begin{equation}
    F_{kn} = F_{k(n-1)}\cdot \exp\left( \frac{\lambda_{kn}}{\sqrt{n}} - \frac{2+\lambda_{kn}^2}{6n} \right).
\end{equation}
For large $n$, the first $n^{-1/2}$ term inside the $\exp(\cdot)$ above dominates the second $n^{-1}$ term.
We see that each scalar component process $\{F_{kn}\}$ eventually grows exponentially fast if $\lambda_k = \Exp \lambda_{kn} > 0$, and shrinks to 0 if $\lambda_k = 0$; moreover, the exponential growth rate of $\{F_{kn}\}$ depends polynomially on the effect size $\lambda_k$.} Consequently, as long as there is any mean disparity between the component distributions $\pi_1, \dots, \pi_d$, say $\lambda_1,\dots, \lambda_{d-1}< \lambda_d$, \emph{in the long run the process $\mY_n^{\textsf{SN}}$ will have a low effective rank} with $\er(\mY_n^{\mathsf{SN}}) = {\tr(\mY_n^{\mathsf{SN}})}/{\lambda_{\max}(\mY_n^{\mathsf{SN}})} \to 1$, as the component $F_{dn} \mP_d$ will exponentially outgrow the rest $F_{1n} \mP_1,\dots, F_{(d-1)n} \mP_{d-1}$. In this case if \tm\ correctly anticipates the largest component $\mP_d$, by picking again the threshold matrix
\begin{equation}\label{eqn:threshold-recipe}
     \mA = \varepsilon^{-1}(d-1)\mP_1 + \dots \varepsilon^{-1}(d-1)\mP_{d-1}  + (\alpha - \varepsilon)^{-1}\mP_d
\end{equation}
with small $\varepsilon$ it will outperform \ts: it only takes $F_{dn}$ to surpass $(\alpha - \varepsilon)^{-1} \approx 1/\alpha$ for \tm\ to reject, whereas it takes $L_n^{\textsf{SN}}\approx F_{dn} $ to surpass $d/\alpha$ for \ts\ to reject. Similar reasoning applies as long as \tm\ knows a \emph{subset} of the $d$ components with larger means. {We finally remark that we can lift the assumption $\lambda_k \ge 0$ here, by using the ``hedging'' argument outlined in Remark~\ref{rmk:bet}. Let $\mY_n^{+}$ and $\mY_n^{-}$ be the processes \tm\ would use with $\gamma_n = n^{-1/2}$ and $\gamma_n = - n^{-1/2}$ respectively. Then, if $|\lambda_1|,\dots, |\lambda_{d-1}| < |\lambda_d|$, the spectrum of $(\mY_n^{+} + \mY_n^{-})/2$ will concentrate on the $\mP_d$ direction, too.}
\end{example}

We experiment with the following setup. We pick an orthonormal basis in $d=3$ dimension: $\vu_1 = (1/\sqrt{2}, 1/\sqrt{2}, 0)\trsp$, $\vu_2 = (0,0,1)\trsp$, $\vu_3 = (1/\sqrt{2}, -1/\sqrt{2}, 0)\trsp$, and let $\mP_1 = \vu_1 \vu_1\trsp$, $\mP_2 = \vu_2 \vu_2\trsp$, $\mP_3 = \vu_3 \vu_3\trsp$. The spectral distributions are set as
$\pi_1 = \cN(0.1,1)$, $\pi_2= \cN(-0.2,1)$, and $\pi_3 = \cN(0.3, 1)$. We consider the following three sequential tests: \emph{smart} \tm, one that follows the recipe \eqref{eqn:threshold-recipe} with $\varepsilon = \alpha/20$;
\emph{naive} \tm, one that is agnostic about the real distribution and picks $\mA = (d/\alpha)\mI$; and \ts, all with the parameters $\gamma_n = n^{-1/2}$ and $\alpha = 0.05$. We record the times at which these tests correctly reject the null, as well as the evolution of $\er^{-1}(\mY_n^{\mathsf{SN}}) = {\lambda_{\max}(\mY_n^{\mathsf{SN}})}/{\tr(\mY_n^{\mathsf{SN}})} $. The findings are displayed in \cref{fig:commuting-exps}: smart \tm\ indeed outpowers the other two, whereas naive \tm\ and \ts\ are similar in power, explained by the quick convergence of $\er(\mY_n^{\mathsf{SN}})$ to 1.

\begin{figure}[h]
    \centering
    \includegraphics[width=0.48\linewidth]{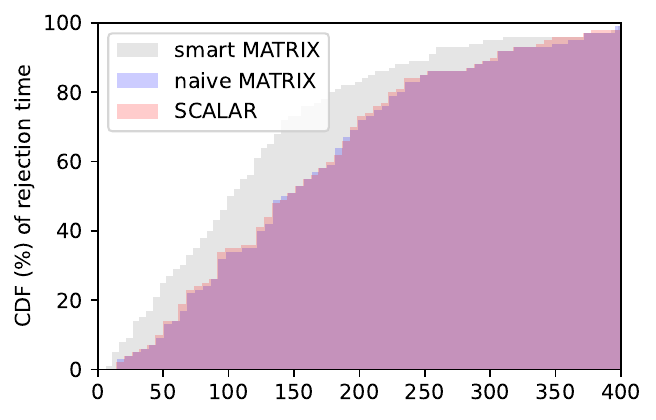}
    \includegraphics[width=0.3\linewidth]{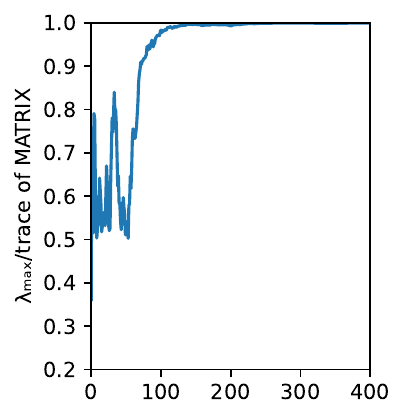}
    \caption{Empirical findings with commuting i.i.d.\ random matrices. \emph{Left.} Comparison of the cumulative distribution functions of rejection times of the sequential tests corresponding to the matrix- and scalar-valued test processes under the same alternative distribution. \emph{Right.} Evolution of the inverse of the effective rank of the matrix-valued test process under the alternative.}
    \label{fig:commuting-exps}
    \vskip 2em
\end{figure}

While this indicates that
\begin{center}
    smart \tm\ $>$ naive \tm\ $\approx$ \ts
\end{center}
in power, we now come to the striking (in light of Proposition~\ref{prop:naivematrix}) phenomenon that with the commutativity assumption removed, it is actually possible that
\begin{center}
 smart \tm\ $>$ naive \tm\ $>$ \ts. 
\end{center}
In the following example that involves non-commuting sequential observations, we consider the application scenario of covariance estimation \citep{ke2019user}.

\begin{example}\label{ex:noncommute}
    Suppose $\mX_n = \vx_n \vx_n\trsp - (2d)^{-1}\mI$ where $\vx_1, \vx_2,\dots \iid \cN(0, \mSg) $. We are to test if the null $\mSg =  (2d)^{-1} \mI$ is true,\footnote{The null covariance  $(2d)^{-1} \mI$ is picked in the mere interest of simplicity so that the variance upper bound $\Var(\mX_n)\matle \mI$ is easily realizable in our example; it is easy to see that any $\mSg \in \cS_d^{++}$ works as well by adjusting the variance upper bound accordingly.} i.e., $ \Exp \mX_n  = 0$, under the 4\textsuperscript{th} moment assumption $\Exp \| \vx_n\|^4 =  (\tr \mSg)^2 +  2 \tr (\mSg^2)   \le 1$, which implies
    \begin{equation}
        \Var(\mX_n) = \Var(\vx_n \vx_n \trsp) \matle \Exp(\vx_n \vx_n \trsp \vx_n \vx_n \trsp) = \Exp(\|\vx_n \|^2 \cdot  \vx_n \vx_n \trsp) \matle \Exp(\|\vx_n \|^2 \cdot  \|\vx_n \|^2 \mI) \matle  \mI.
    \end{equation}
    Suppose in reality \tm\ knows that $\mSg \neq  (2d)^{-1} \mI$ and $\vu_d$ is an eigenvector of $\mSg - (2d)^{-1} \mI$ with large eigenvalue, so that the data $\mX_1, \mX_2,\dots$ will manifest non-zero random signal approximately around this spectral component, it can first complete the orthonormal basis $\vu_1,\dots,\vu_{d-1}$ and picks
    \begin{equation}\label{eqn:threshold-recipe-cov}
     \mA = \varepsilon^{-1}(d-1)\vu_1 \vu_1\trsp + \dots \varepsilon^{-1}(d-1)\vu_{d-1} \vu_{d-1}\trsp  + (\alpha - \varepsilon)^{-1}\vu_d \vu_d\trsp
\end{equation}
to outpower \ts.
    \end{example} 
    Our experiment is set again in the $d=3$ case with  $\vu_1 = (1/\sqrt{2}, 1/\sqrt{2}, 0)\trsp$, $\vu_2 = (0,0,1)\trsp$, $\vu_3 = (1/\sqrt{2}, -1/\sqrt{2}, 0)\trsp$, and the covariance matrix
    \begin{equation}
     \mSg = \left( \begin{matrix}
            1/6  & 1/24 & 0 \\ 1/24 & 1/6 & 0 \\ 0 & 0 & 1/6
        \end{matrix}
        \right) = \text{null hypothesis} + \left( \begin{matrix}
           0 & 1/24 & 0 \\ 1/24 & 0 & 0 \\ 0 & 0 & 0
        \end{matrix}
        \right),
    \end{equation}
    which indeed satisfies $(\tr \mSg)^2 +  2 \tr (\mSg^2) = \frac{61}{144} < 1$.
   The  three sequential tests as before are compared in \cref{fig:nonc-exps}: the \emph{smart} \tm\ that follows \eqref{eqn:threshold-recipe-cov} with $\varepsilon =  \alpha/20$; the \emph{naive} \tm\ with $\mA = (d/\alpha) \mI$; \ts, all with $\gamma_n = n^{-1/2}$ and $\alpha = 0.05$. Besides the unsurprising fact that the smart \tm\ is again the most powerful, rejecting the null earlier than the other two, we also observe the phenomenon that, while in the $n=1$ case, the matrix-valued test process with ``naive" $\mA= (d/\alpha) \mI$ is always less powerful than the scalar-valued test process, this is reversed under non-commutativity with large $n$ as naive \tm\ rejects slightly earlier than \ts\ in our simulation. As also shown in \cref{fig:nonc-exps}, the combination of two auxiliary findings contributes to this phenomenon: $\er(\mY_n^{\mathsf{SN}}) = {\tr(\mY_n^{\mathsf{SN}})}/{\lambda_{\max}(\mY_n^{\mathsf{SN}})} \to 1$ even when observations do not commute; and, while
   $ L_n^{\textsf{SN}} = \tr( \mY_n^{\textsf{SN}} )$ in all our previous examples, now $ \tr( \mY_n^{\textsf{SN}} )$  grows slightly faster than $ L_n^{\textsf{SN}}$. It takes further theoretical investigations to understand why this happens (either in this particular example or in general), but to some extent it hints nonetheless at the possibility that in practice where the effect size is small (requiring large $n$) and matrix observations are non-commuting, there is an inherent advantage using the matrix-valued test process we propose.
\begin{figure}[!h]
    \centering
    \includegraphics[width=0.4\linewidth]{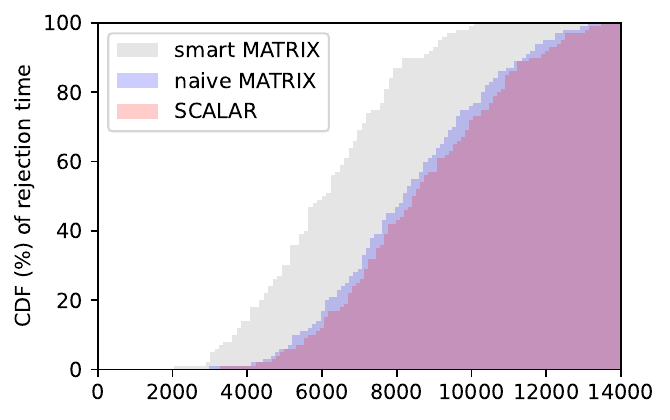}
    \includegraphics[width=0.26\linewidth]{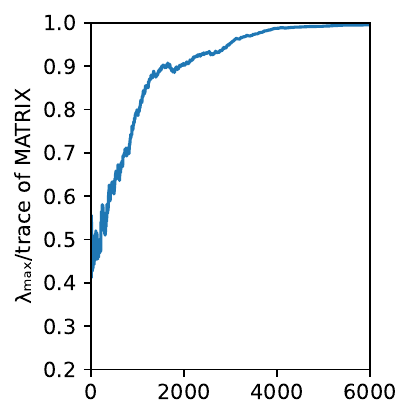}
    \includegraphics[width=0.305\linewidth]{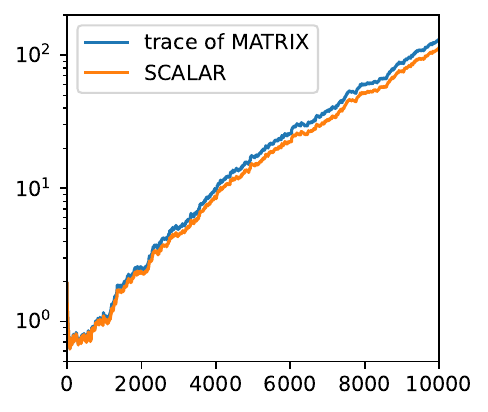}
    \caption{Empirical findings with non-commuting i.i.d.\ random matrices for covariance testing. \emph{Left.} Comparison of the cumulative distribution functions of rejection times of the sequential tests corresponding to the matrix- and scalar-valued test processes under the same alternative distribution. \emph{Middle.} Evolution of the inverse of the effective rank of the matrix-valued test process under the alternative. \emph{Right.} Comparison between the trace of the matrix-valued test process, and the scalar-valued test process under the same alternative distribution.
     }
    \label{fig:nonc-exps}
\end{figure}

\section{On the Randomized Matrix Markov Inequality}\label{sec:ummi-full}

Instead of randomizing the bound with the isotropic diagonal matrix $U\mI$ where $U$ is a uniform or super-uniform random scalar, let us seek an inherently matrix-born definition of uniformity and super-uniformity. 
Scalar super-uniformity has two viable extensions to matrices, one in terms of the trace and the other of the spectral norm.
\begin{definition}
    We call an $\cS_d^{+}$-valued random matrix $\mU$ \emph{spectrally super-uniform} if  $\Pr(\mU 
    \nmatge \mY) \le \lambda_{\max}(\mY)$ for all $\mY \in \cS_d^{+}$, and \emph{$\cY$-spectrally uniform} if, further, equality holds for all $\mY \in \cY \subseteq \cS_d^{+}$. We say $\mU$ is \emph{trace super-uniform}  if  $\Pr(\mU 
    \nmatge \mY) \le \tr \mY$ for all $\mY \in \cS_d^{+}$, and \emph{$\cY$-trace uniform}  if, further, equality holds for all $\mY \in \cY \subseteq \cS_d^{+}$.
\end{definition}
Note that every spectrally super-uniform matrix is a trace super-uniform matrix since $\lambda_{\max} (\mY) \le \tr \mY$. If $U$ is a super-uniform random scalar, then $U\mI$ is a spectrally (hence trace) super-uniform random matrix, because
$
    \Pr( U \mI \nmatge \mY ) = \Pr( U \le \lambda_{\max}(\mY) ) \le \lambda_{\max}(\mY).
$

Straightforwardly the identity matrix $\mI$ is spectrally (hence trace) super-uniform.
If $U \sim \operatorname{Unif}_{(0,1)}$, $U \mI$ is $\cS_d^{[0,1]}$-spectrally uniform, and $\left\{ \vv \vv \trsp :  \|\vv\| \le 1 \right\}$-trace uniform. Having defined trace super-uniformity, we can now state a matrix extension of the uniformly randomized Markov's inequality \eqref{eqn:umi}.

\begin{theorem}[Generalized Uniformly Randomized Matrix Markov Inequality]\label{thm:g-ur-matmarkov-meta}
     Let $\mX$ be a random matrix taking values in $\cX \subseteq \cS_d^{+}$, and $\mU$ a trace super-uniform random matrix taking values in $\cS_d^{+}$ independent from $\mX$. For any $\mdiv \in \cS_d^{++}$,
     \begin{equation}\label{eqn:g-ummi}
         \Pr( \mX  \nmatle  \mdiv^{1/2} \mU \mdiv^{1/2}) \le \tr( (\Exp \mX) \mdiv^{-1}).
     \end{equation}
     The above holds with equality if and only if $\mU$ is $\mdiv^{-1/2} \mathcal{X} \mdiv^{-1/2}$-trace uniform.
\end{theorem}

\begin{proof} By our assumptions,
    \begin{align}
    & \Pr( \mX  \nmatle  \mdiv^{1/2} \mU \mdiv^{1/2}) =\Pr( \mdiv^{-1/2} \mX \mdiv^{-1/2} \nmatle \mU ) = \Exp( \Pr( \mdiv^{-1/2} \mX \mdiv^{-1/2} \nmatle \mU 
 | \mX) )  &
    \\
    \le \ & \Exp( \tr (\mdiv^{-1/2} \mX \mdiv^{-1/2}) ) = \tr( (\Exp \mX) \mdiv^{-1}),
\end{align}
concluding the proof.
\end{proof}

When taking $\mU$ to be the deterministic $\mI$, \eqref{eqn:g-ummi} recovers \eqref{eqn:mmi}.
It is not hard to see that \eqref{eqn:g-ummi} is strictly more general than the \emph{randomized scalar} Markov's inequality \eqref{eqn:umi} and the \emph{scalar-randomized matrix} Markov's inequality \eqref{eqn:ummi}. 
We also see from the monotonicity of the trace and the trace-spectral norm inequality that scalarized trace and spectral bounds are easily obtainable via
\begin{equation}
  \Pr ( \lambda_{\max}(\mX) > \tr( \mdiv \mU) ) \le  \Pr ( \tr \mX > \tr( \mdiv \mU) ) \le  \Pr( \mX  \nmatle  \mdiv^{1/2} \mU \mdiv^{1/2}),
\end{equation}
subject to the same upper bound $\tr( (\Exp \mX) \mdiv^{-1})$.

The following is an example where the inequality of \eqref{eqn:g-ummi} holds with equality. We take $\mU = U \mI$ where $U \sim  \operatorname{Unif}_{(0,1)}$, and $\mdiv \in \cS_d^{++}$. Let $\vx$ be a random vector in the ellipsoid $\{ \vx : \vx\trsp \mdiv^{-1} \vx \le 1 \} \subseteq \mathbb R^d$. Then,
\begin{equation}
 \Pr( \| \mdiv^{-1/2} \vx\|^2 \le U  ) = \Pr( \vx \vx\trsp \nmatle U \mdiv ) = \tr (\mdiv^{-1}  \Exp (\vx \vx\trsp)) =  \Exp \| \mdiv^{-1/2} \vx \|^2,
\end{equation}
where the second equality above is \eqref{eqn:g-ummi} applied on the random matrix $\vx \vx 
 \trsp$. It is not hard to see here the condition for equality is met. Intuitively, this hints that \eqref{eqn:g-ummi} is tighter when the random matrix $\mX$ is approximately of lower rank, a point that we shall return to later in this paper with statistical examples.

Concluding this section, we remark on trace super-uniformity that clearly $U\mI + \mY$ is trace super-uniform when $U\sim \operatorname{Unif}_{(0,1)} $ and $\mY \matge 0$, and this provides an interesting class for our purposes. It may be interesting to fully characterize the set of all trace super-uniform matrices, but we leave it as future work.

\section{Omitted Proofs}
\label{sec:pf}

\subsection{Dominated Integrability Lemma}\label{sec:domint}
It is well-known that if $X$ and $Y$ are scalar random variables such that $c \le X \le Y$ almost surely for some constant $c$ and that $\Exp |Y| < \infty$, it follows that $\Exp |X| < \infty$ as well, and $\Exp X \le \Exp Y$. This type of ``implied integrability" appears frequently in scalar concentration bounds.
Let us prove its symmetric matrix extension for the sake of self-containedness.
\begin{lemma}[Dominated Integrability]\label{lem:domint}
    Let $\mX$ and $\mY$ be $\cS_d^{[c,\infty)}$-valued random matrices for some $c\in \mathbb R$ such that $\mX \matle \mY$ almost surely. Further, suppose $\Exp \mY$ exists. Then, so does $\Exp \mX$ and $\Exp \mX \matle \Exp \mY$. 
\end{lemma}
\begin{proof} Let us prove that each element $X_{ij}$ of the random matrix $\mX$ is integrable.
    Note that for any deterministic $\vv \in \mathbb R^d$, $\vv \trsp \mX \vv \le \vv \trsp \mY \vv$ almost surely. First, taking $\vv = (0,\dots, 0, 1 , 0,\dots 0)\trsp$, we have
    \begin{equation}
        c \le X_{jj} \le Y_{jj} \quad \text{almost surely},
    \end{equation}
    concluding that the diagonal element $X_{jj}$ must be integrable (since $Y_{jj}$ is). Next, taking $\vv = (0,\dots, 0, 1 , 0,\dots ,0, 1 , 0, \dots, 0)\trsp$, we have
    \begin{equation}
       2 c \le 2X_{ij} + X_{ii} + X_{jj} \le 2Y_{ij} + Y_{ii} + Y_{jj} \quad \text{almost surely},
    \end{equation}
    concluding that $2X_{ij} + X_{ii} + X_{jj}$ must be integrable (since $2Y_{ij} + Y_{ii} + Y_{jj}$ is). Therefore, the off-diagonal element $X_{ij}$ is integrable since $X_{ii}$ and $X_{jj}$ are.

    Now that we have established the existence of $\Exp \mX$, it is clear that $\Exp \mX \matle \Exp \mY$ since for any $\vv \in \mathbb R^d$, $
        \vv\trsp (\Exp \mX) \vv = \Exp  (\vv\trsp \mX \vv) \le \Exp  (\vv\trsp \mY \vv) = \vv\trsp (\Exp \mY) \vv$.
\end{proof}

\subsection{Proofs for Forward Supermartingales}
\label{sec:pf-tp}


The following \emph{transfer rule} \citep[Equation 2.2]{tropp2012user} shall commonly be used: suppose $I \subseteq \mathbb R$ and $f, g:I \to \mathbb R$ satisfies $f(x) \le g(x)$, then, since the function $g-f: I \to \mathbb R_{\ge 0}$ identifies canonically with a matrix-to-matrix function $g-f: \cS_d^I \to \cS_d^{+}$, it follows that $f(\mX) \matle g(\mX)$ for any $\mX \in \cS_d^I$.

\begin{proof}[Proof of Lemma~\ref{lem:nsmiff}] {(1) Note that $\mA \matge 0$ if and only if $ \vv \trsp \mA \vv \ge 0$ for all $\vv \in \mathbb Q^d$ due to the continuity of the quadratic form. We have,
    \begin{align}
        & \text{$\{ \mY_n \}$ is a supermartingale}
        \\
        \iff & \Pr\left( \bigcap_n \{ \mY_{n-1} - \Exp(\mY_n|\cF_{n-1}) \matge 0 \}\right)=1
        \\
        \iff & \Pr\left( \bigcap_n \bigcap_{v \in \mathbb Q^d} \{ \vv\trsp(\mY_{n-1} - \Exp(\mY_n|\cF_{n-1}))\vv \ge 0 \}\right)=1
        \\
        \iff &  \Pr\left( \bigcap_n \bigcap_{v \in \mathbb Q^d} \{ \vv\trsp \mY_{n-1} \vv - \Exp(\vv\trsp \mY_n \vv|\cF_{n-1}) \ge 0 \}\right)=1 \\
        \iff & \text{For all $\vv \in \mathbb Q^d$, $\{ \vv\trsp \mY_n \vv \}$ is a supermartingale}
        \\
        \iff & \text{For all $\vv \in \mathbb R^d$, $\{ \vv\trsp \mY_n \vv \}$ is a supermartingale.}  
    \end{align}
    (2) Since $\mA_n = \sum_{i=1}^n  (\Exp(\mY_i|\cF_{i-1}) - \mY_{i-1})$,
    \begin{align}
    & \text{$\{ \mY_n \}$ is a supermartingale}
        \\
        \iff  &  \text{For all $n$, $\Exp(\mY_n|\cF_{n-1}) - \mY_{n-1} \matle 0$}
        \\
        \iff &  \text{For all $n$, $\mA_n - \mA_{n-1} \matle 0$.}   \qedhere
    \end{align}
    
    }
\end{proof}

\begin{proof}[Proof of Example~\ref{ex:sn}]
Let us replicate the key steps of the proof of Lemma 3(f) in \cite{howard2020time}.
    Recalling from \citet[Proposition 12]{delyon2009exponential} that $\exp(x - x^2/6) \le  1 + x + x^2/3 $ for all $x\in \mathbb R$, we have, due to the transfer rule
    \begin{equation}
         \exp(\gamma_n (\mX_n - \mM_n)- \gamma_n^2 (\mX_n - \mM_n)^2/6)  \matle 1 + \gamma_n (\mX_n - \mM_n) + \gamma_n^2 (\mX_n - \mM_n)^2/3 .
    \end{equation}
    This allows us to use Lemma~\ref{lem:domint},
    \begin{align}
       & \Exp\left( \exp(\gamma_n (\mX_n - \mM_n)- \gamma_n^2 (\mX_n - \mM_n)^2/6)   \mid \cF_{n-1} \right) 
        \\
        \matle \ &   \Exp\left( 1 + \gamma_n (\mX_n - \mM_n) + \gamma_n^2 (\mX_n - \mM_n)^2/3   \mid \cF_{n-1} \right)
        \\
        = \ &   1 +  \Exp\left(  \gamma_n^2 (\mX_n - \mM_n)^2/3   \mid \cF_{n-1} \right) \matle 1 + \frac{\gamma_n^2}{3} \mV_n \matle \exp \left( \frac{\gamma_n^2}{3}\mV_n \right),
    \end{align}
    where in the last step we use the transfer rule again with $1+x \le \exp(x)$ for all $x\in \mathbb R$.
\end{proof}

\begin{proof}[Proof of Example~\ref{ex:sym}]
    Let us replicate the key steps of the proof of Lemma 3(d) in \cite{howard2020time}, {another self-normalization-style bound derived this time by the classical symmetrization argument}. Due to the symmetry of $\mX_n-\mM_n$, we introduce a Rademacher random variable $\epsilon$ independent from $\cF_n$,
    \begin{align}
       &\Exp \left(  \exp \left( \gamma_n (\mX_n-\mM_n) - \frac{\gamma_n^2 (\mX_n-\mM_n)^2}{2} \right) \middle | \cF_{n-1}\right)
       \\
       =& \Exp \left(  \exp \left( \gamma_n \epsilon(\mX_n-\mM_n) - \frac{\gamma_n^2 (\mX_n-\mM_n)^2}{2} \right) \middle | \cF_{n-1}\right)
       \\
       = & \Exp \left\{  \Exp \left(  \exp \left( \gamma_n \epsilon(\mX_n-\mM_n) - \frac{\gamma_n^2 (\mX_n-\mM_n)^2}{2} \right) \middle | \cF_{n}\right) \middle | \cF_{n-1} \right\}
       \\
        = & \Exp \left\{ \frac{ \exp \left( \gamma_n (\mX_n-\mM_n) - \frac{\gamma_n^2 (\mX_n-\mM_n)^2}{2} \right) + \exp \left( -  \gamma_n (\mX_n-\mM_n) - \frac{\gamma_n^2 (\mX_n-\mM_n)^2}{2} \right) }{2} \middle | \cF_{n-1} \right\}.
    \end{align}
    Note that the function $f(x) = \frac{\exp(x-x^2/2) + \exp(-x-x^2/2)}{2}$ satisfies $f(x) \le 1$ for all $x\in\mathbb R$, we see from the transfer rule and Lemma~\ref{lem:domint} that 
    \begin{equation}
        \Exp \left(  \exp \left( \gamma_n (\mX_n-\mM_n) - \frac{\gamma_n^2 (\mX_n-\mM_n)^2}{2} \right) \middle | \cF_{n-1}\right) =\Exp \left( f(\mX_n-\mM_n) \middle | \cF_{n-1}\right)  \matle \mI. \qedhere
    \end{equation}
\end{proof}

\begin{proof}[Proof of Example~\ref{ex:cat}] Using the transfer rule on the upper $p$-logarithmic contraction,
    \begin{align}
      &  \Exp\left( \exp(\phi (  \gamma_n(\mX_n - \mM_n) )  )   |\cF_{n-1} \right)
        \\
         \matle &  \Exp\left( \mI + \gamma_n(\mX_n - \mM_n) 
         + (1/p) \gamma_n^p \left( \abs(\mX_n - \mM_n)\right)^p  |\cF_{n-1} \right)
         \\
         \matle & \mI + \frac{\gamma_n^p}{p} \mV_n;
    \end{align}
    and on the lower $p$-logarithmic contraction,
    \begin{align}
      &  \Exp\left( \exp(-\phi (  \gamma_n(\mX_n - \mM_n) )  )   |\cF_{n-1} \right)
        \\
         \matle &  \Exp\left( \mI - \gamma_n(\mX_n - \mM_n) 
         + (1/p) \gamma_n^p \left( \abs(\mX_n - \mM_n)\right)^p  |\cF_{n-1} \right)
         \\
         \matle & \mI + \frac{\gamma_n^p}{p} \mV_n.
    \end{align}
    This concludes the proof.
\end{proof}

\subsection{Proofs for Backward Submartingales}\label{sec:pfbw}

\begin{lemma}[Contraction of the Matrix Variance]\label{lem:contr-var}
    Let $\mX_1, \dots, \mX_n$ be random matrices taking values in $\cS_d$ with common mean matrix $\Exp \mX_i = \mM$ and variance matrix upper bound $\Var (\mX_i) \matle \mV$, such that $\Exp \{\mX_i - \mM, \mX_j - \mM\} = 0 $ for any $i\neq j$.
    Then, for $\overline{\mX}_n = \frac{1}{n}(\mX_1 + \dots + \mX_n)$, we have
   \begin{equation}\label{eqn:mat-contr-variance}
       \Var (\overline{\mX}_n) \matle n^{-1}\mV.
   \end{equation}
\end{lemma}

\begin{proof}
    Note that
    \begin{equation} \small
         \Var (\overline{\mX}_n) = n^{-2}\Exp \left( \sum_{i=1}^n (\mX_i - \mM)   \right)^2 = n^{-2}\left( \sum_{i=1}^n \Var(\mX_i) + \sum_{i < j} \Exp\{\mX_i - \mM, \mX_j - \mM\}   \right) \matle n^{-1}\mV. \qedhere
    \end{equation}
\end{proof}

\begin{proof}[Proof of Theorem~\ref{thm:p-cheb-trace}]
    Due to {Fact}~\ref{fct:tracejen},
\begin{equation}
  \tr \left( \frac 1 n ( (\abs \mX_1)^p + \dots + (\abs \mX_n)^p ) \right) \ge \tr \left( \left(  \abs\frac{\mX_1+\dots + \mX_n}{n} \right)^p \right).
\end{equation}
Therefore,
\begin{equation}
    r_n = \tr \left( \abs(\overline{\mX}_n - \mC)^p \right)
\end{equation}
is a scalar nonnegative backward submartingale. Using the scalar backward Ville's inequality \eqref{eqn:bvi}, 
\begin{equation}
    \Pr\left( \exists n, \;  \tr \left( \abs(\overline{\mX}_n - \mM)^p \right) \ge a^p \right) \le  a^{-p} \Exp r_1 =  a^{-p} \tr \mV_p.
\end{equation}
Finally, noting that $\| \mA \|^p \le \tr((\abs \mA)^p)  $
concludes the proof.
\end{proof}

\subsection{Proof of Proposition~\ref{prop:tight}}\label{sec:pftight}
Recall for convenience the definition of sets
\begin{gather}
    \cC_{\tr}(\alpha) = \{ \mX \in \cS_d^{+} : \tr \mX \le d/\alpha \}, \\
    \cC_{\vv}(\alpha) = \{ \mX \in \cS_d^{+} : \vv\trsp \mX \vv \le 1/\alpha \}, \\
    \cA_{\alpha} = \{ \mA \in \cS_d^{++} :  \tr (\mA^{-1}) = \alpha \}, \\
    \cC({\mA}) = \{ \mX \in \cS_d^{+} :  \mX \matle \mA \}.
\end{gather}

\begin{proof}[Proof of Statement 1]
    Let $\mX \in \cC_{\tr}(\alpha)$. Then $\lambda_{\max}(\mX) \le \tr \mX \le d/\alpha$. Therefore $\mX \in \cC((d/\alpha)\mI)$. To see that there exists an $\mX \in \cC((d/\alpha)\mI)$ such that $\mX \notin \cC_{\tr}(\alpha)$, one can simply take $\mX = (d/\alpha)\mI$.
\end{proof}

\begin{proof}[Proof of Statement 2]
    To see that there exists an $\mX \in \cC(\mA)$ such that $\mX \notin \cC_{\tr}(\alpha)$, one can take $\mX = \mA$ as $\tr(\mA) \ge d^2/\alpha > d/\alpha$. To see that  there exists an $\mX \in \cC_{\tr}(\alpha)$ such that $\mX \notin \cC(\mA)$, suppose $\lambda_{\min}(\mA) = d/\alpha - 2\delta$ where $\delta > 0$. Take $\mX$ to be a matrix with the same {eigenvectors} as $\mA$ and eigenvalues $d/\alpha - \delta, \delta/(d-1), \dots, \delta/(d-1)$.
\end{proof}

\begin{proof}[Proof of Statement 3]
    First, when $\alpha' \ge  \frac{d}{\er(\mA^{-1})} \cdot \alpha $, let $\mX \in \cC_{\tr}(\alpha')$ then $\lambda_{\max}(\mX) \le \tr(\mX) \le d/\alpha' \le 1/\lambda_{\max}(\mA^{-1}) = \lambda_{\min}(\mA)$ so $\mX \in \cC(\mA)$. Second, when $\alpha' < \frac{d}{\er(\mA^{-1})} \cdot \alpha$, let $\delta = (d/\alpha' - \lambda_{\min}(\mA))/2$ and
    take $\mX$ as a matrix with the same {eigenvectors} as $\mA$ and eigenvalues $\lambda_{\min}(\mA)+ \delta, \delta/(d-1), \dots,  \delta/(d-1) $ to see that $\mX \in \cC_{\tr}(\alpha')$ but $\mX \notin \cC(\mA)$.
\end{proof}

\begin{proof}[Proof of Statement 4]
    First, when $\alpha' \le d (\tr \mA)^{-1}$, let $\mX \in \cC(\mA)$ then $\tr \mX \le \tr \mA \le d/\alpha'$ so $\mX \in \cC_{\tr}(\alpha')$. Second, when $\alpha' > d (\tr \mA)^{-1}$, take $\mX = \mA$ to see that $\mX \in \cC(\mA) $ but $\mX \notin \cC_{\tr}(\alpha')$.
\end{proof}

\begin{proof}[Proof of Statement 5] Complete the orthonormal basis $\{\vv_k\}_{1\le k \le d}$ where $\vv_1= \vv$ and let $\mA = \alpha^{-1}\vv \vv\trsp + \sum_{k=2}^d \delta \vv_k \vv_k\trsp$ for $\delta = {\frac{d-1}{\alpha' - \alpha}}$.
\end{proof}

\begin{proof}[Proof of Statement 6] Let $\mA$, $\alpha'$, and $\vv$ be arbitrary. Complete the orthonormal basis as in the previous proof. To find an $\mX \in \cC_{\vv}(\alpha')$ such that $\mX \notin \cC(\mA)$, take $\mX  = {\alpha'}^{-1}\vv \vv\trsp + \sum_{k=2}^d L \vv_k \vv_k\trsp$ for some sufficiently large $L$.
    
\end{proof}

\subsection{Proof of Lemma~\ref{lem:howardlieblemma}}\label{sec:pfhwl}
\begin{proof}
    Due to the monotonicity of $\log$ ({Fact}~\ref{fct:opmonofuns}), the condition \eqref{eqn:howard-general} implies
    \begin{equation}\label{eqn:howard-general-log}
    \log \Exp (\exp( \mZ_n -  \mC_n) | \cF_{n-1}) \matle \mC_n'.
\end{equation}
    Now recall Lieb's concavity theorem \citep{lieb1973convex}: for any $\mH \in \cS_d$, the map $\mX \mapsto \tr \exp(\mH + \log \mX)$ ($\cS_d^{++} \to (0,\infty)$) is concave. Therefore,
    \begin{align}
       \Exp (L_n|\cF_{n-1}) 
        =  \ &  \Exp \left( \tr \exp \left( \sum_{i=1}^{n-1} \mZ_i -  \sum_{i=1}^{n-1}( \mC_i + \mC_i' )  - \mC_n' + \log   \e^{ \mZ_n -  \mC_n  }  \right) \middle| \cF_{n-1} \right)
       \\
       & \text{(Jensen's inequality)} \nonumber
       \\
       \le \ &   \tr \exp \left( \sum_{i=1}^{n-1}\mZ_i -  \sum_{i=1}^{n-1} ( \mC_i + \mC_i' ) -\mC_n' + \log \Exp  (\e^{ \mZ_n - \mC_n   } |\cF_{n-1}) \right) 
       \\
       & \text{(by \eqref{eqn:howard-general-log} and {Fact}~\ref{fct:tracemono})} \nonumber
       \\
       \le \ &  \tr \exp \left( \sum_{i=1}^{n-1} \mZ_i -  \sum_{i=1}^{n-1}  ( \mC_i + \mC_i' )  - \mC_n' + \mC_n'  \right) = L_{n-1},
    \end{align}
    concluding the proof that $\{ L_n \}$ is a supermartingale.
    Finally, observe that
    \begin{align}
        L_n & =  \tr \exp \left( \sum_{i=1}^n \mZ_i -  \sum_{i=1}^n  ( \mC_i + \mC_i' )  \right)
        \\
        & \ge   \tr \exp \left( \sum_{i=1}^n \mZ_i -  \lambda_{\max} \left(\sum_{i=1}^n  ( \mC_i + \mC_i' ) \right) \mI \right)
        \\
        & \ge  \lambda_{\max}   \exp \left( \sum_{i=1}^n \mZ_i -  \lambda_{\max} \left(\sum_{i=1}^n  ( \mC_i + \mC_i' ) \right) \mI \right) 
        \\
        & = \exp \lambda_{\max} \left( \sum_{i=1}^n \mZ_i -  \lambda_{\max} \left(\sum_{i=1}^n  ( \mC_i + \mC_i' ) \right) \mI \right) 
        \\
        & =  \exp \left( \lambda_{\max}\left(\sum_{i=1}^n \mZ_i \right)-   \lambda_{\max}\left( \sum_{i=1}^n  ( \mC_i + \mC_i' ) \right)  \right),
    \end{align}
    concluding the proof.
\end{proof}


\section{Miscellaneous Technical Results}\label{sec:misc}

\subsection{Matrix MGF Bounds}\label{sec:mgf-logmgf}

In \cref{tab:mgf}, we state several results in the random concentration literature on controlling the matrix MGF $\mG(\gamma) = \Exp (\e^{\gamma(\mX_1 - \Exp \mX_1)} )$ for a random matrix $\mX_1$ satisfying certain conditions. If these conditions are met conditionally instead, bounds hold conditionally as well.

\begin{table}[!h]  \small
    \centering
    \begin{tabular}{c|c|c}
        Name & Condition & $\mG(\gamma)$  \\ \hline
         Uni-Gaussian & $\mX_1 - \mM = G_1\mC, G_1 \sim \mathcal{N}_{0,1}$ & $= \e^{\gamma^2 \mC^2/2}$ 
        \\
        Rademacher & $\mX_1 - \mM = R_1\mC, R_1 \sim \operatorname{rad}_{1/2}$ & $\matle \e^{\gamma^2 \mC^2/2}$ 
        \\ 
        Bennett & $\lambda_{\max}(\mX_1 - \mM) \le 1, \Var(\mX_1) = \mV$ & $\matle \e^{ (\e^\gamma - \gamma - 1) \mV}$ 
        \\
        Bernstein & $\Exp (\mX_1 - \mM)^p \matle \frac{p!}{2}\mC^2$ for $p=2,3,\dots$&  $\matle \e^{\frac{\gamma^2}{2(1-\gamma)} \mC^2/2}, \gamma \in (0,1) $
    \end{tabular}
    \caption{Some important matrix MGFs. Here $\mC, \mV$ are deterministic matrices, and $\mM = \Exp \mX_1$.
    }
    \label{tab:mgf}
\end{table}

\begin{proof}[References for \cref{tab:mgf}] The uni-Gaussian MGF bound is proved by \citet[Lemma 4.6.2]{tropp2015introduction} via a Taylor expansion argument.
    For Rademacher, the bound $\mG(\gamma) \matle \exp(\gamma^2 \mC^2 / 2)$ is proved by \citet[Lemma 4.6.3]{tropp2015introduction} via a scalar inequality similar to the Hoeffding's inequality plus the transfer rule (see \cref{sec:pf-tp}).  Bennett  \citep[Lemma 6.7]{tropp2012user} and Bernstein \citep[Lemma 6.8]{tropp2012user}
 follow from a similar reasoning applied to the respective series expansion. 
\end{proof}

\subsection{A Matrix Doob's $L^p$ Inequality}\label{sec:further}

Applying an operator-monotone and concave function to a matrix supermartingale, we get another matrix supermartingale. In particular,
\begin{lemma}
    Let $\{ \mM_n \}$ be an $\cS_d^+$-valued supermartingale on $\{ \cF_n \}$ and $p\in[1,\infty)$. Then, $\{ \mM_n^{1/p} \}$ and $\{ \log \mM_n \}$ are also supermartingales.
\end{lemma}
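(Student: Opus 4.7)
The plan is to derive both claims from a single general principle: if $f$ is both operator monotone and operator concave on its domain, then applying $f$ entrywise (in the matrix-function sense) to a matrix supermartingale yields another matrix supermartingale. Once this is set up, both statements are immediate from \cref{fct:opmonofuns}, which asserts that $\mX \mapsto \mX^{1/p}$ (for $p \in [1,\infty)$, so $1/p \in (0,1]$) and $\mX \mapsto \log \mX$ are operator monotone and operator concave on $\cS_d^+$ and $\cS_d^{++}$ respectively.

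The argument proceeds in three steps. First, I would establish an operator Jensen's inequality for conditional expectations: for an $\cS_d^+$-valued integrable random matrix $\mX$ and any sub-$\sigma$-algebra $\cG$,
\begin{equation}
  \Exp(f(\mX) \mid \cG) \matle f(\Exp(\mX \mid \cG)).
\end{equation}
This upgrades the finite-convex-combination definition of operator concavity used in the paper to arbitrary probability kernels, in exact parallel with how \cref{fct:tracejen} (trace Jensen) extends the scalar Jensen inequality; rigorously one argues by simple-function approximation applied to a regular conditional distribution, or simply cites the Hansen--Pedersen operator Jensen inequality. Second, I would invoke the supermartingale hypothesis $\Exp(\mM_n \mid \cF_{n-1}) \matle \mM_{n-1}$ together with operator monotonicity of $f$ to conclude $f(\Exp(\mM_n \mid \cF_{n-1})) \matle f(\mM_{n-1})$. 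Third, chaining the two displays yields
\begin{equation}
  \Exp(f(\mM_n) \mid \cF_{n-1}) \matle f(\Exp(\mM_n \mid \cF_{n-1})) \matle f(\mM_{n-1}),
\end{equation}
which is exactly the supermartingale property for $\{f(\mM_n)\}$. Setting $f(x) = x^{1/p}$ and $f(x) = \log x$ in turn gives the two claims.

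The main obstacle is the rigorous justification of operator Jensen for conditional expectations, which is not stated among the recorded preliminary facts. It is a standard but nontrivial result, and the cleanest route is a citation of Hansen--Pedersen; a self-contained proof would proceed by approximating $\Exp(\,\cdot\mid\cG)$ by finite convex combinations (weighted averages over a partition refining $\cG$), applying operator concavity to each such combination, and passing to the limit using the topological closedness of $\cS_d^+$ that the paper already exploits. A minor secondary caveat concerns $\log$: since it is defined only on $\cS_d^{++}$, strictly speaking one must either assume $\mM_n \in \cS_d^{++}$ almost surely, or replace $\mM_n$ by $\mM_n + \varepsilon \mI$ (still a supermartingale), apply the result, and send $\varepsilon \downarrow 0$ using continuity of $\log$ on $\cS_d^{++}$ together with dominated convergence in the Loewner sense on the event $\{\mM_n \matg 0\}$.
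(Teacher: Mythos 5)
Your proof follows essentially the same route as the paper's: chain the conditional operator Jensen inequality (from operator concavity of $\mX \mapsto \mX^{1/p}$ and $\log$) with operator monotonicity applied to the supermartingale inequality $\Exp(\mM_n \mid \cF_{n-1}) \matle \mM_{n-1}$, which is exactly the paper's one-display argument. Your additional care about rigorously justifying conditional operator Jensen (Hansen--Pedersen) and about the domain of $\log$ on $\cS_d^+$ addresses points the paper leaves implicit, but the underlying approach is identical.
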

\begin{proof}
    The function $\mX \mapsto \mX^{1/p}$ is operator-monotone and concave. So
    \begin{equation}
        \Exp( \mM_n^{1/p} | \cF_{n-1} ) \matle \Exp ( \mM_n | \cF_{n-1}) ^{1/p} \matle \mM_{n-1}^{1/p}.
    \end{equation}
    The same proof applies for the logarithm as well.
\end{proof}

{Fact}~\ref{fct:tracemono} and {Fact}~\ref{fct:tracejen}, on the other hand, convert a matrix-valued (super/sub)martingale to a scalar-valued one by trace scalarization. For example, denoting by $\| \mX \|_p$ the Schatten $p$-norm of $\mX \in \cS_d$; that is, $\| \mX \|_p = (\tr ((\abs \mX)^p))^{1/p}$.
\begin{lemma}
    Let $\{ \mM_n \}$ be an $\cS_d^+$-valued submartingale or $\cS_d$-valued martingale, and $p\ge 1$. Then, $\{ \| \mM_n\|^p_p \}$ is a submartingale.
\end{lemma}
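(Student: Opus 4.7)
The plan is to combine \cref{fct:tracemono} (trace monotonicity) and \cref{fct:tracejen} (trace Jensen) with the elementary fact that $f(x)=|x|^p$ is convex on $\mathbb{R}$ and monotone increasing on $[0,\infty)$ for $p\ge 1$. Both assertions reduce to establishing the chain
\begin{equation*}
    \tr(\abs \mM_{n-1})^p \;\le\; \tr\bigl( \abs \Exp(\mM_n\mid\cF_{n-1}) \bigr)^p \;\le\; \Exp\bigl( \tr(\abs \mM_n)^p \bigm| \cF_{n-1} \bigr),
\end{equation*}
where the first inequality comes from the (sub)martingale property plus monotonicity and the second comes from convexity.

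First I treat the martingale case, where $\mM_n$ takes values in $\cS_d$. Here $\Exp(\mM_n\mid\cF_{n-1}) = \mM_{n-1}$ on the nose, so the first inequality in the display collapses to an equality. The second inequality is a conditional version of Jensen's inequality applied to the functional $\tr \circ f : \cS_d \to \mathbb{R}$ induced by $f(x)=|x|^p$: this $f$ is convex on $\mathbb{R}$, hence by \cref{fct:tracejen} the composite $\tr \circ f$ is a convex functional on $\cS_d$, and by the functional-calculus convention the matrix extension of $f$ sends $\mX$ to $(\abs \mX)^p$, so that $\tr \circ f (\mX) = \tr (\abs \mX)^p$.

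For the submartingale case, $\mM_n \in \cS_d^+$ so $\abs \mM_n = \mM_n$ and the quantity in question is simply $\tr \mM_n^p$. Now $f(x)=x^p$ on $[0,\infty)$ is both monotone increasing and convex for $p\ge 1$. The submartingale property $\mM_{n-1} \matle \Exp(\mM_n\mid\cF_{n-1})$ combined with \cref{fct:tracemono} (applied to $f$) gives $\tr \mM_{n-1}^p \le \tr(\Exp(\mM_n\mid\cF_{n-1}))^p$, and then \cref{fct:tracejen} supplies $\tr(\Exp(\mM_n\mid\cF_{n-1}))^p \le \Exp(\tr \mM_n^p \mid \cF_{n-1})$, completing the chain.

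I do not anticipate a genuine obstacle: \cref{fct:tracemono,fct:tracejen} package exactly the monotonicity and convexity needed, and the only mild care required is the passage from Jensen's inequality for finite convex combinations (as stated in \cref{fct:tracejen}) to its conditional expectation form, which is standard once $\tr \circ f$ is known to be continuous and convex on the relevant matrix cone. Integrability of $\tr(\abs \mM_n)^p$ should also be assumed implicitly (otherwise the resulting process may be extended-real valued), but this is the usual $L^p$-integrability assumption one makes to speak of the $p$-th power submartingale.
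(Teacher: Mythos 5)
Your proposal is correct and follows essentially the same route as the paper's own proof: the same two-step chain, with \cref{fct:tracejen} (in its conditional form) supplying $\Exp(\tr(\abs\mM_n)^p\mid\cF_{n-1})\ge \tr(\abs\Exp(\mM_n\mid\cF_{n-1}))^p$ and \cref{fct:tracemono} applied to $x\mapsto|x|^p$ on $x\ge 0$ handling the submartingale case, while the martingale case collapses the first inequality to an equality. Your added remarks on passing to conditional Jensen and on implicit $L^p$ integrability are reasonable clarifications but do not change the argument.
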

\begin{proof} Using {Fact}~\ref{fct:tracejen},
     \begin{equation}
        \Exp( \tr (\abs \mM_n)^p | \cF_{n-1} ) = \tr  \Exp ( (\abs \mM_n)^p | \cF_{n-1} )   \ge \tr (\abs \Exp (\mM_n | \cF_{n-1})) ^{p}.
    \end{equation}
    If $\{ \mM_n \}$ is a martingale, the right hand side above is $ \tr (\abs \mM_{n-1} ) ^{p}$. If $\mM_n \matge 0$ is a submartingale, then we have
     by {Fact}~\ref{fct:tracemono},
    \begin{equation}
        \tr (\abs \Exp (\mM_n | \cF_{n-1})) ^{p}  \ge   \tr (\abs \mM_{n-1} ) ^{p},  
    \end{equation}
    since the function $x \mapsto |x|^p$ is monotone on $x \ge 0$. 
\end{proof}

In particular, when $p=1$ in above, the nuclear norm process $\{\| \mM_n \|_1\}$ is a submartingale. We now have the following generalization of Doob's $L^p$ maximal inequality (e.g.\ Theorem 11.2 in \cite{klenke2013probability}).
 
\begin{proposition}[Doob-type Nuclear $L^p$ Inequality]
    Let $\{ \mM_n \}$ be an $\cS_d^+$-valued submartingale or $\cS_d$-valued martingale, and $p > 1$. Then,
    \begin{equation}\scriptsize
   \Exp\left(\sup_{0\le n \le N}  \| \mM_n \|^p \right) \le \Exp \left(\sup_{0\le n \le N}   \| \mM_n \|_1^p \right) \le \left( \frac{p}{p-1} \right)^p \Exp \|\mM_N\|^p_1 =  \left( \frac{p}{p-1} \right)^p \Exp  (\|\mM_N\| \cdot \er(\mM_N))^p.
    \end{equation}
\end{proposition}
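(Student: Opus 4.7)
My plan is to peel the three inequalities apart and handle each by reducing to a purely scalar statement, with the middle one being the only substantive step (an invocation of the classical scalar Doob's $L^p$ maximal inequality on a carefully chosen real-valued submartingale). The outer two inequalities are pointwise consequences of the trace--spectral comparisons $\lambda_{\max}(\abs\mM) \le \tr\abs\mM \le d\,\lambda_{\max}(\abs\mM)$ on $\cS_d^+$.

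For the first inequality, I would note that $\abs\mM_n \in \cS_d^+$, so $\|\mM_n\|_{\op} = \lambda_{\max}(\abs\mM_n) \le \tr\abs\mM_n$ holds almost surely and for every $n$. Taking $\sup_{0\le n\le N}$, raising to the $p$-th power (monotone on $[0,\infty)$), and then expectations yields the claim. For the third inequality, the reverse comparison $\tr\abs\mM_N \le d\,\lambda_{\max}(\abs\mM_N) = d\,\|\mM_N\|_{\op}$ holds pointwise; multiplying by the constant $d^p$ after taking the $p$-th power and expectation delivers the bound.

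The middle inequality is the heart of the matter. The key input is the lemma proved immediately above the proposition, which, specialized to $p=1$, shows that $\{\tr\abs\mM_n\}$ is a nonnegative scalar submartingale (in the martingale case this uses \cref{fct:tracejen} applied to the convex function $x \mapsto |x|$; in the $\cS_d^+$-valued submartingale case it is immediate from monotonicity of the trace). With this scalar submartingale in hand, the classical scalar Doob's $L^p$ maximal inequality (e.g.\ Theorem 11.2 of \citet{klenke2013probability}) yields
\begin{equation}
\Exp\left(\sup_{0\le n\le N}(\tr\abs\mM_n)^p\right) \le \left(\frac{p}{p-1}\right)^p \Exp\,(\tr\abs\mM_N)^p,
\end{equation}
which is precisely the middle inequality.

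The only mild subtlety, and what I would flag as the main thing to double-check, is the submartingale status of $\{\tr\abs\mM_n\}$ when $\{\mM_n\}$ is an $\cS_d$-valued martingale (rather than an $\cS_d^+$-valued submartingale): here $\abs\mM_n$ is not a matrix martingale, but the scalar process $\tr\abs\mM_n = \tr f(\mM_n)$ with $f(x)=|x|$ is a scalar submartingale by \cref{fct:tracejen} together with the tower property, which is exactly what the cited lemma records. Beyond this, no new probabilistic machinery is needed: everything else is the two sandwiching spectral-trace inequalities and one application of the standard scalar $L^p$ maximal inequality.
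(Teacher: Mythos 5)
Your proposal is correct and follows essentially the same route as the paper: the outer inequalities are the pointwise spectral--trace comparisons $\lambda_{\max}(\abs\mM)\le\tr\abs\mM\le d\,\lambda_{\max}(\abs\mM)$, and the middle one is the classical scalar Doob $L^p$ maximal inequality applied to the nonnegative submartingale $\{\tr\abs\mM_n\}$, whose submartingale property is exactly what the preceding lemma (via the conditional form of trace Jensen in the martingale case) supplies. No gaps.
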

\begin{proof}
The second inequality is directly from applying Doob's $L^p$ inequality on the nonnegative submartingale $\{ \| \mM_n\|_1 \}$. The first $\le$ and the final $=$ follow from straightward relations between the spectral and nuclear norms, and the effective rank.
\end{proof}


\subsection{Non-symmetric, Non-square Matrices}
\label{sec:dilation}

We finally remark on the extension of our methods to non-symmetric and possibly non-square matrices. Denote by $\cM_{d_1\times d_2}(\mathbb R)$ the space of all $d_1\times d_2$ real-valued matrices, and we consider the random observations $\mX_1, \mX_2,\dots$ now as taking values in $\cM_{d_1\times d_2}(\mathbb R)$. We can employ the classical idea of \emph{dilations} (see e.g.\ \cite[Section 2.1.16]{tropp2015introduction}).
Consider the linear map
\begin{equation}
    D: \cM_{d_1\times d_2}(\mathbb R) \to \cS_{d_1+d_2}, \quad D(\mX) = \left( \begin{matrix}
        0 & \mX \\ \mX\trsp & 0
    \end{matrix} \right),
\end{equation}
and we can apply our previously developed bounds or tests on $\{D(\mX_n)\}$, which shall in turn provide concentration inequalities for $\{\mX_n\}$ themselves: either via the relation $\| D(\mX) \| = \| \mX \|$ with the linearity of $D$ to obtain spectral bounds for $\mX_1,\mX_2,\dots$, or one may directly use the resulting (scalar or matrix) e-process as the test process.

In \cref{tab:dilation}, we see that quadratic assumptions on the possibly non-square random matrix $\mX_1$ translate directly to those on the symmetric random matrix $D(\mX_1)$. Conditional distribution assumptions are translated similarly.

\begin{table}[!h]
    \centering
    \begin{tabular}{c|c}
     Assumption on $\mX_1$ &  Assumption on $D(\mX_1)$    \\ \hline \hline
       $\mX_1\trsp \mX_1 \matle \mB_1$ and $\mX_1 \mX_1\trsp \matle \mB_2$  & $D^2(\mX_1) \matle  \left( \begin{matrix}
        \mB_1 &  \\  & \mB_2
    \end{matrix} \right)$ and thus \\ &   $ \left( \begin{matrix}
        -\mB_1^{1/2} &  \\  & -\mB_2^{1/2}
    \end{matrix} \right) \matle D(\mX_1) \matle  \left( \begin{matrix}
        \mB_1^{1/2} &  \\   & \mB_2^{1/2}
    \end{matrix} \right)$   \\ \hline
    $\| \mX_1 \| \le b$ & $\|D(\mX_1)\| \le b$
    \\ \hline
        $\Cov (\mX_1 \trsp) \matle \mV_1$ and $\Cov (\mX_1)  \matle \mV_2$  & $\Var (D(\mX_1)) \matle  \left( \begin{matrix}
        \mV_1 &  \\  & \mV_2
    \end{matrix} \right)$  \\ 
    \end{tabular}
    \caption{Distribution assumptions on $\{\mX_n\}$ imply those on $\{D(\mX_n)\}$. Here, the covariance of a random matrix is defined as $\Cov(\mX) = \Exp   (\mX - \Exp \mX)  (\mX - \Exp \mX)\trsp$.}
    \label{tab:dilation}
\end{table}

For example, the self-normalized matrix supermartingale  (Example~\ref{ex:sn}) can be generalized as follows.

\begin{example}
      Let $\{\mX_n\}$ be $\cM_{d_1\times d_2}(\mathbb R)$-valued random matrices adapted to $\{ 
\cF_n \}$ with  conditional means $\Exp (\mX_n | \cF_{n-1}) = \mM_n$ and conditional covariance upper bounds \newline $\Cov( \mX_n\trsp | \cF_{n-1} ) \matle \mV_{1n} $,  $\Cov( \mX_n | \cF_{n-1} ) \matle \mV_{2n} $ that are predictable. For any predictable positive scalars $\{ \gamma_n \}$,
let
\begin{multline} \scriptsize
    \mE_n =  \left( \begin{matrix}
       - \gamma_n^2 (\mX_n - \mM_n)(\mX_n - \mM_n)\trsp/6   & \gamma_n (\mX_n - \mM_n)  \\  \gamma_n (\mX_n - \mM_n)\trsp &   - \gamma_n^2 (\mX_n - \mM_n)\trsp(\mX_n - \mM_n)/6 
    \end{matrix} \right), 
    \; \mF_n =  -\frac{\gamma_n^2}{3} \left( \begin{matrix}
        \mV_{1n}  &  \\  & \mV_{2n} 
    \end{matrix} \right),
\end{multline}
the process
 \begin{equation}
        \mY_n = \e^{\mF_{1}/2} \e^{\mE_{1}/2} \e^{\mF_{2}/2} \e^{\mE_{2}/2}\dots  \e^{\mF_{n}/2} \e^{\mE_{n}/2}  \e^{\mE_n/2}  \e^{\mF_{n}/2} \dots \e^{\mE_{2}/2}  \e^{\mF_{2}/2} \e^{\mE_{1}/2}  \e^{\mF_{1}/2}
    \end{equation}
    is an $\cS_{d_1+d_2}^{+}$-valued supermartingale.
\end{example}

\section{Conclusion}

In this paper, we study positive semidefinite matrix supermartingales and backward submartingales. In particular, we present several maximal inequalities that generalize existing results in the well-known scalar case. We demonstrate the statistical applications of these results by noting that for data and hypotheses that inherently take matrix values, our ``matrix-native'' approach is beneficial and can outperform previous methods that often require ``scalarization''.

In particular, sequential testing with matrix-valued test supermartingales and e-processes allows a freely chosen rejection threshold matrix $\mA$ subject to the constraint $\tr(\mA^{-1}) = \alpha$, one that can be chosen according to the prior knowledge to boost power without sacrificing the type 1 error guarantee. On the other hand, taking the threshold matrix $\mA$ to be a multiple of the identity matrix, a degenerate choice that it appears, can still lead to a more powerful test in the large sample regime compared to scalarization. This is an empirical finding of our paper that calls for further understanding.

Our study shall lead to more powerful and flexible sequential hypothesis testing procedures for matrix inference problems including covariance and graph testing.

\subsubsection*{Acknowledgement}

The authors thank Joel A.\ Tropp, Lester Mackey, Tudor Manole, and Reihaneh Malekian for suggestions and discussions, and acknowledge support from NSF grant  DMS-2310718.

\bibliography{urmat}




\end{document}